\documentclass[12pt,a4paper]{article}
\usepackage[a4paper,top=2.5cm,bottom=2.5cm,left=2.5cm,right=2.5cm]{geometry}
\usepackage{amsmath}
 \usepackage{graphicx}
\usepackage{amsthm}
\usepackage{color}
\usepackage{fancyhdr}
\usepackage{pdfpages}
\usepackage{txfonts}
\usepackage{amssymb}
\usepackage{paralist}
\usepackage{extarrows}
\usepackage{setspace}
\usepackage{braket} 
\usepackage[colorlinks=true,citecolor=blue,linkcolor=blue,anchorcolor=blue,urlcolor=blue]{hyperref}
\allowdisplaybreaks
\numberwithin{equation}{section}
\newtheorem{theorem}{Theorem}[section]
\newtheorem{lemma}{Lemma} [section]

\newtheorem{definition}[theorem]{Definition}

\allowdisplaybreaks
\begin{document}
\title{On the Yamabe flow in a bounded domain
 \footnote{This study was supported by the Key Program of the National Natural Science Foundation of China (Grant No. 12231016.)
 and the Program of the National Natural Science Foundation of China (Grant No. 12571117.)}}
\author{Fei Fang  \\  \footnotesize  \emph{School of Mathematics and Statistics,%
Beijing Technology and Business University, Beijing, 100048, China}
\\Zhong Tan\footnote{Corresponding author.  Email:  tanloy2026@126.com} \\  \footnotesize  \emph{School of Mathematical Science, Xiamen University, Xiamen, 361005, China}}
\maketitle
\noindent \textbf{\textbf{Abstract:}}
This paper investigates the dynamical behaviors of solutions to the Yamabe flow via the modified potential well method.
 We first establish the local existence and regularity of weak solutions for the flow.
 Several new results concerning global existence and blowup are obtained by classifying initial data into stable
 and unstable sets. Specifically, solutions with initial data in the stable set exist globally and extinguish in
  finite time, whereas those originating from unstable initial data blow up in infinite time.
   For certain high-energy initial data, we show that the solution decays to zero as time tends
   to infinity and undergoes finite-time blowup.
  In addition, we analyze Palais-Smale sequences to reveal the intrinsic relationship
  between the long-time asymptotic behavior of solutions and steady states. Finally, we derive the
  Pohozaev identity for the equation and prove the corresponding nonexistence theorem.

\noindent  \textbf{Keywords:} Yamabe flow, potential well method,  extinction phenomenon.

\noindent \textbf{Mathematics Subject Classification:} 35K05, 35K67, 35J15, 35J20

\section{Introduction}
Let $\Omega$ be a smooth bounded domain in $\mathbb{R}^N, N \geq 3$. We consider the following Yamabe flow
\begin{equation}\label{e01}
 \begin{cases}
& \dfrac{\partial}{\partial t} u^{p}=\Delta u+u^{p}, \  \text { in } \ \Omega \times(0, T), \\
&u(x,0)=u_0(x), \ \text { in } \  \Omega,\\
&  u=0, \ \text { on } \ \partial \Omega \times(0, T),
\end{cases}
\end{equation}
where $p=2^{\ast}-1=\frac{N+2}{N-2}$.
Problem \eqref{e01} possesses profound geometric and physical backgrounds.
Geometrically, consider the $N$-dimensional sphere $\mathbb{S}^N$ equipped with the
standard Riemannian metric $g_{\mathbb{S}^N}$. If the conformal metric evolves in the form
\[
g=v^{p-1}(\cdot, t) g_{\mathbb{S}^N},
\]
the corresponding Yamabe flow equation reads
\begin{equation}\label{ey02}
\left(v^{p}\right)_t=\Delta_{\mathbb{S}^N} v-c_N v, \quad c_N=\frac{N(N-2)}{4}.
\end{equation}
Via stereographic projection and cylindrical coordinate transformation, problem \eqref{e01} is equivalent to \eqref{ey02} in $\mathbb{R}^N$.
It has been established in \cite{rs9} and  \cite{rs42}  that the Yamabe flow \eqref{ey02} admits a
global solution converging exponentially to a steady state. For more research advances on the Yamabe flow,
readers may refer to \cite{rs8}, \cite{rs9}, \cite{rs10}, \cite{rs12}, \cite{rs13}, \cite{rs15}, \cite{rs16}, \cite{rs17}, \cite{rs40} and \cite{rs42}.

From a physical perspective, problem \eqref{e01} is closely related to the well-known fast diffusion equation formulated as
\begin{equation}\label{ey03}
\left\{
\begin{array}{l}
\dfrac{\partial w}{\partial \tau}=\Delta w^m \quad \text{in } \Omega \times(0, \infty), \\
w=0 \quad \text{on } \partial \Omega \times(0, \infty), \\
w(\cdot, 0)=w_0 \quad \text{in } \bar{\Omega},
\end{array}
\right.
\end{equation}
where $0<m<1$. Under the condition $mp=1$, we introduce the transformation
\[
u(x, t)=\left.(T-\tau)^{-m /(1-m)} w(x, \tau)^m\right|_{\tau=T\left(1-e^{-t}\right)}.
\]
Then the problem \eqref{ey03} can be converted into Equation \eqref{e01}. The problem \eqref{ey03} is a
typical nonlinear singular parabolic equation, which differs essentially from the porous medium equation corresponding to the case $m>1$.
problem \eqref{ey03} can be applied to describe various physical processes, such as the transport of
 high-temperature plasmas, gas migration in highly permeable porous media, and nonlinear diffusion of solutes in solutions.

A prominent property of solutions to \eqref{ey03} is the finite extinction phenomenon: the solution vanishes
identically over the entire domain after a finite time. This feature is in sharp contrast to the linear heat equation,
whose solution only decays asymptotically to zero as time tends to infinity.

Over the past decades, the asymptotic behavior of solutions near the extinction time has been a major research focus
in this field. For further relevant results, we refer the readers to
\cite{rs2}, \cite{rs3}, \cite{rs4}, \cite{rs6}, \cite{rs7}, \cite{rs13},
\cite{rs14}, \cite{rs26}, \cite{rs27}, \cite{rs28}, \cite{rs31}, \cite{rs35} and \cite{rs41}.

Payne and Sattinger \cite{Sattinger1} originally put forward the potential well method in the 1970s for solving
semilinear hyperbolic initial-boundary value problems. Later, Y.C. Liu et al. \cite{Liu1,Liu2} constructed a class of
parameter-dependent functionals and parameterized potential well sets, which greatly extended and optimized the original method.
Nowadays, the potential well method has evolved into a powerful approach to investigate the long-time dynamics
of solutions to evolution equations. Both the original method and its variants have been extensively applied to
 various equations and related problems, including semilinear parabolic equations \cite{tan},
 wave equations \cite{Vi}, fourth-order equations \cite{Han}, pseudoparabolic equations \cite{Xu},
  the heat flows of Dirichlet-to-Neumann operators \cite{Fang2},
  as well as heat flow problems associated with H-systems \cite{Fang1}.
At present, such techniques have emerged as one of the mainstream approaches for studying nonlinear
 problems involving critical and subcritical exponents.
 The main conclusions drawn in existing studies are summarized as follows:
\begin{description}
  \item[(1).]  If the initial value lies in the stable set, the solution exists globally and tends to zero asymptotically;
  \item[(2).] If the initial value belongs to the unstable set, the solution blows up in finite time.
\end{description}
Unlike previous works, for problem \eqref{e01}, we employ   the modified potential well method and obtain the following new results:
\begin{description}
  \item[(1).]  For initial values in the stable set, the solution exists globally and extinguishes in finite time;
  \item[(2).] For initial values in the unstable set, the solution blows up in infinite time;
  \item[(3).] For certain high-energy initial values, we prove two distinct phenomena: the solution either blows up in finite time or
            converges asymptotically over infinite time.
\end{description}
In addition, we establish the local existence and regularity of solutions. By virtue of the analysis of Palais-Smale sequences,
we also explore the relationship between the asymptotic behavior of solutions and steady states. Finally, we derive the
Pohozaev identity for the equation under consideration and prove the corresponding nonexistence theorem.

We  denote $\|\cdot\|_{L^q(\Omega)}$ by $\|\cdot\|_q$ for $1 \leq q \leq \infty$.
Let $X:=H_0^1(\Omega)$ with the norm
$$\|\nabla u\|_{L^2}=\|\nabla u\|_2=\left(\int_{\Omega} |\nabla u|^2dx\right)^{\frac{1}{2}}.$$
For $u \in X$, we set
$$
\begin{aligned}
E(u)&=\frac{1}{2} \int_{\Omega}|\nabla u|^2dx-\frac{1}{p+1} \int_{\Omega}|u|^{p+1} dx\\
&:=\frac{1}{2}A(u)-\frac{1}{2^{\ast}}B(u),\\
D(u)&=A(u)-B(u).
\end{aligned}$$
The Nehari manifold is defined by
$$
\mathcal{N}=\left\{u \in X: D(u)=0, u \neq 0\right\}.
$$
Correspondingly, we define two unbounded subsets
$$
\begin{aligned}
& \mathcal{N}_{+}=\left\{u\in X: D(u)>0\right\}, \\
& \mathcal{N}_{-}=\left\{u \in X: D(u)<0\right\}.
\end{aligned}
$$
The potential well and its corresponding set are defined, respectively, as
$$
\begin{aligned}
W & =\left\{u \in X: D(u)>0, E(u)<d\right\} \cup\{0\}, \\
V & =\left\{u \in X: D(u)<0, E(u)<d\right\},
\end{aligned}
$$
where
$$
d=\min _{v \in X \backslash\{0\}} \max _{t \geqslant 0} E(t u)=\inf _{v \in \mathcal{N}} E(u)
$$
is the depth of the potential well $W$. From the Sobolev inequality, we easily get
$$d=\frac{1}{N} S^{\frac{N}{2}},$$
where $S$ is the Sobolev constant (see \eqref{e03}).

Now let us define the level set
$$
E^\alpha=\left\{u \in X: E(u)<\alpha\right\} .
$$
Furthermore, by the definition of $E(u), \mathcal{N}, E^\alpha$ and $d$, we easily know that
$$
\mathcal{N}^\alpha=\mathcal{N} \cap E^\alpha \equiv\left\{u \in \mathcal{N}:
A(u)<\frac{2 \alpha(p+1)}{p-1}\right\} \neq \varnothing \quad \text { for all } \alpha>d.
$$
We now define
\begin{equation*}
\lambda_\alpha=\inf \left\{\| u\|_{p+1}: u\in \mathcal{N}^\alpha\right\}, \Lambda_\alpha=\sup \left\{\|u\|_{p+1}: u \in \mathcal{N}^\alpha\right\} \text { for all } \alpha>d .
\end{equation*}
It is clear that $\lambda_\alpha$ is nonincreasing and $\Lambda_\alpha$ is nondecreasing with respect to $\alpha$.

Let us define the modified functional and Nehari manifold as follows:
\begin{equation}\begin{aligned}
  D_{\delta}(u)&=\delta A(u)-B(u),\\
\mathcal{N}_\delta & =\left\{u \in X: D_{\delta}(u)=0,\|\nabla u\|_{2} \neq 0\right\}, \\
d_\delta & =\inf _{u \in N_\delta} E(u).
\end{aligned}\end{equation}
Therefore, we can define
\begin{equation}\label{e03}
 S=\inf \left\{\left.\frac{A(u)}{\|u\|_{p+1}^2} \right.: u \in X\setminus\{0\}\right\}
\end{equation}
and
\begin{equation}\label{e03ss}
r(\delta) =\delta^{\frac{N-2}{4}} S^{\frac{N}{4}}.
\end{equation}
 We also introduce the following sets:
$$
\begin{aligned}
\mathcal{B} & =\left\{u_0 \in X: \text { the solution } u=u(t) \text { of (\ref{e01}) blows up in finite time }\right\}, \\
\mathcal{G} & =\left\{u_0 \in X: \text { the solution } u=u(t) \text { of (\ref{e01}) exists for all } t>0\right\}, \\
\mathcal{G}_0 & =\left\{u_0 \in \mathcal{G}: u(t) \rightarrow 0 \text { in } X \text { as } t \rightarrow \infty\right\} .
\end{aligned}
$$
Then we can define the modified potential wells and their corresponding sets as follows:
\begin{align}
W_\delta & =\left\{u \in X: D_{\delta}(u)>0, E(u)<d(\delta)\right\} \cup\{0\},\nonumber \\
V_\delta & =\left\{u \in X: D_{\delta}(u)<0, E(u)<d(\delta)\right\}, \nonumber \\
B_\delta & =\left\{u \in X: \sqrt{A(u)}<r(\delta)\right\}, \nonumber \\
B_\delta^c & =\left\{u \in X:\sqrt{A(u)}>r(\delta)\right\} .
\end{align}

\section{Preliminaries and main lemmas}
In this section, we first define weak solutions and the maximal existence time, then introduce relevant notations and lemmas.
Throughout this paper, $C$ denotes a constant which may vary from line to line.
\begin{definition}\label{wd}
(Weak solution). We say that a function $u=u(x, t)$ is a weak solution of problem (\ref{e01}) in $\Omega_T:=\Omega \times(0, T)$ if and only if
$$
u \in L^{\infty}\left(0, T ; X\right), \frac{\partial}{\partial t}\left(u^{\frac{p+1}{2}} \right) \in L^2\left(\Omega_T\right)=L^2\left(0, T ; L^2(\Omega)\right),
$$
and satisfies problem (\ref{e01}) in the distribution sense, that is,
$$\left((u^p)_t, w\right)+\left(\nabla u, \nabla w\right) =\left(|u|^{p},w\right), \text { for all } w \in X, t\in(0,T),$$
where $u(x, 0)=u_0(x) \in X$.
\end{definition}

\begin{definition}\label{df1}
   (Maximal existence time). Assume that $u(t)$ is a weak solution of problem (\ref{e01}). The maximal existence time $T$ of $u(t)$ is defined as follows:
   \begin{description}
     \item[(1)] If $u(t)$ exists for $0 \leqslant t<\infty$, then $T=+\infty$.
     \item[(2)]  If there is a $t_0 \in(0, \infty)$ such that $u(t)$ exists for $0 \leqslant t<t_0$, but doesn't exist at $t=t_0$, then $T=t_0$.

   \end{description}
\end{definition}

\begin{lemma}\label{l03}Let $v \in X$. We obtain
  \begin{description}
    \item[(1)] If $0<\sqrt{A(u)}<r(\delta)$, then $D_{ \delta}(u)>0$. In particular, if $0<\sqrt{A(u)}<r(1)$, then $D(u)>0$;
    \item[(2)] If $D_{ \delta}(u)<0$, then $\sqrt{A(u)}>r(\delta)$. In particular, if $D(u)<0$, then $\sqrt{A(u)}>r(1) ;$
    \item[(3)] If $D_{\delta}(u)=0$, then $\sqrt{A(u)} \geqslant r(\delta)$ or $A(u)=0$.
    In particular, if $D(u)=0$, then $\sqrt{A(u)} \geqslant r(1)$ or $A(u)=0$;
    \item[(4)] If $D_{\delta}(u)=0$ and $A(u) \neq 0$, then $E(u)>0$ for $0<\delta<\frac{p+1}{2}, E(u)=0$ for $\delta=\frac{p+1}{2}, E(u)<0$ for $\delta>\frac{p+1}{2}$.
  \end{description}
\end{lemma}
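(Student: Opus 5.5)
The whole lemma follows from the Sobolev inequality \eqref{e03}, $B(u)=\|u\|_{p+1}^{p+1}\le S^{-\frac{p+1}{2}}A(u)^{\frac{p+1}{2}}$, combined with the definition \eqref{e03ss} of $r(\delta)$. Everything reduces to comparing $\delta A(u)$ with $B(u)$ for $A(u)\neq 0$. The identity to establish first is
$$
\frac{B(u)}{A(u)}\le S^{-\frac{p+1}{2}}A(u)^{\frac{p-1}{2}}.
$$
Since $p=\frac{N+2}{N-2}$, we have $\frac{p-1}{2}=\frac{2}{N-2}$ and $\frac{p+1}{2}=\frac{N}{N-2}$. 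Therefore $S^{-\frac{p+1}{2}}A(u)^{\frac{p-1}{2}}<\delta$ holds exactly when $A(u)^{\frac{2}{N-2}}<\delta S^{\frac{N}{N-2}}$, that is, when $\sqrt{A(u)}<\delta^{\frac{N-2}{4}}S^{\frac{N}{4}}=r(\delta)$. This computation of the exponents is the only place requiring care; it is what makes $r(\delta)$ the threshold.

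For (1), if $0<\sqrt{A(u)}<r(\delta)$, the display gives $B(u)<\delta A(u)$, so $D_\delta(u)>0$. Part (2) is the contrapositive of (1). If $D_\delta(u)<0$, then $A(u)\neq0$, since otherwise $u=0$ and $D_\delta(u)=0$. So by (1), $\sqrt{A(u)}\ge r(\delta)$. Equality is excluded, because $\sqrt{A(u)}=r(\delta)$ combined with Sobolev gives $B(u)\le\delta A(u)$, i.e. $D_\delta(u)\ge0$. Part (3) likewise follows from (1): if $D_\delta(u)=0$ and $A(u)\neq0$, then $\sqrt{A(u)}<r(\delta)$ is impossible. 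The "in particular" statements are the case $\delta=1$.

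For (4), on $D_\delta(u)=0$ we substitute $B(u)=\delta A(u)$ into $E$ to get
$$
E(u)=\Big(\frac12-\frac{\delta}{p+1}\Big)A(u).
$$
Since $A(u)>0$, the sign of $E(u)$ is the sign of $\frac{p+1}{2}-\delta$, which gives the three cases.
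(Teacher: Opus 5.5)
Your proof is correct and follows essentially the same route as the paper: the Sobolev bound $B(u)\le S^{-\frac{p+1}{2}}A(u)^{\frac{p+1}{2}}$, together with the exponent computation, identifies $r(\delta)$ as the threshold, and substituting $B(u)=\delta A(u)$ into $E$ settles (4). For (2) and (3) you derive the results from (1) and check the cases $\sqrt{A(u)}=r(\delta)$ and $A(u)=0$ separately. This is a slightly more explicit version of the paper's direct factorization $D_\delta(u)\ge A(u)\bigl(\delta-S^{-\frac{p+1}{2}}A(u)^{\frac{2}{N-2}}\bigr)$.
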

\begin{proof}
 (1) Since $0<\sqrt{A(u)}<r(\delta)$, from  \eqref{e03} and \eqref{e03ss},   we obtain
$$\begin{aligned}
D_{\delta}(u) & =\delta A(u)-B(u) \\
& \geq \delta A(u)-\left(\frac{A(u)}{S}\right)^{\frac{p+1}{2}} \\
& \geq A(u)\left(\delta-\left(\frac{1}{S}\right)^{\frac{p+1}{2}}\left(A(u)\right)^{\frac{2}{N-2}}\right)>0.
\end{aligned}$$

(2) By the assumption $D_{\delta}(u)<0$ and \eqref{e03}, we have
 $$\begin{aligned}
0> D_{\delta}(u) & =\delta A(u)-B(u) \\
& \geq \delta A(u)-\left(\frac{A(u)}{S}\right)^{\frac{p+1}{2}} \\
& \geq A(u)\left(\delta-\left(\frac{1}{S}\right)^{\frac{p+1}{2}}\left(A(u)\right)^{\frac{2}{N-2}}\right).
\end{aligned}$$
Hence, $\sqrt{A(u)}>r(\delta)$.

(3) By the assumption $D_{ \delta}(u)=0$ and \eqref{e03}, we have

 $$\begin{aligned}
0= D_{\delta}(u) & =\delta A(u)-B(u) \\
& \geq \delta A(u)-\left(\frac{A(u)}{S}\right)^{\frac{p+1}{2}} \\
& \geq A(u)\left(\delta-\left(\frac{1}{S}\right)^{\frac{p+1}{2}}\left(A(u)\right)^{\frac{2}{N-2}}\right).
\end{aligned}$$
Hence, $\sqrt{A(u)}\geq r(\delta)$ or $u=0$.

(4) We easily know that

\begin{align}
 E(u) & =\frac{1}{2}A(u)-\frac{1}{p+1}B(u) \nonumber\\
 & =\left(\frac{1}{2}-\frac{\delta}{p+1}\right)A(u)+\frac{1}{p+1} (\delta A(u)-B(u))\nonumber\\
 & =\left(\frac{1}{2}-\frac{\delta}{p+1}\right)A(u)+\frac{1}{p+1} D_{\delta}(u) \nonumber\\
 & =\left(\frac{1}{2}-\frac{\delta}{p+1}\right)A(u) .
\end{align}
Then we can prove the conclusion.
\end{proof}

\begin{lemma}\label{l04}
\begin{description}
  \item[(1)]  $d(\delta) \geqslant a(\delta) r^2(\delta)$ for $a(\delta)=\frac{1}{2}-\frac{\delta}{p+1}, 0<\delta<\frac{p+1}{2}$;
  \item[(2)] $\lim _{\delta \rightarrow 0} d(\delta)=0, d\left(\frac{p+1}{2}\right)=0$ and $d(\delta)<0$ for $\delta>\frac{p+1}{2}$;
  \item[(3)] $d(\delta)$ is increasing on $0<\delta \leqslant 1$, decreasing on $1 \leqslant \delta \leqslant \frac{p+1}{2}$ and takes the maximum $d=d(1)$ at $\delta=1$.
\end{description}
\end{lemma}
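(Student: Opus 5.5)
The plan is to compute $d(\delta)$ \emph{explicitly}. Because the exponent $p+1=2^\ast$ is critical, the Sobolev quotient in \eqref{e03} is scale invariant, and this reduces all three parts to a one-variable calculus problem in $\delta$.

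\emph{Step 1 (lower bound, part (1)).} For $u\in\mathcal N_\delta$ we have $B(u)=\delta A(u)$. The identity in the proof of Lemma \ref{l03}(4) then gives $E(u)=a(\delta)A(u)$ with $a(\delta)=\frac12-\frac{\delta}{p+1}$. Lemma \ref{l03}(3) gives $A(u)\ge r^2(\delta)$, since $A(u)\neq0$ on $\mathcal N_\delta$. For $0<\delta<\frac{p+1}{2}$ we have $a(\delta)>0$, so $E(u)\ge a(\delta)r^2(\delta)$, and taking the infimum over $\mathcal N_\delta$ proves (1).

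\emph{Step 2 (the bound is sharp).} Fix $u\in X\setminus\{0\}$ and choose $t>0$ with $tu\in\mathcal N_\delta$, i.e.\ $t^{p-1}=\delta A(u)/B(u)$. Using $\frac{2}{p-1}=\frac{N-2}{2}$, a short computation gives
\[
A(tu)=\delta^{\frac{N-2}{2}}\left(\frac{A(u)}{\|u\|_{p+1}^2}\right)^{\frac N2}.
\]
Let $u$ run along a minimizing sequence for $S$ in \eqref{e03}. Then $\inf_{\mathcal N_\delta}A=\delta^{\frac{N-2}{2}}S^{\frac N2}=r^2(\delta)$. Hence
\[
d(\delta)=a(\delta)\,\delta^{\frac{N-2}{2}}S^{\frac N2}\qquad\text{for }0<\delta<\tfrac{p+1}{2}.
\]

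\emph{Step 3 (part (2)).}
\begin{itemize}
\item Since $N\ge3$, the explicit formula tends to $0$ as $\delta\to0$.
\item At $\delta=\frac{p+1}{2}$ we have $a=0$, so $E\equiv0$ on $\mathcal N_\delta$ and $d=0$.
\item For $\delta>\frac{p+1}{2}$ we have $a(\delta)<0$, so $E(u)=a(\delta)A(u)<0$ on $\mathcal N_\delta$ and $d(\delta)<0$. In fact $d(\delta)=-\infty$: the ratio $A(u)/\|u\|_{p+1}^2$ is unbounded above on $X$ (take highly oscillating functions), so by the formula in Step 2, $A$ is unbounded on $\mathcal N_\delta$.
\end{itemize}

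\emph{Step 4 (part (3)).} Differentiate $f(\delta)=\left(\frac12-\frac{\delta}{p+1}\right)\delta^{\frac{N-2}{2}}$ and use $p+1=\frac{2N}{N-2}$:
\[
f'(\delta)=\delta^{\frac{N-4}{2}}\cdot\frac{N-2}{4}\,(1-\delta).
\]
So $f$ increases on $(0,1]$, decreases on $[1,\frac{p+1}{2}]$, and has its maximum at $\delta=1$. There $d(1)=\frac1N S^{N/2}=d$, consistent with the introduction.

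The main obstacle is Step 2, i.e.\ establishing equality rather than just the inequality of (1). The infimum $S$ is not attained on a bounded domain, so one must work with a minimizing sequence (for instance cut-off Aubin–Talenti bubbles concentrating at an interior point). The key point is that the rescaled value $A(tu)$ depends only on the Sobolev quotient of $u$, so nonattainment does no harm.
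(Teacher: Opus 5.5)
Your argument is correct, but for parts (2) and (3) it takes a different route from the paper. Part (1) is proved exactly as in the paper, via Lemma \ref{l03}(3) together with $E=a(\delta)A$ on $\mathcal N_\delta$.

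\textbf{The paper's route.} For (2), the paper fixes a single $u$ and notes that $s(\delta)=(\delta A(u)/B(u))^{1/(p-1)}\to 0$, so that $d(\delta)\le E(s(\delta)u)\to 0$. It then reads off the sign of $d(\delta)$ for $\delta\ge\frac{p+1}{2}$ from $E=a(\delta)A$. For (3), it runs a Liu-type comparison: for $w\in\mathcal N_{\delta''}$, the rescaled $v=s(\delta')w\in\mathcal N_{\delta'}$ satisfies $E(v)<E(w)-\varepsilon(\delta',\delta'')$ with $\varepsilon$ independent of $w$. This uses $h'(s)=s(1-\delta''s^{p-1})A(w)$ and $A(w)\ge r^2(\delta'')$.

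\textbf{Your route.} You instead show that the bound in (1) is an identity, $d(\delta)=a(\delta)r^2(\delta)$ on $(0,\frac{p+1}{2}]$. This follows from $\mathcal N_\delta=\{t(u)u: u\neq 0\}$ and $A(t(u)u)=\delta^{\frac{N-2}{2}}\bigl(A(u)/\|u\|_{p+1}^2\bigr)^{\frac N2}$. After that, (2) and (3) reduce to the sign of $f'(\delta)$.

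\textbf{What each buys.} Your route gives more information. It proves the value $d=d(1)=\frac1N S^{\frac N2}$, which the introduction only asserts. It also shows that $d(\delta)=-\infty$, not merely negative, for $\delta>\frac{p+1}{2}$. The paper's comparison argument, by contrast, needs no closed form for $d(\delta)$, and it is the version that carries over to non-homogeneous nonlinearities.

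\textbf{One correction to your framing.} Step 2 uses neither criticality nor dilation invariance. It only uses that the quotient $A(u)/\|u\|_{p+1}^2$ is invariant under $u\mapsto tu$, and that $A(t(u)u)$ is an increasing function of that quotient. Let $S_p$ denote the best constant of the embedding $H_0^1(\Omega)\hookrightarrow L^{p+1}(\Omega)$. Then the same computation gives $d(\delta)=a(\delta)\,\delta^{\frac{2}{p-1}}S_p^{\frac{p+1}{p-1}}$, with derivative of the sign of $1-\delta$, for every $1<p\le\frac{N+2}{N-2}$. So nonattainment of $S$ is not really the obstacle you flag.
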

\begin{proof}
(1) If $u \in \mathcal{N}_\delta$, by Lemma \ref{l03} (3), then $\sqrt{A(u)} \geqslant r(\delta)$. Moreover, we can deduce
\begin{equation} \label{e05}
\begin{aligned}
E(u) & =\frac{1}{2}A(u)-\frac{1}{p+1}B(v) \\
& =\left(\frac{1}{2}-\frac{\delta}{p+1}\right)A(u)+\frac{1}{p+1} D_{ \delta}(u) \\
& =\left(\frac{1}{2}-\frac{\delta}{p+1}\right)A(u) \geq a(\delta) r^2(\delta).
\end{aligned}
\end{equation}
Hence, $d(\delta) \geqslant a(\delta) r^2(\delta)$.

(2)
 We easily know that
$$
E(s u)=\frac{s^2}{2}A(u)-\frac{s^{p+1}}{p+1}B(u) .
$$
Hence,
\begin{equation} \label{e04}
\lim _{s \rightarrow 0} E(s u)=0.
\end{equation}
And if we let $s u \in \mathcal{N}_\delta$, then $s u$ satisfies

$$
0=D_{\delta}(s u)=\delta s^2A(u)-s^{p+1}B(u) .
$$
Then, we obtain
\begin{equation}\label{e06}
s(\delta)=\left(\frac{\delta A(u)}{B(u)}\right)^{\frac{1}{p-1}},
\end{equation}
which yields
\begin{equation*}
\lim _{\delta \rightarrow 0} s(\delta)=0.
\end{equation*}
Now (\ref{e04}) implies that
\begin{equation*}
\lim _{\delta \rightarrow 0} E(s u)=\lim _{\lambda \rightarrow 0} E(s u)=0,
\end{equation*}
and
\begin{equation*}
\lim _{\delta \rightarrow 0} d(\delta)=0.
\end{equation*}
It is easy to see that from (\ref{e05})
$$
d\left(\frac{p+1}{2}\right)=0 \text { and } d(\delta)<0 \text { for } \delta>\frac{p+1}{2} .
$$

(3) We need to prove that for any $0<\delta^{\prime}<\delta^{\prime \prime}<1$
or $1<\delta^{\prime \prime}<\delta^{\prime}<\frac{p+1}{2}$ and for
any $w \in \mathcal{N}_{\delta^{\prime \prime}}$, there is a
 $v \in \mathcal{N}_{\delta^{\prime}}$ and a constant $\varepsilon\left(\delta^{\prime}, \delta^{\prime \prime}\right)$
 such that $E(v)<E(w)-\varepsilon\left(\delta^{\prime}, \delta^{\prime \prime}\right)$.
 Indeed, by  (\ref{e06}), we easily know that $D_{ \delta^{\prime \prime}}(s(\delta^{\prime \prime}) w)=0$
 and $s\left(\delta^{\prime \prime}\right)=1$. Let $h(s)=E(s w)$
  we have
$$
\begin{aligned}
\frac{\mathrm{d}}{\mathrm{d} s} h(s) & =sA(w)-s^pB(w) \\
& = sA(w)-\delta^{\prime\prime} s^p A(w)+s^pD_{\delta^{\prime\prime}}(w)\\
& = (s-\delta^{\prime\prime} s^p) A(w)+s^pD_{\delta^{\prime\prime}}(w)\\
& = s(1-\delta^{\prime\prime} s^{p-1}) A(w).
\end{aligned}
$$
Take $v=s\left(\delta^{\prime}\right) w$, then $v \in \mathcal{N}_{\delta^{\prime}}$.
For $0<\delta^{\prime}<\delta^{\prime \prime}<1$,  $s\in (s(\delta^{\prime}), s(\delta^{\prime\prime}))=(s(\delta^{\prime}), 1)$, we obtain
$$
\begin{aligned}
E(w)-E(v) & =h(1)-h\left(s\left(\delta^{\prime}\right)\right) \\
& >\left(1-\delta^{\prime \prime}\right) r^2\left(\delta^{\prime \prime}\right) s\left(\delta^{\prime}\right)\left(1-s\left(\delta^{\prime}\right)\right) \equiv \varepsilon\left(\delta^{\prime}, \delta^{\prime \prime}\right).
\end{aligned}
$$
For $1<\delta^{\prime \prime}<\delta^{\prime}<\frac{p+1}{2}$, we obtain
$$
\begin{aligned}
E(w)-E(v) & =h(1)-h\left(s\left(\delta^{\prime}\right)\right) \\
& >\left(\delta^{\prime \prime}-1\right) r^2\left(\delta^{\prime \prime}\right) s\left(\delta^{\prime \prime}\right)\left(s\left(\delta^{\prime}\right)-1\right) \equiv \varepsilon\left(\delta^{\prime}, \delta^{\prime \prime}\right).
\end{aligned}
$$
Hence, the proof is complete.
\end{proof}

\begin{lemma}\label{l06}
Let $u \in X$ and $0<\delta<\frac{p+1}{2}$. If $E(u) \leqslant d(\delta)$, then we have
 \begin{description}
   \item[(1)] If $D_{\delta}(u)>0$, then $A(u)<\frac{d(\delta)}{a(\delta)}$, where $a(\delta)=\frac{1}{2}-\frac{\delta}{p+1}$.
   In particular, if $E(u) \leqslant d$ and $D(u)>0$, then
\begin{equation*}
A(u)<\frac{2(p+1)}{p-1} d;
\end{equation*}
   \item[(2)] If $A(u)>\frac{d(\delta)}{a(\delta)}$, then $D_{ \delta}(u)<0$. In particular, if $E(u) \leqslant d$ and
\begin{equation*}
A(u)>\frac{2(p+1)}{p-1} d;
\end{equation*}
then $D(u)<0$.
   \item[(3)] If $D_{ \delta}(u)=0$, then $A(u) \leqslant \frac{d(\delta)}{a(\delta)}$. In particular, if $E(u) \leqslant d$ and $D(u)=0$, then
\begin{equation*}
A(u) \leqslant \frac{2(p+1)}{p-1} d.
\end{equation*}
\end{description}
\end{lemma}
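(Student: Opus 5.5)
The argument rests on one identity, already used in the proof of Lemma \ref{l03}(4) and in \eqref{e05}. For any $u\in X$ and any $\delta$,
\begin{equation*}
E(u)=a(\delta)A(u)+\frac{1}{p+1}D_\delta(u),\qquad a(\delta)=\frac12-\frac{\delta}{p+1}>0 \text{ for } 0<\delta<\frac{p+1}{2}.
\end{equation*}
With $E(u)\le d(\delta)$ this rearranges to
\begin{equation*}
a(\delta)A(u)=E(u)-\frac{1}{p+1}D_\delta(u)\le d(\delta)-\frac{1}{p+1}D_\delta(u).
\end{equation*}
Each of the three items then follows by reading off the sign of $D_\delta(u)$.

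For (1), if $D_\delta(u)>0$ the last term is strictly negative. Hence $a(\delta)A(u)<d(\delta)$, and dividing by $a(\delta)>0$ gives $A(u)<d(\delta)/a(\delta)$.

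For (3), if $D_\delta(u)=0$ the same inequality is no longer strict, and we get $A(u)\le d(\delta)/a(\delta)$.

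For (2), I will argue by contrapositive. If $D_\delta(u)\ge 0$, then (1) and (3) together give $A(u)\le d(\delta)/a(\delta)$. This contradicts $A(u)>d(\delta)/a(\delta)$, so $D_\delta(u)<0$.

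The ``in particular'' statements are the case $\delta=1$. There $D_1=D$, and $d(1)=d$ by Lemma \ref{l04}(3). Also $a(1)=\frac12-\frac1{p+1}=\frac{p-1}{2(p+1)}$, so $d/a(1)=\frac{2(p+1)}{p-1}d$, which gives the stated bounds.

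There is no real obstacle in this proof. The only point needing care is item (2): it does not follow from a direct inequality, but from combining (1) and (3) via the contrapositive. One must also use $a(\delta)>0$, which is exactly where the hypothesis $\delta<\frac{p+1}{2}$ enters.
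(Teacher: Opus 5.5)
Your proof is correct and follows the paper's argument: you use the identity $E(u)=a(\delta)A(u)+\frac{1}{p+1}D_\delta(u)$ with $a(\delta)>0$, read it off according to the sign of $D_\delta(u)$, and specialize to $\delta=1$ for the ``in particular'' claims. Your version is cleaner than the paper's display \eqref{e07}, which drops the $D_\delta$ term behind an equality sign; also, (2) follows directly, since $D_\delta(u)=(p+1)\bigl(E(u)-a(\delta)A(u)\bigr)\le(p+1)\bigl(d(\delta)-a(\delta)A(u)\bigr)<0$, so the contrapositive through (1) and (3) is not needed.
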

\begin{proof}
(1) For $0<\delta<\frac{p+1}{2}$, we see that
\begin{equation}\label{e07}
\begin{aligned}
E(u) & =\frac{1}{2}A(u)-\frac{1}{p+1}B(u) \\
& =\left(\frac{1}{2}-\frac{\delta}{p+1}\right)A(u)+\frac{1}{p+1} D_{\delta}(u) \\
& =a(\delta)A(u)\leq d(\delta).
\end{aligned}
\end{equation}
Therefore,
$$
A(u)<\frac{d(\delta)}{a(\delta)} .
$$
Finally, (2) and (3) follow from (\ref{e07}).
\end{proof}

\begin{lemma}\label{l07}
Let $u \in X$. We have
\begin{description}
  \item[(1)] 0 is away from both $\mathcal{N}$ and $\mathcal{N}_{-}$,
  i.e. $\operatorname{dist}(0, \mathcal{N})>0, \operatorname{dist}\left(0, \mathcal{N}_{-}\right)>0$;
  \item[(2)]  For any $\alpha>0$, the set $E^\alpha \cap \mathcal{N}_{+}$ is bounded in $X$.
\end{description}
\end{lemma}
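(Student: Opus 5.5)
Both parts follow from the Sobolev inequality \eqref{e03}, via Lemma \ref{l03} and the identity $E(u)=\frac{1}{2}A(u)-\frac{1}{p+1}B(u)$.

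For part (1), take $u\in\mathcal{N}$. Then $D(u)=0$ and $u\neq 0$, so $A(u)\neq 0$. Lemma \ref{l03}(3) with $\delta=1$ gives $\sqrt{A(u)}\ge r(1)=S^{N/4}$. Similarly, if $u\in\mathcal{N}_-$, then $D(u)<0$ and Lemma \ref{l03}(2) gives $\sqrt{A(u)}>r(1)$. Since the norm on $X$ is $\|\nabla u\|_2=\sqrt{A(u)}$, we get
\begin{equation*}
\operatorname{dist}(0,\mathcal{N})\ge r(1)>0 \quad\text{and}\quad \operatorname{dist}(0,\mathcal{N}_-)\ge r(1)>0 .
\end{equation*}
The only point to check is that $0\notin\mathcal{N}_-$. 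This holds because $D(0)=0$, so the strict inequality in the definition of $\mathcal{N}_-$ excludes $0$.

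For part (2), take $u\in E^\alpha\cap\mathcal{N}_+$, so $E(u)<\alpha$ and $B(u)<A(u)$. Writing
\begin{equation*}
E(u)=\Bigl(\frac{1}{2}-\frac{1}{p+1}\Bigr)A(u)+\frac{1}{p+1}D(u),
\end{equation*}
as in the proof of Lemma \ref{l06}, the term $\frac{1}{p+1}D(u)$ is positive. Hence
\begin{equation*}
\frac{p-1}{2(p+1)}A(u)<E(u)<\alpha ,
\end{equation*}
that is, $A(u)<\frac{2(p+1)}{p-1}\alpha$. This bound is uniform over the set, so $E^\alpha\cap\mathcal{N}_+$ is contained in a ball of $X$. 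The argument uses only $p>1$, which holds since $p=\frac{N+2}{N-2}$ with $N\ge 3$.

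The argument is elementary and has no serious obstacle. The only care needed is the strict versus non-strict inequalities in part (1): the bound $\operatorname{dist}\ge r(1)$ is exactly what is required, and strictness is not.
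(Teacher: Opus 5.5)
Your proof is correct. Part (2) is the paper's argument, but part (1) takes a different route. The paper bounds $A(u)$ from below through the energy level, writing $d\le E(u)=\frac{p-1}{2(p+1)}A(u)+\frac{1}{p+1}D(u)$. You instead go directly through the Sobolev inequality via Lemma~\ref{l03}(2),(3).

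On $\mathcal{N}$ the two routes give the same constant, since $\frac{2(p+1)}{p-1}d=Nd=S^{N/2}=r(1)^2$. The paper simply treats $d>0$ as known, and that fact itself rests on Sobolev.

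On $\mathcal{N}_-$, however, your route is the one that actually works. The paper's first step $d\le E(u)$ is false there: for any $v\neq 0$, the points $\mu v$ with $\mu$ large lie in $\mathcal{N}_-$, while $E(\mu v)\to-\infty$. The energy argument can be repaired by rescaling $u\in\mathcal{N}_-$ to $\mu^*u\in\mathcal{N}$ with $\mu^*=(A(u)/B(u))^{1/(p-1)}\in(0,1)$, which gives $A(u)>A(\mu^*u)\ge S^{N/2}$. Your direct use of Lemma~\ref{l03}(2) avoids that detour and is self-contained.
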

\begin{proof}
 (1) If $u \in \mathcal{N}$, then we have
\begin{align}\label{jiu}
d & \leq E(u)=\frac{1}{2}A(u)-\frac{1}{p+1}B(u)\nonumber \\
& =\left(\frac{1}{2}-\frac{1}{p+1}\right)A(u)+\frac{1}{p+1} D(u) \\
& =\left(\frac{1}{2}-\frac{1}{p+1}\right)A(u).\nonumber
\end{align}
If $u \in \mathcal{N}_{-}$, then we have
$$
\begin{aligned}
d & \leq E(u)=\frac{1}{2}A(u)-\frac{1}{p+1}B(u) \\
& =\left(\frac{1}{2}-\frac{1}{p+1}\right)A(u)+\frac{1}{p+1} D(u) \\
& \leq\left(\frac{1}{2}-\frac{1}{p+1}\right)A(u).
\end{aligned}
$$
Hence, 0 is away from both $\mathcal{N}$ and $\mathcal{N}_{-}$, i.e. $\operatorname{dist}(0, \mathcal{N})>0$, $\operatorname{dist}\left(0, \mathcal{N}_{-}\right)>0$.

(2) Since $E(u)<\alpha$ and $D(u)>0$, we obtain

$$
\begin{aligned}
\alpha & >E(u)=\frac{1}{2}A(u)-\frac{1}{p+1}B(v) \\
& =\left(\frac{1}{2}-\frac{1}{p+1}\right)A(u)+\frac{1}{p+1} D(u) \\
& >\left(\frac{1}{2}-\frac{1}{p+1}\right)A(u).
\end{aligned}
$$
Hence, for any $\alpha>0$, the set $E^\alpha \cap \mathcal{N}_{+}$ is bounded in $X$.
\end{proof}

\section{Local existence and regularity}
In this section, we investigate the local existence and regularity of weak solutions to equation \eqref{e01}.
\begin{theorem}\label{t21}
  Suppose that $u_0 \in  H_0^1(\Omega)$. Then there exists a weak solution $u$ to problem \eqref{e01} such that
  $u \in L^\infty\left(0, T; H_0^1(\Omega)\right)$, $\frac{\partial}{\partial t}\left(u^{\frac{p+1}{2}} \right)\in  L^2\left(0, T; L^2(\Omega)\right)$.
\end{theorem}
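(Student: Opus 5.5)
I would prove the statement by a Faedo--Galerkin approximation combined with an energy identity adapted to the time-derivative structure $(u^p)_t$. The key observation is the pointwise relation
$$(u^p)_t\,u_t=p\,u^{p-1}u_t^2=\frac{4p}{(p+1)^2}\Big|\big(u^{\frac{p+1}{2}}\big)_t\Big|^2 .$$
Testing the equation with $u_t$ therefore gives
$$\frac{4p}{(p+1)^2}\int_\Omega\Big|\big(u^{\frac{p+1}{2}}\big)_t\Big|^2dx+\frac{d}{dt}E(u(t))=0.$$
This identity controls exactly the two quantities required in Theorem \ref{t21}. Because the time derivative degenerates where $u=0$, I first regularize. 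I replace $u^p$ by $\beta_\varepsilon(u)=(|u|^2+\varepsilon^2)^{\frac{p-1}{2}}u$, which is strictly increasing with $\beta_\varepsilon'(s)\ge c\,\varepsilon^{p-1}>0$. I also approximate $u_0$ by smooth functions $u_{0\varepsilon}\to u_0$ in $H_0^1(\Omega)$.

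\textbf{Step 1: Galerkin scheme.} Take the Dirichlet eigenfunctions $\{w_j\}$ of $-\Delta$ and set $u_m=\sum_{j\le m}g_{jm}(t)w_j$. The projected system is
$$(\beta_\varepsilon(u_m)_t,w_j)+(\nabla u_m,\nabla w_j)=(|u_m|^{p-1}u_m,w_j),\qquad j\le m.$$
Since $\beta_\varepsilon'\ge c_\varepsilon>0$, the mass matrix $(\beta_\varepsilon'(u_m)w_i,w_j)$ is positive definite, so this is a nondegenerate ODE system and it has a local solution.

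\textbf{Step 2: uniform estimates.} Multiplying by $g_{jm}'$ and summing gives
$$\int_0^t\!\!\int_\Omega \beta_\varepsilon'(u_m)|\partial_t u_m|^2\,dx\,ds+E(u_m(t))=E(u_{0m}).$$
The main obstacle is that $E$ is not coercive at the critical exponent, since $-\frac1{p+1}B(u)$ can dominate. I would handle this with a continuity argument. Since $B(u)\le S^{-\frac{p+1}{2}}A(u)^{\frac{p+1}{2}}$, we get
$$\tfrac12A(u_m(t))\le E(u_{0m})+\tfrac{1}{p+1}S^{-\frac{p+1}{2}}A(u_m(t))^{\frac{p+1}{2}}.$$
Because $t\mapsto A(u_m(t))$ is continuous and $A(u_{0m})\to A(u_0)$, there exist $T>0$ and $M$, depending only on $\|\nabla u_0\|_2$, such that $A(u_m(t))\le M$ on $[0,T]$ uniformly in $m$ and $\varepsilon$. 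A cleaner way to get this is to control $\frac{d}{dt}A(u_m)$ by a comparison ODE $y'\le C(1+y)^{q}$, whose solution stays bounded on a uniform time interval. This also lets the Galerkin solutions be extended to $[0,T]$. The energy identity then bounds
$$\int_0^T\!\!\int_\Omega \beta_\varepsilon'(u_m)|\partial_tu_m|^2\,dx\,dt .$$
Since $\beta_\varepsilon'(s)\ge c(|s|^2+\varepsilon^2)^{\frac{p-1}{2}}$, this quantity dominates $\big\|\partial_t\Phi_\varepsilon(u_m)\big\|_{L^2(\Omega_T)}^2$, where $\Phi_\varepsilon'=\sqrt{\beta_\varepsilon'}$, and $\Phi_\varepsilon(s)\approx |s|^{\frac{p-1}{2}}s$.

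\textbf{Step 3: passage to the limit.} First let $m\to\infty$, then $\varepsilon\to0$. We have $u_m$ bounded in $L^\infty(0,T;H_0^1)$ and $\Phi_\varepsilon(u_m)$ bounded in $H^1(0,T;L^2)\cap L^\infty(0,T;L^{2})$. An Aubin--Lions type argument, applied to $\Phi_\varepsilon(u_m)$ and using the fact that $\Phi_\varepsilon$ is a homeomorphism, then gives $u_m\to u$ a.e. in $\Omega_T$. Combined with the bound in $L^\infty(0,T;L^{p+1})$, Vitali's theorem yields:
\begin{itemize}
\item $|u_m|^{p-1}u_m\to |u|^{p-1}u$ in $L^1(\Omega_T)$;
\item $\beta_\varepsilon(u_m)\to u^p$ strongly in $L^1(\Omega_T)$;
\item $\partial_t\Phi_\varepsilon(u_m)\rightharpoonup (u^{\frac{p+1}{2}})_t$ weakly in $L^2(\Omega_T)$.
\end{itemize}
The weak form for $u$ holds in the distributional sense of Definition \ref{wd}, with $(u^p)_t$ understood through integration by parts in time. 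Lower semicontinuity preserves the $L^\infty(0,T;H_0^1)$ and $L^2$ time-derivative bounds, and the initial datum is recovered from the uniform-in-time estimates.

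The hardest step is the uniform-in-time $H_0^1$ bound at the critical Sobolev exponent. The compactness of the nonlinear term is only a secondary difficulty, because it is needed only in $L^1$ and the $L^{p+1}$ bound plus a.e. convergence give equi-integrability up to the critical level.
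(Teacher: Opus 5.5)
Your plan works, but in Step 2 you should drop the continuity argument and let the comparison ODE carry the proof. Continuity of each $t\mapsto A(u_m(t))$ gives no time interval that is uniform in $m$ and $\varepsilon$. The inequality $\tfrac12A\le E(u_{0m})+\tfrac{1}{p+1}S^{-\frac{p+1}{2}}A^{\frac{p+1}{2}}$ gives an upper bound only in the potential-well regime $E(u_0)<d$, $A(u_0)<S^{\frac N2}$. Indeed, the maximum of $y\mapsto\tfrac12y-\tfrac{1}{p+1}S^{-\frac{p+1}{2}}y^{\frac{p+1}{2}}$ is $d$, attained at $y=S^{\frac N2}$, and Theorem~\ref{t21} assumes neither condition. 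The ODE bound itself is short. Test the Galerkin system with $\partial_tu_m$ and apply Young's inequality with weight $\beta_\varepsilon'(u_m)$:
\begin{align*}
\frac12\frac{\mathrm{d}}{\mathrm{d}t}A(u_m)&=\int_\Omega|u_m|^{p-1}u_m\,\partial_tu_m\,\mathrm{d}x-\int_\Omega\beta_\varepsilon'(u_m)|\partial_tu_m|^2\,\mathrm{d}x\\
&\le\frac14\int_\Omega\frac{|u_m|^{2p}}{\beta_\varepsilon'(u_m)}\,\mathrm{d}x\le\frac14\|u_m\|_{p+1}^{p+1}\le\frac14S^{-\frac{p+1}{2}}A(u_m)^{\frac{p+1}{2}},
\end{align*}
since $\beta_\varepsilon'(s)=(s^2+\varepsilon^2)^{\frac{p-3}{2}}(ps^2+\varepsilon^2)\ge|s|^{p-1}$. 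The eigenfunction projection is $H_0^1$-orthogonal, so $A(u_{0m})\le A(u_{0\varepsilon})\le A(u_0)+1$ for small $\varepsilon$. Comparison with $y'=\tfrac12S^{-\frac{p+1}{2}}y^{\frac{p+1}{2}}$ then bounds $A(u_m)$ on some $[0,T]$, with $T$ depending only on $\|\nabla u_0\|_2$ and uniform in $m$ and $\varepsilon$. This also continues the Galerkin ODE to $[0,T]$.

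This is the same weighted Young inequality the paper uses in \eqref{ebi5}--\eqref{ebi4}. The paper, however, closes with the linear Gronwall inequality $\frac{\mathrm{d}}{\mathrm{d}t}\frac12\int_\Omega|\nabla u_k|^2\le C\int_\Omega|\nabla u_k|^2+o(\varepsilon)$. That step needs $\int_\Omega|u_k|^{p+1}\le C\int_\Omega|\nabla u_k|^2$, while Sobolev only gives the power $\frac{p+1}{2}$. Your superlinear closure is the estimate that actually holds, and it correctly gives a local $T$ rather than bounds on every $[0,T]$.

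Otherwise the two routes share a skeleton: nondegenerate regularization of $pu^{p-1}$, the energy identity \eqref{ebi2}, and weak-$*$ compactness. They differ in implementation. The paper takes classical solutions of $\varphi_k(u)u_t=\Delta u+u^p$ from the Lady\v{z}enskaja et al.\ theory. You build Galerkin approximations with $\beta_\varepsilon'\ge\varepsilon^{p-1}$ for every $p>1$; by contrast, the paper's $\varphi_k(0)=pb_k\le0$ when $p\ge3$, i.e.\ $N=3,4$. You also derive the a.e.\ convergence that the paper only asserts.

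For the Aubin--Lions step, record a spatial bound as well. For $\varepsilon\le1$ one has $\sqrt{\beta_\varepsilon'(s)}\le\sqrt p\,(|s|^{\frac{p-1}{2}}+1)$, so $\nabla\Phi_\varepsilon(u_m)$ is bounded in $L^\infty(0,T;L^1(\Omega))$. Simon's lemma then gives compactness of $\Phi_\varepsilon(u_m)$ in $L^1(\Omega_T)$.

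What the paper's route buys in exchange is sign control. Classical solutions of a uniformly parabolic problem obey a comparison principle, so $u_0\ge0$ propagates. Your odd Galerkin scheme solves $(|u|^{p-1}u)_t=\Delta u+|u|^{p-1}u$ and gives no sign information. If you want a nonnegative solution, as $u^p$ in \eqref{e01} and $(|u|^p,w)$ in Definition~\ref{wd} suggest, you need an extra step. For example, use $(u^+)^p$ as the source and test the limit equation with $u^-$; this requires a chain rule for $(|u|^{p-1}u)_t$.
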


\begin{theorem}\label{t22}
 Let $u$ be a global solution with  initial value $u_0(x)\in W$.
 Then $u \in L^q\left(\Omega \times\left(t_0, \infty\right)\right)$ for any $q$ $ (2 \leq q<\infty)$ and any $t_0>0$, and
$$
\|u\|_{L^q\left(\Omega \times\left(t_0, \infty\right)\right)} \leq C,
$$
where  $C$ depends only on $N, q$, and $t_0$. In particular, $u$ is a classical solution of problem (\ref{e01}) for all $t \geq t_0>0$.
\end{theorem}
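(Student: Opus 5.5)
The plan is a Moser-type iteration on the energy identities, using the potential well to control the critical Sobolev term uniformly in time. First, invariance of $W$ will give a uniform $H_0^1$ bound. Testing \eqref{e01} with $u_t$ (at the level of the regularized approximations from Theorem \ref{t21}) gives the energy identity
$$\frac{4p}{(p+1)^2}\int_0^t\Big\|\partial_s\big(u^{\frac{p+1}{2}}\big)\Big\|_2^2\,ds+E(u(t))=E(u_0),$$
so $E(u(t))\le E(u_0)<d$. A standard continuity argument shows $D(u(t))>0$ for all $t$: if $D$ first vanished at $t^\ast$ with $u(t^\ast)\ne0$, then $E(u(t^\ast))\ge d$, a contradiction. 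Thus $u(t)\in W$, and Lemma \ref{l06}(1) yields $A(u(t))<\frac{2(p+1)}{p-1}d$ for all $t$. Moreover, since $E(u_0)<d=\frac1N S^{N/2}$, I will quantify the gap. Choose $\delta<1$ with $E(u_0)<d(\delta)$, which is possible by Lemma \ref{l04}(3). Then $D_\delta(u(t))>0$, i.e. $B(u)\le \delta A(u)$ uniformly, with a fixed $\delta<1$.

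The second step is a Moser iteration. I will test with $u^{k}$ for $k\ge1$ (truncated if necessary) to get
$$\frac{p}{p+k}\frac{d}{dt}\int u^{p+k}+\frac{4k}{(k+1)^2}\big\|\nabla u^{\frac{k+1}{2}}\big\|_2^2=\int u^{p+k}.$$
Writing $\int u^{p+k}=\int u^{p-1}\,(u^{\frac{k+1}{2}})^2$, I will split $u^{p-1}$ into the region $\{u>M\}$ and its complement. On $\{u>M\}$, Hölder and Sobolev give the bound $\|u\|_{L^{2^*}(u>M)}^{p-1}S^{-1}\|\nabla u^{\frac{k+1}{2}}\|_2^2$. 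Because $\|u(t)\|_{2^*}$ is uniformly bounded, equi-integrability in time is needed to make $\|u\|_{L^{2^*}(u>M)}^{p-1}$ small uniformly in $t$, and this is the main obstacle. I expect instead to use the quantitative gap: $\|u\|_{p+1}^{p-1}\le(\delta A/S)^{\dots}$ gives $\|u\|_{p+1}^{p-1}\le \delta' S$ with $\delta'<1$ only for $k=1$. For larger $k$ the coefficient $4k/(k+1)^2$ degrades, so I would first bootstrap from $k=1$. The $k=1$ identity is $\frac{d}{dt}\frac{p}{p+1}\|u\|_{p+1}^{p+1}+D(u)=0$, which gives $\int_0^\infty A(u)\,dt<\infty$ since $D(u)\ge(1-\delta)A(u)$. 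Combined with the Brezis–Kato trick (treating $u^{p-1}\in L^{N/2}$ with a small tail, uniformly because $u(t)$ ranges in a set bounded in $H^1$), this should give the $L^q$ bounds step by step for $q=2^*\frac{k+1}{2}$. Integrating in $t$ over $(t_0,\infty)$, with a cutoff in time near $t_0$ to absorb the initial term, yields $\|u\|_{L^q(\Omega\times(t_0,\infty))}\le C(N,q,t_0)$. Dependence only on $N,q,t_0$ comes from $d$ and $S$ bounding all quantities.

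The final step is regularity. Once $u\in L^q$ for all large $q$, the equation $pu^{p-1}u_t=\Delta u+u^p$ on sets where $u$ is bounded away from zero is uniformly parabolic. Away from the degeneracy I will rewrite it through $w=u^p$, obtaining a fast diffusion equation $w_t=\Delta w^{1/p}+w$, and invoke the known $C^{\alpha}$ and then Schauder regularity for bounded solutions of fast diffusion equations (DiBenedetto). This gives classical smoothness for $t\ge t_0$. The $L^\infty$ bound itself comes from letting $k\to\infty$ in the iteration with $k$-uniform constants (a standard Moser limit), which I expect to need the time-localized version of the estimates.
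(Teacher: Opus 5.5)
\textbf{The gap: uniform tail smallness.} The proposal breaks at the step that carries the whole iteration: smallness of $\|u(t)\|^{p-1}_{L^{2^*}(\{u>M\})}$ uniformly in $t$. You rightly flag this as the main obstacle, but then assert it holds ``uniformly because $u(t)$ ranges in a set bounded in $H^1$''. That is false. Bounded sets of $H_0^1(\Omega)$ are not equi-integrable in $L^{2^*}$; take $\lambda^{\frac{N-2}{2}}\phi(\lambda(x-x_0))$ with $\lambda\to\infty$.

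For this equation the failure is not a technicality. The map $u(x,t)\mapsto\lambda^{\frac{N-2}{2}}u(\lambda x,t)$ sends solutions of \eqref{e01} on $\Omega$ to solutions on $\lambda^{-1}\Omega$ without rescaling time. It leaves $A$, $B$, $E$, $D$ unchanged, hence preserves membership in $W$. Meanwhile $\|u\|_{L^q}^q$ is multiplied by $\lambda^{q\frac{N-2}{2}-N}$. Applying this scaling to any solution not yet extinct at $t_0$ shows that for $q>2^*$ no bound built only from $d$, $S$ and $\sup_tA(u(t))$ can hold. In particular no bound depending only on $N,q,t_0$ can hold, so your closing claim about $C$ cannot be right.

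The quantitative gap does not rescue the iteration either. It controls the reaction term only at or near $k=1$. At $k=1$ the parabolic Sobolev embedding returns exactly $L^{p+1}$, since $2+\frac{2}{N}(p+1)=p+1$, so ``bootstrapping from $k=1$'' gains nothing.

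\textbf{What the paper supplies, and a remaining caution.} Your overall scheme is the same as the paper's: test with powers, split $u^{p-1}$ at level $M$, apply Sobolev on the tail. What is missing is an equi-integrability property of the particular trajectory. This is exactly the Claim in the paper's proof: $\varepsilon(M)=\frac1S\sup_t\big(\int_{\{u^{p-1}\ge M\}}u^{p+1}\,dx\big)^{2/N}\to0$ as $M\to\infty$. It comes from two facts.
\begin{itemize}
\item The finite-time extinction of Theorem~\ref{t34} makes the tail small, indeed zero, for large $t$.
\item Continuity in time on $[0,T]$ makes $\{u(t)^{p+1}:0\le t\le T\}$ compact, hence uniformly integrable, in $L^1(\Omega)$. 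This continuity already follows from $\partial_t(u^{\frac{p+1}{2}})\in L^2$.
\end{itemize}
With this, your splitting $\int u^{p+k}\,dx\le M\int u^{k+1}\,dx+\varepsilon(M)\|\nabla u^{\frac{k+1}{2}}\|_2^2$ holds uniformly in $t$ and absorbs the reaction term at every level $k$. The price is that $M$, and hence the final constant, depends on $u$ itself.

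One further caution concerns the time cutoff. Unlike for $u_t=\Delta u+u^p$, the term $\frac{2p}{p+k}\iint u^{p+k}\eta|\eta'|$ here has the same critical integrand as the reaction term. Absorbing its tail part needs $\varepsilon(M)|\eta'|\lesssim\eta$ on the support of $\eta$. The paper arranges this by postulating $\eta\in C_c^\infty(\frac{t_0}{2},T)$ with $|\eta'|<\eta/t_0$, but no such cutoff exists, since the condition forces $\eta\equiv0$. So your ``cutoff in time to absorb the initial term'' is also a step that still needs an actual argument.
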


\begin{proof}[Proof of Theorem \ref{t21}]
We first construct approximate solutions to problem \eqref{e01}. Define
\begin{equation*}
\varphi_k^{+}(s)=
\begin{cases}
p s^{p-1}, & s \geq k^{-1}, \\
p\left(a_k s^2+b_k\right), & 0\leq s<k^{-1},
\end{cases}
\end{equation*}
where
\[
a_k=\frac{k^{2-p}(p-1)}{2},\quad b_k=\frac{k^{1-p}(3-p)}{2},\quad k=1,2,\ldots.
\]
We extend $\varphi_k^{+}(s)$ even-symmetrically to the entire real line $\mathbb{R}$ and denote the extended function by $\varphi_k(s)$. It is easy to verify that $\varphi_k(s) \in C^1(\mathbb{R})$ and
\[
\varphi_k(s) \to p|s|^{p-1} \quad \text{as } k \to \infty \text{ for all } s \neq 0.
\]
Since $u_0\in X$, we may choose a sequence $\{u_{0,k}\} \subset C_0^2(\Omega)$ satisfying
\[
\big\|\nabla u_{0,k}-\nabla u_0\big\|_{2} \to 0 \quad \text{as } k \to \infty.
\]
Now we consider the following approximate problem:
\begin{equation}\label{ebi}
\begin{cases}
\varphi_k(u)u_t =\Delta u+u^{p}, & \text{in } \Omega \times(0, T), \\
u(x,0)=u_{0,k}(x), & \text{in } \Omega,\\
u=0, & \text{on } \partial \Omega \times(0, T).
\end{cases}
\end{equation}
One can readily check that problem \eqref{ebi} satisfies all the assumptions for global classical solvability
established by Lady\v{z}eskaya et al. (Theorem 4.1, p. 558 in \cite{rlady}). Consequently, problem \eqref{ebi} admits a unique nonnegative classical solution $u_{k}(x,t)$.

Multiplying both sides of \eqref{ebi} by $u_{k,t}$ and integrating by parts, we derive
\begin{equation}\label{ebi1}
\int_{\Omega}\varphi_k(u_{k})u_{k,t}^2\mathrm{d}x
+\frac{\mathrm{d}}{\mathrm{d}t}\left(\frac{1}{2}\int_{\Omega} |\nabla u_k|^2\mathrm{d}x\right)
=\int_{\Omega }u_k^{p}u_{k,t}\mathrm{d}x
\end{equation}
and
\begin{equation}\label{ebi2}
\int_0^t\int_{\Omega}\varphi_k(u_{k})u_{k,t}^2\mathrm{d}x\mathrm{d}\tau+E(u_k(t))=E(u_k(0)),\quad t\in[0,T].
\end{equation}

We apply the weighted Young inequality
\[
ab \leq \frac{\varepsilon}{2} a^2+\frac{1}{2 \varepsilon} b^2,\quad \forall \varepsilon>0,
\]
to the product term $|u_{k,t}| \cdot |u_k^p|$. Set
\[
a=\sqrt{\varphi_k(u_k)} \cdot |u_{k,t}|, \quad b=\frac{|u_k|^p }{\sqrt{\varphi_k(u_k)}}.
\]
Then
\[
|u_{k,t}| \cdot |u_k|^p
=\left(\sqrt{\varphi_k(u_k)}\,|u_{k,t}|\right) \cdot \frac{|u_k|^p}{\sqrt{\varphi_k(u_k)}}
=ab.
\]
Substituting this into the inequality and integrating over $\Omega$ yields
\begin{equation}\label{ebi5}
\int_{\Omega }|u_k^{p}u_{k,t}|\mathrm{d}x
\leq \frac{\varepsilon}{2}\int_{\Omega } \varphi_k(u_k) u_{k,t}^2\mathrm{d}x
+\frac{1}{2 \varepsilon} \int_{\Omega } \frac{|u_k|^{2 p}}{\varphi_k(u_k)}\mathrm{d}x.
\end{equation}
Define two subdomains
\[
\Omega_1:=\big\{x \in \Omega \,\big|\, | u_k(x, t) | \geq k^{-1}\big\}, \quad
\Omega_2:=\Omega\setminus\Omega_1=\big\{x \in \Omega \,\big|\, | u_k(x, t) | \leq k^{-1}\big\}.
\]
By the definition of $\varphi_k$, we obtain
\begin{equation}\label{ebi4}
\begin{gathered}
\int_{\Omega_1}\frac{|u_k|^{2 p}}{\varphi_k(u_k)} \mathrm{d}x \leq \int_{\Omega}|u_k|^{p+1} \mathrm{d}x \triangleq I, \\
\int_{\Omega_2}\frac{|u_k|^{2 p}}{\varphi_k(u_k)} \mathrm{d}x
\leq k^{p-1}\int_{\Omega_2}|u_k|^{2 p} \mathrm{d}x
\leq C|\Omega|k^{-p-1} \triangleq II.
\end{gathered}
\end{equation}
Combining \eqref{ebi1}, \eqref{ebi5}, \eqref{ebi4} and the Sobolev inequality, we deduce
\begin{align*}
&\int_{\Omega} \varphi_k\left(u_k\right) u_{k, t}^2 \mathrm{d}x
+\frac{\mathrm{d}}{\mathrm{d }t}\left(\frac{1}{2} \int_{\Omega}\big|\nabla u_k\big|^2 \mathrm{d}x\right) \\
&\leq C I + C II
\leq C\int_{\Omega}\big|\nabla u_k\big|^2 \mathrm{d}x + o(\varepsilon).
\end{align*}
This immediately implies
\[
\frac{\mathrm{d}}{\mathrm{d} t}\left(\frac{1}{2} \int_{\Omega}\big|\nabla u_k\big|^2 \mathrm{d}x\right)
\leq C\int_{\Omega}\big|\nabla u_k\big|^2 \mathrm{d}x + o(\varepsilon).
\]
Applying the Gronwall inequality, we arrive at
\begin{equation}\label{ebi6}
\int_{\Omega}\big|\nabla u_k\big|^2 \mathrm{d}x
\leq Ce^T \int_{\Omega}\big|\nabla u_{0,k}\big|^2 \mathrm{d}x
\leq Ce^T \int_{\Omega}\big|\nabla u_{0}\big|^2 \mathrm{d}x.
\end{equation}
Using \eqref{ebi2}, \eqref{ebi6} and the Sobolev inequality again, we further get
\begin{align}\label{ebi3}
\int_0^t\int_{\Omega}\varphi_k(u_{k})u_{k,t}^2\mathrm{d}x\mathrm{d}\tau
&\leq \big|E(u_k(0))\big|+\big|E(u_k(t))\big| \nonumber\\
&\leq \int_{\Omega}|\nabla u_{0,k}|^2\mathrm{d}x+\int_{\Omega}|\nabla u_{k}|^2\mathrm{d}x
+\int_{\Omega}u_{0,k}^{p+1}\mathrm{d}x+\int_{\Omega}u_{k}^{p+1}\mathrm{d}x \nonumber\\
&\leq C \int_{\Omega}|\nabla u_{0}|^2\mathrm{d}x.
\end{align}
Therefore, there exist a function $u$ and a subsequence of $\{u_k\}$ such that
\[
\begin{aligned}
& u_k \rightharpoonup u \quad \text{\ weakly\  star\ in\  } L^{\infty}\big(0, T ; H_0^1(\Omega)\big), \\
& u_k \rightharpoonup u \quad \text{\ weakly\  star\ in\  } L^{\infty}\big(0, T ; L^{p+1}(\Omega)\big), \\
& u_k \to u \quad \text{a.e. in } Q=\Omega \times[0, T), \\
&\frac{\partial}{\partial t}\Big(u_k^{\frac{p+1}{2}}  \Big)
\rightharpoonup \frac{\partial}{\partial t}\Big(u^{\frac{p+1}{2}} \Big)
\quad \text{\ weakly\  star\ in\   } L^2\big([0, T); L^2(\Omega)\big).
\end{aligned}
\]
These convergence results yield
\[
\big((u^p)_t, w\big)+\big(\nabla u, \nabla w\big) =\big(|u|^{p},w\big)
\]
for all $w \in X$ and $t\in[0,T)$.
\end{proof}

\begin{proof}[Proof of Theorem \ref{t22}]
For any fixed $t_0 \in\left(0, \frac{T}{4}\right)$, there exists a cutoff function
 $\eta \in C_c^{\infty}\left(\frac{t_0}{2}, T\right)$ satisfying $0 \leq \eta \leq 1$ on $(0, T)$
  and $\left| \eta^{\prime}\right|<\frac{1}{t_0} \eta$.
  Such a function $\eta$ can be explicitly constructed via standard smooth bump functions
  (the AI Chatgpt or Deepseek  can easily find such a function \(\eta\)).

Let $\psi(x, t)=u^{2 s+p} \eta^2$. We apply $\psi$ as a test function to Definition \ref{wd} so as to obtain
\begin{equation}\label{ff01}
 \int_0^T \int_{\Omega}\left[\psi pu^{p-1}u_t+\nabla u \nabla \psi-\psi u^{p} \right] \mathrm{d} x \mathrm{~d} t=0,
\end{equation}
for $s \geq 0$ to be chosen later. Suppose $u \in L^{2 s+2}\left(Q_T\right)$.
Performing standard manipulations, we obtain the estimate of the first term on the right-hand side of Equation (\ref{ff01}),
\begin{align} \label{ff02}
\int_0^T \int_{\Omega} \psi pu^{p-1} u_t \mathrm{~d} x \mathrm{~d} t&=  \int_0^T \int_{\Omega}p u_t u^{2 s+2p-1} \eta^2 \mathrm{~d} x \mathrm{~d} t \nonumber\\
&=  \frac{p}{2s+2p} \int_0^T \int_{\Omega}\left(u^{2 s+2p}\right)_t \eta^2 \mathrm{~d} x \mathrm{~d} t \nonumber\\
&=  \frac{p}{2s+2p} \int_0^T \int_{\Omega}\left(u^{2 s+2p} \eta^2\right)_t \mathrm{~d} x \mathrm{~d} t \nonumber\\
& \quad-\frac{2p}{2s+2p} \int_0^T \int_{\Omega} u^{2 s+2p} \eta \eta_t \mathrm{~d} x \mathrm{d} t\nonumber\\
&= -\frac{2p}{2s+2p} \int_0^T \int_{\Omega} u^{2 s+2p} \eta \eta_t \mathrm{~d} x \mathrm{d} t\nonumber\\
&\geq-\frac{2p}{2s+2p}\frac{1}{t_0} \int_0^T \int_{\Omega} u^{2 s+2p} \eta^2 \mathrm{~d} x \mathrm{d} t.
\end{align}
The second term on the left-hand side of Equation (\ref{ff01}) can be estimated as
follows:
\begin{align}\label{ff03}
\int_0^T \int_{\Omega} \nabla u \nabla\left(u^{2 s+p} \eta^2\right) \mathrm{d} x
\mathrm{~d} t & =\frac{2 s+p}{\left(s+\frac{1}{2}+\frac{p}{2}\right)^2} \int_0^T \int_{\Omega}\left|\nabla u^{s+\frac{p+1}{2}}\right|^2 \eta^2 \mathrm{~d} x \mathrm{~d} t \nonumber \\
& \geq\frac{1}{s+p} \int_0^T \int_{\Omega}\left|\nabla u^{s+\frac{1}{2}+\frac{p}{2}}\right|^2 \eta^2 \mathrm{~d} x \mathrm{~d} t.
\end{align}
By H\"{o}lder  inequality we can estimate the right-hand term of \eqref{ff02} and  the third term on the left-hand side
of Equation (\ref{ff01}) as follows
\begin{align}\label{ff04}
& \int_0^T \int_{\Omega} \psi u^{p} \mathrm{~d} x \mathrm{~d} t \nonumber\\
& =\int_0^T \int_{\Omega} u^{p-1} u^{2 s+p+1} \eta^2 \mathrm{~d} x \mathrm{~d} t \nonumber\\
& \leq M \int_0^T \int_{\Omega} u^{2 s+p+1} \eta^2 \mathrm{~d} x \mathrm{~d} t+\int_0^T \eta^2\left(\int_{\left\{u^{p-1} \geq M\right\}} u^{\frac{4}{N-2} \frac{N}{2}} \mathrm{~d} x\right)^{\frac{2}{N}} \nonumber\\
& \quad \times\left(\int_{\Omega}\left(u^{2 s+p+1}\right)^{\frac{N}{N-2}} \mathrm{~d} x\right)^{\frac{N-2}{N}} \mathrm{~d} t \nonumber\\
& \leq M \int_0^T \int_{\Omega} u^{2 s+p+1} \eta^2 \mathrm{~d} x \mathrm{~d} t+\frac{1}{S} \sup _t\left(\int_{\left\{u^{p-1} \geq M\right\}} u^{p+1} \mathrm{~d} x\right)^{\frac{2}{N}} \nonumber\\
& \quad \times \int_0^T \eta^2 \int_{\Omega}\left|\nabla u^{s+\frac{1}{2}+\frac{p}{2}}\right|^2 \mathrm{~d} x \mathrm{~d} t
\end{align}
where S is the best Sobolev constant. From Equations \eqref{ff01}-\eqref{ff04}, we have
\begin{align}
\frac{1}{s+p} \int_0^T \int_{\Omega}\left|\nabla u^{s+\frac{1}{2}+\frac{p}{2}}\right|^2 \eta^2 \mathrm{~d} x \mathrm{~d} t
  &\leq 
  C \int_0^T \int_{\Omega} u^{2 s+p+1} \eta^2 \mathrm{~d} x \mathrm{~d} t \nonumber\\
 &+\frac{1}{S} \sup _t\left(\int_{\left\{u^{p-1} \geq M\right\}} u^{p+1} \mathrm{~d} x\right)^{\frac{2}{N}} \int_0^T \int_{\Omega}\left|\nabla u^{s+\frac{1}{2}+\frac{p}{2}}\right|^2 \eta^2 \mathrm{~d} x \mathrm{~d} t .
\end{align}
Now we have the following claim:

\textbf{Claim.} Let

$$
\varepsilon(M)=\frac{1}{S} \sup _t\left(\int_{\left\{u^{p-1} \geq M\right\}} u^{p+1} \mathrm{~d} x\right)^{\frac{2}{N}},
$$
then $\varepsilon(M) \rightarrow 0$ as $M \rightarrow \infty$.

\textbf{Proof of the Claim:} Let
$$g(t)=\int_{\Omega} u^{p+1} \mathrm{~d} x,$$
 and
 $$g_M(t)=\int_{\left\{u^{p-1}\geq M\right\}} u^{p+1} \mathrm{~d} x.$$
We have to prove that for any $\epsilon>0$, there exists a positive number $M_0 \in \mathbb{R}^{+}$ such that
$$
g_M(t)<\epsilon, \quad \text { as } M>M_0, t \in(0, \infty) .
$$
From \eqref{e0538}, we can choose a $T$ such that
$$
g_M(t) \leq g(t)<\epsilon, \quad \text { for } t>T
$$
So we will complete the proof if we prove that
$$
g_M(t)<\epsilon, \quad \text { as } M>M_0, \text { for } t \in[0, T] .
$$
Indeed, by means of $u \in C\left([0, T]; X\right)$, we obtain $g(t)$ is continuous function on $[0, T]$. So it is uniformly continuous function on $[0, T]$. The rest is obvious, which completes the proof of the claim.

Now we can complete the proof of $L^q$-estimate. Let
$$s_0=0, 2s_0+p+1=p+1, 2s_i+p+1=\left(2s_{i-1}+p+1\right.)\left(1+\frac{2}{N}\right), $$
for $i \geq 1$. For any given $q (1\leq q<\infty)$,
there exists a $i_0$ such that
$$ \left(2s_{i_0}+p+1\right)<q \leq \left(2s_{i_0+1}+p+1\right)=\left(2s_{i_0}+p+1\right)\left(1+\frac{2}{N}\right), $$
and choosing $M$ such that

$$
\varepsilon(M)=\frac{1}{q}<\frac{1}{2\left(s_{i_0-1}+1\right)} .
$$
We may now conclude that
\begin{align}
& \frac{1}{s+p} \int_0^T \int_{\Omega}\left|\nabla u^{s+\frac{1}{2}+\frac{p}{2}}\right|^2 \eta^2 \mathrm{~d} x \mathrm{~d} t \nonumber\\
& \quad \leq 
C \int_0^T \int_{\Omega} u^{2 s+p+1} \eta^2 \mathrm{~d} x \mathrm{~d} t
\end{align}
By Sobolev inequality, we get
\begin{align}
\int_0^T & \int_{\Omega} u^{(2 s+p+1)\left(1+\frac{2}{N}\right)} \eta^{2\left(1+\frac{2}{N}\right)} \mathrm{d} x \mathrm{~d} t \nonumber\\
& \leq C \frac{1}{S}\left(\int_0^T \int_{\Omega} u^{2 s+p+1} \eta^2 \mathrm{~d} x\right)^{\frac{2}{N}}\left(\int_0^T \int_{\Omega}\left|\nabla u^{s+\frac{p+1}{2}}\right|^2 \eta^2 \mathrm{~d} x \mathrm{~d} t\right) \nonumber\\
& \leq C\left(\int_0^T \int_{\Omega} u^{2 s+p+1} \eta^2 \mathrm{~d} x \mathrm{~d} t\right)^{1+\frac{2}{N}},
\end{align}
where $C$ depends on $N, s, p$.

Therefore, from \eqref{e0538}, $u \in W_q^{2,1}\left(Q_{\infty}\right)$ for any $1 \leq q<\infty$,
where $Q_{\infty}=\Omega \times\left[t_0, \infty\right)$, and
we apply the standard bootstrap argument \cite{rlady}
to obtain $u \in C^{(2, \alpha)(1, \alpha)}\left(Q_{\infty}\right)$.
Thus, $u(x, t)$ is a classical solution for all $t \geq t_0>0$, which completes the proof of Theorem \ref{t22}.
\end{proof}

\section{Low initial energy $E\left(u_0\right)<d$}
In this section, we consider solutions with low initial energy $E\left(u_0\right)<d$.
We discuss finite-time extinction and infinite-time blowup phenomena for the problem \eqref{e01}.

\begin{theorem}\label{t31}
   Assume that $u_0 \in X$, $T$ is the maximal existence time of $u$, and $0<e<d, \delta_1<\delta_2$ are two roots of equation $d(\delta)=e$. We have
 \begin{description}
   \item[(1)]  If $D\left(u_0\right)>0$, all weak solutions $u$ of problem (\ref{e01}) with $E\left(u_0\right)=e$ belong to $W_\delta$ for $\delta_1<\delta<\delta_2, 0 \leqslant t<T$;

   \item[(2)] If $D\left(u_0\right)<0$, all weak solutions $u$ of problem (\ref{e01}) with $E\left(u_0\right)=e$ belong to $V_\delta$ for $\delta_1< \delta<\delta_2, 0 \leqslant t<T$.
 \end{description}
\end{theorem}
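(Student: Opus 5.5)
This is the standard invariance argument for the family of potential wells, driven by the energy identity and the monotonicity of $d(\delta)$ in Lemma \ref{l04}. Throughout we use the energy inequality for weak solutions obtained from \eqref{ebi2} in the limit $k\to\infty$ via lower semicontinuity:
$$\frac{4p}{(p+1)^2}\int_0^t\left\|\partial_\tau\big(u^{\frac{p+1}{2}}\big)\right\|_2^2\,d\tau+E(u(t))\le E(u_0),\qquad 0\le t<T.$$
In particular $E(u(t))\le e$ for all $0\le t<T$. By Lemma \ref{l04}(3), $d(\delta)>e$ for every $\delta\in(\delta_1,\delta_2)$, since $d$ increases on $(0,1]$, decreases on $[1,\frac{p+1}{2}]$, and $\delta_1<1<\delta_2$. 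Hence $E(u(t))<d(\delta)$ for all such $\delta$ and all $t$. It remains to show that the sign of $D_\delta(u(t))$ is preserved.

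\textbf{Initial time.} For (1), we show $D_\delta(u_0)>0$ for all $\delta\in(\delta_1,\delta_2)$. If $D_\delta(u_0)<0$ for some such $\delta$, then $D(u_0)>0$ lets us move $\delta$ continuously toward $1$ and find $\delta'$ between $\delta$ and $1$ with $D_{\delta'}(u_0)=0$. This uses that $D_\delta(u_0)=\delta A(u_0)-B(u_0)$ is affine in $\delta$. Since $u_0\neq0$ (because $E(u_0)=e>0$), this gives $u_0\in\mathcal N_{\delta'}$. Then $E(u_0)\ge d(\delta')>e$ because $\delta'\in(\delta_1,\delta_2)$, a contradiction. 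The case $D_\delta(u_0)=0$ is excluded the same way. For (2) the argument is symmetric: $D(u_0)<0$ together with a vanishing of $D_{\delta'}$ for some $\delta'$ in $(\delta_1,\delta_2)$ again forces $E(u_0)\ge d(\delta')>e$.

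\textbf{Positive times.} Argue by contradiction. In case (1), suppose some $t_0\in(0,T)$ is the first time with $u(t_0)\notin W_\delta$. Since $E(u(t_0))<d(\delta)$, continuity of $t\mapsto u(t)$ in $X$ (so that $A$ and $B$, hence $D_\delta$, are continuous in $t$) gives $D_\delta(u(t_0))=0$ and $u(t_0)\neq0$. Then $u(t_0)\in\mathcal N_\delta$, so $E(u(t_0))\ge d(\delta)$, contradicting $E(u(t_0))\le e<d(\delta)$.

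In case (2), let $t_0$ be the first time with $D_\delta(u(t_0))=0$. By Lemma \ref{l03}(2), $\sqrt{A(u(t))}>r(\delta)$ for $t<t_0$. Passing to the limit, $\sqrt{A(u(t_0))}\ge r(\delta)>0$, so $u(t_0)\neq0$. Thus $u(t_0)\in\mathcal N_\delta$, which again contradicts $E(u(t_0))<d(\delta)$.

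\textbf{Main obstacle.} The delicate step is the regularity needed for the continuity argument. We need the energy inequality to hold for the weak solution and not just for the approximations $u_k$, and we need $D_\delta(u(t))$ to be continuous in $t$. The plan is to pass to the limit in \eqref{ebi2} using weak lower semicontinuity of $A$ and of the dissipation term, and the bound on $\partial_t(u^{\frac{p+1}{2}})$ in $L^2$. For $B(u(t))=\|u^{\frac{p+1}{2}}\|_2^2$, the $L^2$ time-derivative bound gives continuity in $t$. For $A(u(t))$ we would assume $u\in C([0,T);X)$, as the paper already uses in the proof of Theorem \ref{t22}.

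If that continuity is unavailable, the fallback is to run the whole argument for the smooth approximations $u_k$, where $u_{0,k}\to u_0$ keeps $E(u_{0,k})<d(\delta)$ and preserves the sign of $D_\delta$ for large $k$. The invariance then passes to the limit. The strict inequality survives because $D_\delta\le0$ in the limit combined with $E<d(\delta)$ forces either $D_\delta<0$ or $u=0$, and $u=0$ is excluded by the $r(\delta)$ lower bound.
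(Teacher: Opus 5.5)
Your invariance argument is the paper's. The fixed sign of $D_\delta(u_0)$ on $(\delta_1,\delta_2)$ is Lemma \ref{l32}, which you reprove via the affine dependence on $\delta$. Energy monotonicity gives $E(u(t))\le e<d(\delta)$. A first-crossing argument in $t$ then produces an element of $\mathcal N_\delta$ with energy below $d(\delta)$, and in case (2) Lemma \ref{l03}(2) guarantees $u(t_0)\neq 0$. Like the paper, you take the time continuity of $D_\delta(u(t))$ for granted.

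The step that fails is your derivation of the energy inequality from \eqref{ebi2} by lower semicontinuity. You invoke weak lower semicontinuity of $A$ and of the dissipation term. However, the term $-\frac{1}{p+1}B(u_k(t))$ goes the wrong way: at the critical exponent, $u_k(t)\rightharpoonup u(t)$ in $X$ only gives $B(u(t))\le\liminf B(u_k(t))$, not convergence. So $E$ is not weakly lower semicontinuous on $X$. For example, let $U_\varepsilon$ be a truncated Aubin--Talenti bubble. Then $\lambda U_\varepsilon\rightharpoonup 0$ in $X$ as $\varepsilon\to0$, while $E(\lambda U_\varepsilon)\to S^{N/2}\bigl(\frac{\lambda^2}{2}-\frac{\lambda^{p+1}}{p+1}\bigr)<0=E(0)$ for $\lambda$ large.

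Even if this limit could be justified, it would only cover the particular solution built in Theorem \ref{t21}, whereas the statement concerns all weak solutions. You should instead cite the energy identity of Lemma \ref{l31} for an arbitrary weak solution, as the paper does.

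The same example breaks your fallback in case (2). Neither $D_\delta(u_k(t))<0$ nor $\sqrt{A(u_k(t))}>r(\delta)$ is preserved under weak convergence in $X$, and the weak limit may be $0\notin V_\delta$.
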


\begin{theorem}\label{t32}
 (Global existence). Assume that $u_0 \in X, E\left(u_0\right)<d, D\left(u_0\right)>0$. Then problem (\ref{e01})  has a global solution $u(t) \in L^{\infty}\left(0, \infty ; X\right)$ and $u(t) \in W$ for $0 \leqslant t<\infty$.
\end{theorem}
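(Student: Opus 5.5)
We prove Theorem \ref{t32} by the standard potential well argument, run at the level of the Galerkin/regularized approximations $u_k$ of problem \eqref{ebi} from the proof of Theorem \ref{t21}, and then pass to the limit.

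\emph{Step 1: the approximate initial data lie in $W$.} Since $u_{0,k}\to u_0$ in $X$, Sobolev embedding gives $E(u_{0,k})\to E(u_0)<d$ and $D(u_{0,k})\to D(u_0)>0$. Hence, for $k$ large, $E(u_{0,k})<d$ and $D(u_{0,k})>0$, i.e.\ $u_{0,k}\in W$.

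\emph{Step 2: invariance of $W$.} The energy identity \eqref{ebi2} gives
\[
\int_0^t\int_\Omega \varphi_k(u_k)u_{k,\tau}^2\,dx\,d\tau+E(u_k(t))=E(u_{0,k}),
\]
and since $\varphi_k\ge 0$ this yields $E(u_k(t))\le E(u_{0,k})<d$ for all $t$.

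Suppose $u_k(t)$ leaves $W$. By continuity of $t\mapsto u_k(t)$ in $X$ (the $u_k$ are classical), there is a first time $t_1$ with $D(u_k(t_1))=0$. By Lemma \ref{l07}(1), equivalently Lemma \ref{l03}(1) with $\delta=1$, the solution cannot reach $D=0$ through $u=0$ from positive values without passing near the origin, where $D>0$. So, unless $u_k(t_1)\equiv 0$, we have $u_k(t_1)\in\mathcal N$. Then $E(u_k(t_1))\ge d$ by the definition of $d$, contradicting $E(u_k(t_1))<d$. If $u_k(t_1)=0$, then $u_k(t_1)\in W$ anyway and uniqueness keeps it at $0$. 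Therefore $u_k(t)\in W$ for all $t\in[0,T_k)$.

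\emph{Step 3: uniform a priori bounds.} With $u_k(t)\in W$, Lemma \ref{l06}(1) gives
\[
A(u_k(t))<\frac{2(p+1)}{p-1}\,d \quad\text{for all } t.
\]
The key point is that this bound is independent of $t$ and $k$. It replaces the Gronwall estimate \eqref{ebi6}, which grew like $e^T$.

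From this bound, the energy identity yields
\[
\int_0^\infty\int_\Omega \varphi_k(u_k)u_{k,t}^2\,dx\,dt\le E(u_{0,k})+\frac{1}{p+1}B(u_k)\le C,
\]
using the Sobolev inequality, again uniformly in $k$ and $t$. Because the bounds are time-independent, $u_k$ extends to all of $[0,\infty)$.

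\emph{Step 4: passage to the limit.} We extract the same weak-star and a.e.\ convergences as in the proof of Theorem \ref{t21}, now on every $(0,T)$, and use a diagonal argument in $T$. This gives a global weak solution $u\in L^\infty(0,\infty;X)$. Since $\varphi_k(u_k)u_{k,t}^2\approx \frac{4p}{(p+1)^2}\big|(u_k^{(p+1)/2})_t\big|^2$, we also get $(u^{(p+1)/2})_t\in L^2$.

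To conclude $u(t)\in W$, we use weak lower semicontinuity of $A$ together with strong convergence of $B$. The main obstacle is here: at the critical exponent $p+1=2^*$, the embedding is not compact, so $B(u_k(t))\to B(u(t))$ is not automatic. We plan to handle it as follows.
\begin{itemize}
\item Obtain the energy inequality $E(u(t))\le E(u_0)<d$ directly from the limiting equation, by testing with $u_t$ in the regularized sense; this gives the energy bound for the limit.
\item If $u(t)\notin W$, then $D(u(t))\le 0$ with $u(t)\ne 0$. Scaling $u(t)$ onto the Nehari manifold gives a point $\lambda u(t)\in\mathcal N$ with $\lambda\le 1$ and $E(\lambda u(t))\ge d$.
\item Combine this with the uniform bound $A(u(t))<\frac{2(p+1)}{p-1}d$, which survives by weak lower semicontinuity, to reach a contradiction.
\end{itemize}
If the scaling argument does not close, we would fall back on repeating the first-exit-time argument of Step 2 directly for $u$, using continuity of $u$ in time from Theorem \ref{t22}'s regularity for $t>0$.
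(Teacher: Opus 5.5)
Your route differs from the paper's. The paper never uses the approximations in this proof. It runs the first-exit argument directly on the weak solution $u$, for the whole family $W_\delta$, $\delta_1<\delta<\delta_2$, using Lemma \ref{l32} to place $u_0$ there. It takes the energy identity of Lemma \ref{l31} and the time-continuity of $D_\delta(u(t))$ for granted, and it leaves implicit why $T=\infty$. Your Steps 1--3 are sound and supply exactly that missing piece. Invariance of $W$ is rigorous for the classical $u_k$, and the bound $A(u_k(t))<\frac{2(p+1)}{p-1}d$ is uniform in $t$ and $k$, replacing the $e^T$ growth in \eqref{ebi6}. The price is that $u(t)\in W$ must then be carried through a weak limit at the critical exponent, and your proposal does not actually do this.

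\textbf{A minor point.} Strict inequalities do not survive weak limits. You only get $A(u(t))\le\frac{2(p+1)}{p-1}d$, which does not contradict your scaling bound $A(u(t))\ge\frac{2(p+1)}{p-1}d$. Use instead $A(u_k(t))<\frac{2(p+1)}{p-1}E(u_{0,k})\to\frac{2(p+1)}{p-1}E(u_0)$. This gives $A(u(t))\le\frac{2(p+1)}{p-1}E(u_0)<S^{N/2}=r(1)^2$, and Lemma \ref{l03}(1) yields $D(u(t))>0$ whenever $u(t)\neq0$.

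\textbf{The real gap is $E(u(t))<d$.} It does not follow from the gradient bound, which only gives $E(u(t))\le\frac12A(u(t))<\frac N2 d$. Your first bullet, testing the limit equation with $u_t$, is not legitimate at the regularity of Definition \ref{wd}. There only $(u^{(p+1)/2})_t\in L^2$ is controlled, and $(\nabla u,\nabla u_t)$ has no meaning. Passing to the limit in the approximate energy identity is blocked precisely by the failure of $B(u_k(t))\to B(u(t))$. If you are willing to cite Lemma \ref{l31} for the limit, as the paper does, this step closes at the paper's level of rigor.

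The missing idea is that $E$ is weakly lower semicontinuous along sequences kept strictly below the Sobolev threshold. Set $v_k=u_k(t)-u(t)$. The Brezis--Lieb lemma gives $E(u_k(t))=E(u(t))+E(v_k)+o(1)$. Since $A(v_k)\le A(u_k(t))+o(1)<S^{N/2}$ for large $k$, the Sobolev inequality yields $B(v_k)\le A(v_k)$. Hence $E(v_k)\ge\frac1N A(v_k)\ge0$, and so $E(u(t))\le\liminf_k E(u_k(t))\le E(u_0)<d$.

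This requires $u_k(t)\rightharpoonup u(t)$ in $X$ and a.e. (along a further subsequence) at each fixed $t$. You can get this from the uniform $L^2$ bound on $(u_k^{(p+1)/2})_t$, which gives equicontinuity in $L^{p+1}$ via $|a-b|^{p+1}\le|a^{(p+1)/2}-b^{(p+1)/2}|^2$ for $a,b\ge0$, together with Rellich and Arzel\`a--Ascoli.

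\textbf{Your fallback does not work.} Theorem \ref{t22} presupposes a global solution with data in $W$. Its proof relies on the extinction estimate of Theorem \ref{t34}, which in turn rests on the very invariance you are trying to prove. It also gives no information near $t=0$.
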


\begin{theorem}\label{t33}
Assume that $u_0 \in X, E\left(u_0\right)<d$ and $D\left(u_0\right)<0$. Then the weak solution $u(t)$ of problem (\ref{e01})  blows up in infinite time, that is, there exists a $T>0$ such that
$$
\lim _{t \rightarrow +\infty} \int_0^t\|u(\tau)\|_{p+1}^{p+1} \mathrm{d} \tau=+\infty.
$$
\end{theorem}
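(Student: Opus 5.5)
Our plan is a concavity-type argument built on the functional $M(t)=\int_0^t\|u(\tau)\|_{p+1}^{p+1}\,d\tau$. Note that $\|u\|_{p+1}^{p+1}=B(u)$, and that the time derivative in the equation acts on $u^p$.

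\textbf{Step 1: the two basic identities.} Test the weak formulation with $w=u$. Since $(u^p)_t u=\frac{p}{p+1}(u^{p+1})_t$, this gives
$$\frac{p}{p+1}\frac{d}{dt}\|u\|_{p+1}^{p+1}=-D(u(t)).$$
Testing with $u_t$, as in \eqref{ebi2}, gives the energy identity
$$\int_0^t\int_\Omega p u^{p-1}u_t^2\,dx\,d\tau+E(u(t))=E(u_0).$$
Equivalently, the first term equals $\frac{4p}{(p+1)^2}\int_0^t\|(u^{\frac{p+1}{2}})_t\|_2^2\,d\tau$.

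\textbf{Step 2: invariance and a quantitative bound on $D$.} By Theorem \ref{t31}(2), applied with $e=E(u_0)$, we have $u(t)\in V_\delta$ for all $\delta\in(\delta_1,\delta_2)$. Letting $\delta\to1$, we get $D(u(t))<0$ for all $t$, and by Lemma \ref{l03}(2) also $A(u(t))>r^2(1)$. From the identity $E(u)=\frac{p-1}{2(p+1)}A(u)+\frac{1}{p+1}D(u)$ and $E(u(t))\le E(u_0)$, we obtain
$$-D(u(t))\ge \frac{p-1}{2}A(u(t))-(p+1)E(u_0)\ge \frac{p-1}{2}r^2(1)-(p+1)E(u_0)=(p+1)(d-E(u_0))=:c_0>0,$$
using $d=\frac{p-1}{2(p+1)}r^2(1)=\frac1N S^{N/2}$. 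Combining this with Step 1, $\frac{d}{dt}\|u\|_{p+1}^{p+1}\ge\frac{p+1}{p}c_0$. Therefore $\|u(t)\|_{p+1}^{p+1}\ge \|u_0\|_{p+1}^{p+1}+Ct$, and integrating once more gives $M(t)\ge Ct^2/2\to\infty$. This yields the displayed divergence.

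\textbf{Step 3: global existence, so that the blowup happens in infinite time rather than earlier.} This is the main obstacle. The equation is equivalent to the fast diffusion equation, and $B(u)$ grows at most linearly here. Indeed, by Step 1, $\frac{p}{p+1}\frac{d}{dt}B=B-A\le B$, which rules out finite-time explosion of $B$. For $A$, the energy identity gives $\frac12A(u(t))\le E(u_0)+\frac1{p+1}B(u(t))$, so $A$ is bounded on every finite interval. Since $\frac{d}{dt}B\le\frac{p+1}{p}B$, we get $B(u(t))\le B(u_0)e^{(p+1)t/p}$. These a priori bounds let us extend the local solution of Theorem \ref{t21} to every finite interval, so $T=+\infty$ in the sense of Definition \ref{df1}.

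The delicate point is justifying the use of $w=u$ and $w=u_t$ as test functions for a weak solution. We would handle this at the level of the approximations $u_k$ from the proof of Theorem \ref{t21} and then pass to the limit. Concretely, the identity $\frac{d}{dt}B=\frac{p+1}{p}(B-A)$ holds as an inequality in integrated form by lower semicontinuity, and the direction of that inequality is what the argument needs. Consequently the solution exists globally while $\int_0^t\|u\|_{p+1}^{p+1}\,d\tau\to+\infty$ as $t\to+\infty$.
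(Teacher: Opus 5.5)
Your Steps 1--2 reach the conclusion by a more direct route than the paper. The paper uses the same $f(t)=\int_0^t\|u\|_{p+1}^{p+1}\,d\tau$ and the same identity $f''=-\frac{p+1}{p}D(u)$. It then runs a concavity computation for $ff''-(f')^2$ and splits into two cases. For $E(u_0)\le 0$ it uses only $D(u(t))<0$, Lemma \ref{l4455} and log-convexity. For $0<E(u_0)<d$ it uses invariance of the whole family $V_\delta$, $\delta\in(1,\delta_2)$, to get $D_{\delta_2}(u)\le 0$ and $A(u)\ge r^2(\delta_2)$, hence $-D(u)\ge(\delta_2-1)r^2(\delta_2)$ and $f\ge c\,t^2$.

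You avoid both the concavity machinery and the modified wells. The single estimate $A(u)>r^2(1)$ on $V$, combined with $E(u(t))\le E(u_0)$ and $\frac{p-1}{2(p+1)}r^2(1)=d$, gives $-D(u(t))\ge(p+1)(d-E(u_0))$. This constant is explicit and valid for every $E(u_0)<d$, and it yields the stronger conclusion that $\|u(t)\|_{p+1}^{p+1}$ itself grows at least linearly. The computation is correct. It also shows that the paper's log-convexity step adds nothing here: once $D(u(t))<0$, $f'$ is increasing and $f(t)\ge\|u_0\|_{p+1}^{p+1}\,t$.

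\textbf{Gap in Step 2.} Theorem \ref{t31}(2) is only available for $0<E(u_0)<d$, since the roots $\delta_1<1<\delta_2$ of $d(\delta)=e$ exist only then. So the case $E(u_0)\le 0$, which the statement includes, is not covered. You can close it directly, and for all $E(u_0)<d$ at once:
\begin{itemize}
\item $\|u\|_{p+1}^{p+1}>A(u)>S^{N/2}$ when $D(u)<0$, by Lemma \ref{l03}(2);
\item $\|u\|_{p+1}^{p+1}<A(u)<S^{N/2}$ when $D(u)>0$ and $E(u)<d$, by Lemma \ref{l06}(1);
\item $D(u)=0$ with $E(u)<d$ forces $u=0$, because $\mathcal{N}\subset\{E\ge d\}$.
\end{itemize}
The map $t\mapsto\|u(t)\|_{p+1}^{p+1}$ is continuous, which follows from $\partial_t(u^{\frac{p+1}{2}})\in L^2(\Omega_T)$ in Definition \ref{wd}. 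Since also $E(u(t))\le E(u_0)<d$, this quantity can never reach $S^{N/2}$, so $D(u(t))<0$ for all $t$. Alternatively, use the first-crossing argument the paper gives for \eqref{sue}. Also, ``letting $\delta\to1$'' is unnecessary, since $1\in(\delta_1,\delta_2)$.

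\textbf{Problems in Step 3.} The paper never proves global existence; it simply assumes the weak solution is global. Your Step 3 therefore goes beyond the paper, but as written it does not succeed.
\begin{itemize}
\item An a priori bound on $\|\nabla u(t)\|_2$ on finite intervals does not by itself give continuation at the critical exponent, because the local existence time is not controlled by the $H^1$ norm alone. Theorem \ref{t21} supplies no continuation criterion.
\item Your lower-semicontinuity remark has the direction wrong for the part that matters. Passing from the approximations $u_k$ to the weak limit, Fatou gives $\|u(t)\|_{p+1}^{p+1}\le\liminf_k\|u_k(t)\|_{p+1}^{p+1}$. This preserves the upper bound of Step 3 but not the growth lower bound of Step 2, since concentration at the critical exponent can lose $L^{p+1}$ mass.
\item For the same reason, $E(u(t))\le E(u_0)$ does not follow by lower semicontinuity: $-\frac{1}{p+1}\|u\|_{p+1}^{p+1}$ is not weakly lower semicontinuous.
\item Testing with $w=u(t)$ is in fact admissible directly in Definition \ref{wd}, since $(u^p)_t=\frac{2p}{p+1}u^{\frac{p-1}{2}}\partial_t\big(u^{\frac{p+1}{2}}\big)\in L^{\frac{2N}{N+2}}(\Omega)$ for a.e.\ $t$. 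The genuinely unproved ingredient is the energy inequality, which the paper likewise just asserts.
\item The remark that $B$ ``grows at most linearly'' contradicts the exponential bound you derive next and should be dropped.
\end{itemize}
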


\begin{theorem}\label{t34}
 Assume that $u_0 \in X, E\left(u_0\right)<d$ and $D\left(u_0\right)>0, \delta_1<\delta_2$ are the two roots of equation $d(\delta)=E\left(u_0\right)$.
  Then, for the weak solution $u$ of problem (\ref{e01}) undergoes finite-time extinction, that is,

\begin{equation}\label{e0538}
 \|u\|_{p+1}^{p+1}  \leq
\begin{cases}
\left(B_0^{1-\gamma}-C(1-\gamma) t\right)^{\frac{1}{1-\gamma}}, & 0 \leq t<t^{*}, \\
0, & t \geq t^{*},
\end{cases}
\end{equation}
where $\gamma=\frac{2}{p+1}$,  $B_0= \left\|u(0)\right\|_{p+1}^{p+1},
C=S\left(1-\delta_1\right)\frac{(p+1)}{p}$,
$t^{*} = \displaystyle\frac{B_0^{1-\gamma}}{C(1-\gamma)}$,  $S$ is given in (\ref{e03}).
\end{theorem}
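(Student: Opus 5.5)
We will prove the extinction estimate \eqref{e0538} by deriving a differential inequality for $B(t):=\|u(t)\|_{p+1}^{p+1}$. The plan is to test the equation with $u$ and then control the right-hand side with the invariance of $W_\delta$ from Theorem \ref{t31}.

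\textbf{Step 1: evolution of $B(t)$.} Taking $w=u$ in Definition \ref{wd} and using $(u^p)_t u=\frac{p}{p+1}(u^{p+1})_t$, we obtain, at least formally and rigorously via the approximate problems \eqref{ebi} and passage to the limit,
\begin{equation*}
\frac{p}{p+1}\frac{\mathrm{d}}{\mathrm{d}t}B(t)=-A(u(t))+B(u(t))=-D(u(t)).
\end{equation*}
So it suffices to show $D(u(t))\ge c\,B(t)^{\gamma}$ with $\gamma=\frac{2}{p+1}$.

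\textbf{Step 2: coercivity of $D$ along the flow.} By Theorem \ref{t31}(1), $u(t)\in W_\delta$ for every $\delta\in(\delta_1,\delta_2)$ and all $t$, so $D_\delta(u(t))\ge0$, i.e. $B(u)\le\delta A(u)$. Letting $\delta\downarrow\delta_1$ gives $B(u)\le \delta_1 A(u)$. Since $\delta_1<1$ by Lemma \ref{l04}(3), we get
\begin{equation*}
D(u)=A(u)-B(u)\ge (1-\delta_1)A(u).
\end{equation*}
By \eqref{e03}, $A(u)\ge S\|u\|_{p+1}^2=S\,B(t)^{\gamma}$. Combining these with Step 1 yields
\begin{equation*}
B'(t)\le -\frac{p+1}{p}S(1-\delta_1)B(t)^{\gamma}=-C\,B^{\gamma}.
\end{equation*}

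\textbf{Step 3: integrating the inequality.} Since $\gamma<1$, on any interval where $B>0$ we have $\frac{\mathrm{d}}{\mathrm{d}t}B^{1-\gamma}\le -C(1-\gamma)$. Integrating gives $B(t)^{1-\gamma}\le B_0^{1-\gamma}-C(1-\gamma)t$, which is the first line of \eqref{e0538}. This forces $B$ to reach $0$ no later than $t^*$.

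\textbf{Step 4: staying extinct after $t^*$.} Once $B(t_1)=0$, $B$ stays $0$ because $B'\le0$. Hence $u\equiv0$ for $t\ge t^*$; global existence is supplied by Theorem \ref{t32}.

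\textbf{Main obstacle.} The hardest part is justifying the energy identity in Step 1 for weak solutions. Testing with $u$ requires $(u^p)_t u$ to be integrable and the chain rule to hold, whereas the weak solution class only gives $\partial_t(u^{(p+1)/2})\in L^2$. I would handle this by carrying out the computation on $u_k$ with $\int\Phi_k(u_k)$, where $\Phi_k'(s)=s\varphi_k(s)$, and passing to the limit in integrated form using the a.e. convergence and uniform bounds from Theorem \ref{t21}.

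A secondary point is the limit $\delta\downarrow\delta_1$ in Step 2. It is legitimate because the inequality $B\le\delta A$ is closed in $\delta$ for each fixed $t$.
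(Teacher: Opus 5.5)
Your proposal is correct and follows the paper's proof step for step: test with $w=u$ to get $\frac{p}{p+1}\frac{\mathrm{d}}{\mathrm{d}t}\|u\|_{p+1}^{p+1}=-D(u)$, use the $W_\delta$-invariance of Theorem~\ref{t31} to obtain $D_{\delta_1}(u)\ge 0$, hence $D(u)\ge(1-\delta_1)A(u)\ge(1-\delta_1)S\|u\|_{p+1}^{2}$, and then integrate the resulting inequality as in Lemma~\ref{lf44}. One small remark is that your \emph{main obstacle} is not really one: $u^{\frac{p+1}{2}}\in L^\infty(0,T;L^2(\Omega))$ and $\partial_t(u^{\frac{p+1}{2}})\in L^2(\Omega_T)$ already give $((u^p)_t,u)=\frac{2p}{p+1}\bigl(u^{\frac{p+1}{2}},\partial_t u^{\frac{p+1}{2}}\bigr)=\frac{p}{p+1}\frac{\mathrm{d}}{\mathrm{d}t}\|u\|_{p+1}^{p+1}$ within the weak class, so the detour through the approximate problems is unnecessary (and passing to the limit in $\int_0^t B(u_k)\,\mathrm{d}\tau$ would in fact be delicate at the critical exponent, where a.e.\ convergence plus $L^{p+1}$ bounds do not give strong convergence).
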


In order to prove Theorems \ref{t31}-\ref{t34}, we need the following lemmas:
\begin{lemma}\label{l31}
 (Energy identity) For $0<T \leq \infty$, assume that $u: \Omega \times[0, T) \rightarrow \mathbb{R}$ is a weak solution to problem (\ref{e01}). Then it holds
\begin{align}\label{e08}
 \int_{t_1}^{t_2} \int_{\Omega} pu^{p-1}u^2_{\tau}\mathrm{d}x\mathrm{d}\tau+E(u(t_2))=E(u(t_1)), \quad \forall t_1, t_2 \in (0, T).
\end{align}
\end{lemma}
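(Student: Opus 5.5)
The plan is to derive the identity first for the approximate solutions $u_k$ of \eqref{ebi} constructed in the proof of Theorem \ref{t21}, and then pass to the limit $k\to\infty$. (Alternatively, one can test the weak formulation in Definition \ref{wd} directly with $w=u_t$, but the approximation route is safer because of the limited time regularity.)

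\textbf{Step 1 (smooth level).} For the classical solutions $u_k$, multiply \eqref{ebi} by $u_{k,t}$ and integrate over $\Omega\times(t_1,t_2)$. Integrating by parts in $x$ (the boundary terms vanish since $u_k=0$ on $\partial\Omega$) and using
\[
\int_\Omega u_k^p u_{k,t}\,\mathrm{d}x=\frac{\mathrm{d}}{\mathrm{d}t}\frac{1}{p+1}\int_\Omega u_k^{p+1}\,\mathrm{d}x,
\]
we obtain exactly \eqref{ebi2} on $[t_1,t_2]$:
\[
\int_{t_1}^{t_2}\int_\Omega \varphi_k(u_k)u_{k,t}^2\,\mathrm{d}x\,\mathrm{d}\tau+E(u_k(t_2))=E(u_k(t_1)).
\]

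\textbf{Step 2 (rewriting the dissipation term).} Note that
\[
p\,u^{p-1}u_t^2=\frac{4p}{(p+1)^2}\Big|\big(u^{\frac{p+1}{2}}\big)_t\Big|^2 .
\]
This shows that the dissipation term in \eqref{e08} is precisely a multiple of $\|(u^{(p+1)/2})_t\|_{L^2}^2$, which is finite by Theorem \ref{t21}. On the set $\{|u_k|\ge k^{-1}\}$ we have $\varphi_k(u_k)=p|u_k|^{p-1}$, and on its complement the contribution tends to $0$. By weak lower semicontinuity of the $L^2$ norm, the limit of the dissipation term is therefore at least $\int\!\!\int p u^{p-1}u_t^2$.

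\textbf{Step 3 (energy terms).} We need $E(u_k(t))\to E(u(t))$ for every $t$. Using $u_k\to u$ a.e. together with the bound \eqref{ebi6}, we get weak convergence in $X$, but the critical term $\int u^{p+1}$ is not weakly continuous. I would first aim for strong convergence $u_k(t)\to u(t)$ in $X$.

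\textbf{Main obstacle.} Passing to the limit therefore yields only an energy inequality ``$\le$''. To obtain the reverse inequality I would argue directly on the limit: use $w=u_t$ formally in Definition \ref{wd}, justified via Steklov averages $u_h=\frac1h\int_t^{t+h}u$. Since $(u^{(p+1)/2})_t\in L^2$ and $u\in L^\infty(0,T;X)$, the chain rule in the form
\[
\big((u^p)_t,u_t\big)=\frac{4p}{(p+1)^2}\Big\|\big(u^{\frac{p+1}{2}}\big)_t\Big\|_2^2
\]
holds, and the remaining terms give $\frac{\mathrm{d}}{\mathrm{d}t}\frac12 A(u)$ and $\frac{\mathrm{d}}{\mathrm{d}t}\frac1{p+1}B(u)$. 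Integrating from $t_1$ to $t_2$ then gives equality. The delicate point is that $u_t$ itself is not known to lie in $L^2$ near $\{u=0\}$. I would handle this by working with $\min(u,\cdot)$ truncations, or by restricting to the positivity set and using $u_t=\frac{2}{p+1}u^{\frac{1-p}{2}}\big(u^{\frac{p+1}{2}}\big)_t$ there.
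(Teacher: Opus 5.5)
At the paper's level of rigor your argument is the same as its proof. That proof is only the formal computation: multiply \eqref{e01} by $u_t$, integrate by parts in $x$, and integrate over $(t_1,t_2)$. The justification you build around it does not close, however, and the real difficulty is not where you place it. Steps 1--3 fail before any reverse inequality is needed. For fixed $t$, the bounds from the proof of Theorem \ref{t21} give only $u_k(t)\rightharpoonup u(t)$ in $X$ (plus strong convergence in $L^q$, $q<2^*$). At the critical exponent $p+1=2^*$, the Brezis--Lieb splitting $E(u_k(t))=E(u(t))+E(u_k(t)-u(t))+o(1)$ leaves a remainder that may carry concentrating mass. Nothing in your estimates gives the strong convergence you aim for. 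So for $t_1>0$ you get neither $E(u_k(t_1))\to E(u(t_1))$ nor, without sign information on $E(u_k(t_2)-u(t_2))$, lower semicontinuity at $t_2$. The inequality ``$\le$'' is therefore not established either.

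In the direct route the set $\{u=0\}$ is harmless, and your positivity-set idea is the right one there. For $a,b\ge0$ one has $\max(a,b)^{\frac{p-1}{2}}|a-b|\le C\,|a^{\frac{p+1}{2}}-b^{\frac{p+1}{2}}|$. With $\delta_{\pm h}u(t):=\pm\frac{u(t\pm h)-u(t)}{h}\in X$, the products $u^{\frac{p-1}{2}}\delta_{\pm h}u$ are therefore dominated by difference quotients of $u^{\frac{p+1}{2}}$. Hence $\int_{t_1}^{t_2}((u^p)_t,\delta_{\pm h}u)\,\mathrm{d}t\to\int_{t_1}^{t_2}\!\int_\Omega pu^{p-1}u_\tau^2\,\mathrm{d}x\,\mathrm{d}\tau$. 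For the same reason, the convexity remainder between $(u^p,\delta_{\pm h}u)$ and the difference quotient of $\frac{1}{p+1}B(u)$ is $O(h)$ after time integration.

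The genuine gap is the step you treat as routine, ``the remaining terms give $\frac{\mathrm{d}}{\mathrm{d}t}\frac12A(u)$''. Definition \ref{wd} gives no control of $u_t$ in $X$, so $t\mapsto A(u(t))$ is not known to be absolutely continuous. A single Steklov test only yields
\[
\big(\nabla u(t),\nabla\delta_hu(t)\big)=\frac{A(u(t+h))-A(u(t))}{2h}-\frac{h}{2}\,\|\nabla\delta_hu(t)\|_2^2 ,
\]
and the last term is not known to vanish. This gives only $\int_{t_1}^{t_2}\!\int_\Omega pu^{p-1}u_\tau^2+E(u(t_2))\ge E(u(t_1))$, which is exactly your ``reverse inequality''. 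The missing idea is to test also with the backward quotient $\delta_{-h}u$, where the same term appears with the opposite sign:
\[
\big(\nabla u(t),\nabla\delta_{-h}u(t)\big)=\frac{A(u(t))-A(u(t-h))}{2h}+\frac{h}{2}\,\|\nabla\delta_{-h}u(t)\|_2^2 .
\]
This gives ``$\le$''. The two inequalities together yield \eqref{e08} for a.e.\ $t_1<t_2$ (Lebesgue points of $t\mapsto A(u(t))$), and the approximation argument can be dropped altogether.
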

\begin{proof}
 Multiplying (\ref{e01}) by $u_t$ and integrating over $\Omega$ via the integration by parts, we get (\ref{e08}).
\end{proof}

\begin{lemma}\label{l32}
 If $0<E(u)<d$ for some $u \in X$, and $\delta_1<1<\delta_2$ are the two roots of equation $d(\delta)=E(u)$, then the sign of $D_{\delta}(u)$ does not change for $\delta_1<\delta<\delta_2$.
\end{lemma}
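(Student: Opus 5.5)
We argue by contradiction and use only the structure of $D_\delta$ as a function of $\delta$ together with Lemma \ref{l04}. Note first that $E(u)>0$ forces $u\neq 0$, hence $A(u)>0$. For fixed $u$ the map $\delta\mapsto D_\delta(u)=\delta A(u)-B(u)$ is affine and strictly increasing in $\delta$, hence continuous. So if the sign of $D_\delta(u)$ changed on $(\delta_1,\delta_2)$, the intermediate value theorem would give some $\bar\delta\in(\delta_1,\delta_2)$ with $D_{\bar\delta}(u)=0$. (Strictly, a change of sign means there are $\delta',\delta''$ in the interval with $D_{\delta'}(u)$ and $D_{\delta''}(u)$ of opposite sign or one of them zero. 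In every case we obtain a zero $\bar\delta$ inside the open interval, because the interval is open and a zero at an interior point is already a zero.)

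At such a $\bar\delta$ we have $u\in\mathcal{N}_{\bar\delta}$, since $\|\nabla u\|_2\neq0$. By the definition of $d(\delta)$ as an infimum over $\mathcal{N}_\delta$, this gives $d(\bar\delta)\le E(u)$. We now contradict this using the shape of $d(\delta)$ from Lemma \ref{l04}(3). Since $d$ is increasing on $(0,1]$ and decreasing on $[1,\frac{p+1}{2}]$, and $d(\delta_1)=d(\delta_2)=E(u)$ with $\delta_1<1<\delta_2$, we claim $d(\delta)>E(u)$ for every $\delta\in(\delta_1,\delta_2)$. Indeed, for $\delta\in(\delta_1,1]$ we get $d(\delta)>d(\delta_1)$, and for $\delta\in[1,\delta_2)$ we get $d(\delta)>d(\delta_2)$. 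Both use strict monotonicity, which is what the proof of Lemma \ref{l04}(3) actually yields through the gap $\varepsilon(\delta',\delta'')>0$. Then $d(\bar\delta)>E(u)\ge d(\bar\delta)$, a contradiction. Hence $D_\delta(u)$ has no zero on $(\delta_1,\delta_2)$ and therefore keeps one sign there.

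The main obstacle is the strictness in Lemma \ref{l04}(3): the statement there says only \emph{increasing/decreasing}. If only weak monotonicity were available, $d$ could be constant equal to $E(u)$ on a subinterval, and the argument would fail. I would handle this by invoking the construction in that proof. For $\delta_1<\delta'<\delta''\le 1$ it produces, for each $w\in\mathcal{N}_{\delta''}$, a point $v\in\mathcal{N}_{\delta'}$ with $E(v)<E(w)-\varepsilon$. Taking the infimum over $w$ gives $d(\delta')\le d(\delta'')-\varepsilon<d(\delta'')$, and symmetrically on $[1,\delta_2)$. If one wants to avoid this, an alternative route is the explicit identity $d(\delta)=a(\delta)\delta^{\frac{N-2}{2}}S^{\frac N2}$. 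This follows from Lemma \ref{l04}(1) together with the scaling $s(\delta)$ applied to near-extremals of $S$, and its derivative vanishes only at $\delta=1$ on $(0,\frac{p+1}{2})$. That formula would directly give strict unimodality and the uniqueness of the two roots $\delta_1<1<\delta_2$.
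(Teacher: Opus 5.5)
Your proof is correct and follows the paper's argument: a sign change forces a zero $\bar\delta\in(\delta_1,\delta_2)$ of $D_\delta(u)$, so $u\in\mathcal{N}_{\bar\delta}$ and $E(u)\ge d(\bar\delta)>d(\delta_1)=d(\delta_2)=E(u)$. Your extra steps — the intermediate value argument via the affine dependence on $\delta$, and the strict unimodality of $d$ (for instance via the identity $d(\delta)=a(\delta)r^2(\delta)$) — spell out what the paper uses without stating it.
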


\begin{proof}
Since $E(u)>0$, we have $\|\nabla u\|_{2} \neq 0$. If the sign of $D_{\delta}(u)$ is changeable for $\delta_1<\delta<\delta_2$, then we choose $\bar{\delta} \in\left(\delta_1, \delta_2\right)$ such that $D_{\bar{\delta}}(u)=0$. Hence, by the definition of $d(\bar{\delta})$, we can obtain $E(u) \geqslant d(\bar{\delta})$, which contradicts $E(u)=d\left(\delta_1\right)=d\left(\delta_2\right)<d(\bar{\delta})$.
\end{proof}

\begin{proof}[Proof of Theorem \ref{t31}]
  (1) Let $u(t)$ be any weak solution of problem (\ref{e01}) with $E\left(u_0\right)=e$, $D\left(u_0\right)>0$, and $T$ be the maximal existence time of $u(t)$. Using $E\left(u_0\right)=e, D\left(u_0\right)>0$ and Lemma \ref{l32},
  we have $D_{\delta}\left(u_0\right)>0$ and $E\left(u_0\right)<d(\delta)$. So $u_0(x) \in W_\delta$ for $\delta_1<\delta<\delta_2$. We need to prove that $u(t) \in W_\delta$ for $\delta_1<\delta<\delta_2$ and $0<t<T$. Indeed, if this is not the conclusion, from time continuity of $D(u)$ we assume that there must exist a $\delta_0 \in\left(\delta_1, \delta_2\right)$ and $t_0 \in(0, T)$ such that $u\left(t_0\right) \in \partial W_{\delta_0}$,
  and $D_{\delta_0}\left(u\left(t_0\right)\right)=0,\left\|\nabla u\left(t_0\right)\right\|_{2} \neq 0$ or $E\left(u\left(t_0\right)\right)=d\left(\delta_0\right)$. From the energy equality
\begin{equation}\label{e09}
\int_{0}^{t}\int_{\Omega} pu^{p-1}u^2_{\tau}\mathrm{d}x\mathrm{d}\tau+E(u(t))=E\left(u_0\right)<d(\delta), \delta_1<\delta<\delta_2, \quad 0 \leqslant t<T,
\end{equation}
we easily know that $E\left(u\left(t_0\right)\right) \neq d\left(\delta_0\right)$. If $D_{\delta_0}\left(u\left(t_0\right)\right)=0,
\left\|\nabla u\left(t_0\right)\right\|_{2} \neq 0$, then by the definition of $d(\delta)$ we obtain $E\left(u\left(t_0\right)\right) \geqslant d\left(\delta_0\right)$, which contradicts (\ref{e09}).

(2) Let $u(t)$ be any weak solution of problem (\ref{e01}) with $E\left(u_0\right)=e, D\left(u_0\right)<0$,
and $T$ be the maximal existence time of $u(t)$. Using $E\left(u_0\right)=e, D\left(u_0\right)<0$ and
Lemma \ref{l32}, we have $D_{\delta}\left(u_0\right)<0$ and $E\left(u_0\right)<d(\delta)$. So $u_0 \in V_\delta$
for $\delta_1<\delta<\delta_2$. We need to prove that $u(t) \in V_\delta$ for $\delta_1<\delta<\delta_2$ and $0<t<T$.
 Indeed, if this is not the conclusion, from time continuity of $D_{\delta}(u)$ we assume that there must exist a
 $\delta_0 \in\left(\delta_1, \delta_2\right)$ and $t_0 \in(0, T)$
 such that $u\left(t_0\right) \in \partial V_{\delta_0}$, and $D_{\delta_0}\left(u\left(t_0\right)\right)=0$,
 or $E\left(u\left(t_0\right)\right)=d\left(\delta_0\right)$. From the energy equality (\ref{e09}),
 we easily know that $E\left(u\left(t_0\right)\right) \neq d\left(\delta_0\right)$. If $D_{\delta_0}\left(u\left(t_0\right)\right)=0$,
 and $t_0$ is the first time such that $D_{\delta_0}(u(t))=0$, then $D_{ \delta_0}(u(t))<0$ for $0 \leqslant t<T$. By Lemma (\ref{l03}) (2),
  we have $\sqrt{A(u\left(t_0\right))}>r\left(\delta_0\right)$ for $0 \leqslant t<T$.
  So, $\sqrt{A(u\left(t_0\right))}>r\left(\delta_0\right)$ and $E\left(u\left(t_0\right)\right) \geq  d\left(\delta_0\right)$, which contradicts (\ref{e09}). As required.
\end{proof}

\begin{proof}[Proof of Theorem \ref{t32}]
From Theorem \ref{t21}, the local existence result for problem \eqref{e01}
holds in a more general setting
where the initial datum satisfies $u_0 \in X$,
and the corresponding solution obeys $u \in L^{\infty}([0, T] ; X)$
together with $\partial_t\left(u^{\frac{p+1}{2}}\right) \in L^2\left(0, T ; L^2(\Omega)\right)$.

Using $E\left(u_0\right)<d, D\left(u_0\right)>0$ and Lemma \ref{l32}, we have $D_{\delta}\left(u_0\right)>0$ and $E\left(u_0\right)<d(\delta)$.
 So $u_0 \in W_\delta$ for $\delta_1<\delta<\delta_2$. We need to prove that
 $u(t) \in W_\delta$ for $\delta_1<\delta<\delta_2$ and $0<t<T$.
 Indeed, if this is not the conclusion, from time continuity
 of $D_{\delta}(u)$ we assume that there must exist a $\delta_0 \in\left(\delta_1, \delta_2\right)$
 and $t_0 \in(0, T)$ such that $u\left(t_0\right) \in \partial W_{\delta_0}$, and
 $D_{\delta_0}\left(u\left(t_0\right)\right)=0,\left\|\nabla u(t_0)\right\|_{2} \neq 0$
 or $E\left(u\left(t_0\right)\right)=d\left(\delta_0\right)$. From the energy equality
\begin{equation}\label{e10}
\int_{0}^{t}\int_{\Omega} pu^{p-1}u^2_{\tau}\mathrm{d}x\mathrm{d}\tau+E(u(t))=E\left(u_0\right)<d(\delta), \delta_1<\delta<\delta_2, \quad 0 \leqslant s<S,
\end{equation}
we easily know that $E\left(u\left(t_0\right)\right) \neq d\left(\delta_0\right)$.
If $D_{ \delta_0}\left(u\left(t_0\right)\right)=0,\left\|\nabla u\left(t_0\right)\right\|_{2} \neq 0$,  then by the definition of $d(\delta)$ we obtain $E\left(u\left(t_0\right)\right) \geqslant d\left(\delta_0\right)$, which contradicts (\ref{e10}).
\end{proof}

\begin{proof}[Proof of Theorem \ref{t33}]
 We argue by contradiction. Suppose that there exists a global weak solution $u(t)$. Set
\begin{equation*}
f(t)=\int_0^t\|u\|_{p+1}^{p+1} \mathrm{~d} \tau, t>0.
\end{equation*}
Multiplying (\ref{e01}) by $u$ and integrating over $\Omega \times(0, t)$, we get
\begin{align*}
\int_0^t\int_{\Omega}pu^{p}u_{\tau}\mathrm{d}x\mathrm{d}\tau&=\frac{p}{p+1}\left(\|u(t)\|_{p+1}^{p+1}-\|u(0)\|_{p+1}^{p+1}\right)\nonumber\\
&=\int_0^{t}\int_{\Omega}\left( u^{p+1}-|\nabla u|^2\right)\mathrm{d}x\mathrm{d}\tau\nonumber\\
&=\int_0^{t}(B(u)-A(u))d\tau.
\end{align*}
According to the definition of $f(t)$, we have $f^{\prime}(t)=\|u(t)\|_{p+1}^{p+1}$ and hence
\begin{equation}\label{e105}
\begin{aligned}
f^{\prime}(t)&=\|u\|_{p+1}^{p+1}=\|u(0)\|_{p+1}^{p+1}+\frac{p+1}{p}\int_0^t\int_{\Omega}\left( u^{p+1}-|\nabla u|^2\right)dxd\tau\nonumber \\
   &=  (p+1)\int_0^t\int_{\Omega}  u^p u_{\tau} \mathrm{d} x\mathrm{d}\tau+\|u(0)\|_{p+1}^{p+1}
\end{aligned}
\end{equation}
and

\begin{align}\label{e11}
  f^{\prime\prime}(t)&=\frac{p+1}{p}\int_{\Omega}\left( u^{p+1}-|\nabla u|^2\right)dx\nonumber\\
  &=\frac{p+1}{p}(B(u)-A(u))=-\frac{p+1}{p}D(u).
\end{align}
Then by \eqref{e105}, we deduce
\begin{align}\label{e106}
  (f^{\prime}(t))^2&=  \left[(p+1)\int_0^t\int_{\Omega}  u^p u_{\tau} \mathrm{d} x\mathrm{d}\tau+\|u(0)\|_{p+1}^{p+1}\right]^2\nonumber\\
  &=\left[(p+1) \int_0^t \int_{\Omega} u^p u_\tau \mathrm{d} x \mathrm{d} \tau\right]^2\nonumber\\
  &\quad +2(p+1)\|u(0)\|_{p+1}^{p+1} \int_0^t \int_{\Omega} u^p u_\tau \mathrm{d} x \mathrm{d} \tau+\|u(0)\|_{p+1}^{2p+2}\nonumber\\
  &=\left[(p+1) \int_0^t \int_{\Omega} u^p u_\tau \mathrm{d} x \mathrm{d} \tau\right]^2\nonumber\\
  &\quad +2(p+1)\|u(0)\|_{p+1}^{p+1} \int_0^t \int_{\Omega} u^p u_\tau \mathrm{d} x \mathrm{d} \tau+2\|u(0)\|_{p+1}^{2p+2}-\|u(0)\|_{p+1}^{2p+2}\nonumber\\
  &=\left[(p+1) \int_0^t \int_{\Omega} u^p u_\tau \mathrm{d} x \mathrm{d} \tau\right]^2
  +2\|u(0)\|_{p+1}^{p+1} f^{\prime}(t)-\|u(0)\|_{p+1}^{2p+2}.
\end{align}
Now using (\ref{e10}), (\ref{e11}) and
$$
\begin{aligned}
E(u) & =\frac{1}{2}A(u)-\frac{1}{p+1}B(u) \\
& =\frac{p-1}{2(p+1)}A(u)+\frac{1}{p+1} D(u),
\end{aligned}
$$
we can obtain
\begin{align}\label{e1065}
  f^{\prime\prime}(t)&=\frac{p+1}{p}\int_{\Omega}\left( u^{p+1}-|\nabla u|^2\right)\mathrm{d}x=-\frac{p+1}{p} D(u)\nonumber\\
  &=\frac{p^2-1}{2p}A(u)-\frac{(p+1)^2}{p}E(u)\nonumber\\
  &= \frac{p^2-1}{2p}A(u)+(p+1)^2\int_{0}^{t} \int_{\Omega}  u^{p-1} u_\tau^2 \mathrm{d} x \mathrm{d} \tau-\frac{(p+1)^2}{p}E\left(u\left(0\right)\right).
\end{align}
Continuing, in view of (\ref{e03}), we have
\begin{align}
 f(t)f^{\prime\prime}(t)&=  f(t)\left[\frac{p^2-1}{2p}A(u)+(p+1)^2\int_{0}^{t} \int_{\Omega}  u^{p-1} u_\tau^2 \mathrm{d} x \mathrm{d} \tau-\frac{(p+1)^2}{p}E\left(u\left(0\right)\right)\right]\nonumber\\
 &= f(t)\frac{p^2-1}{2p}A(u)
 +(p+1)^2 \int_0^t\|u\|_{p+1}^{p+1} \mathrm{~d} \tau \int_{0}^{t} \int_{\Omega}  u^{p-1} u_\tau^2 \mathrm{d} x \mathrm{d} \tau\nonumber\\
 &\quad-\frac{(p+1)^2}{p} \int_0^t\|u\|_{p+1}^{p+1} \mathrm{~d} \tau E\left(u\left(0\right)\right)\nonumber\\
 &\geq  f(t)\frac{p^2-1}{2p}S(f^{\prime}(t))^{\frac{2}{p+1}}
 +(p+1)^2 \int_0^t\|u\|_{p+1}^{p+1} \mathrm{~d} \tau \int_{0}^{t} \int_{\Omega}  u^{p-1} u_\tau^2 \mathrm{d} x \mathrm{d} \tau\nonumber\\
 &\quad-\frac{(p+1)^2}{p} \int_0^t\|u\|_{p+1}^{p+1} \mathrm{~d} \tau E\left(u\left(0\right)\right).
\end{align}
Hence, from \eqref{e106} we have
\begin{equation}\label{e108}
\begin{aligned}
 f(t) f^{\prime \prime}(t)-\left(f^{\prime}(t)\right)^2 &
   \geq (p+1)^2 \int_0^t\|u\|_{p+1}^{p+1} \mathrm{~d} \tau \int_0^t \int_{\Omega} u^{p-1} u_\tau^2 \mathrm{d} x \mathrm{d} \tau\nonumber
   \\ &\quad - \left[(p+1) \int_0^t \int_{\Omega} u^p u_\tau \mathrm{d} x \mathrm{d} \tau\right]^2 \nonumber\\
   &\quad+ \frac{p^2-1}{2 p} S\left(f^{\prime}(t)\right)^{\frac{2}{p+1}} f(t)-\frac{(p+1)^2}{p} \int_0^t\|u\|_{p+1}^{p+1} \mathrm{~d} \tau E(u(0))\nonumber\\
   &\quad-2\|u(0)\|_{p+1}^{p+1} f^{\prime}(t).
\end{aligned}
\end{equation}
Making use of the Schwartz inequality, we have
\begin{align}\label{e107}
 \left[\int_0^t \int_{\Omega}u^pu_{\tau}\mathrm{d}\tau\right]^2&=\left[\int_0^t \int_{\Omega}u^{\frac{p-1}{2}} u^{\frac{p+1}{2}} u_{\tau}\mathrm{d}\tau\right]^2\\
  &\leq \int_0^t\int_{\Omega} u^{p+1}\mathrm{d}x \mathrm{~d} \tau \int_0^t \int_{\Omega} u^{p-1} u_\tau^2 \mathrm{d} x \mathrm{d} \tau.
\end{align}
By combining (\ref{e108}) and (\ref{e107}), we obtain that
$$
\begin{aligned}
f(t) f^{\prime \prime}(t)-\left(f^{\prime}(t)\right)^2
\geq & \frac{p^2-1}{2 p} S\left(f^{\prime}(t)\right)^{\frac{2}{p+1}} f(t)-\frac{(p+1)^2}{p} \int_0^t\|u\|_{p+1}^{p+1} \mathrm{~d} \tau E(u(0))\nonumber\\
   &\quad-2\|u(0)\|_{p+1}^{p+1} f^{\prime}(t).
\end{aligned}
$$
Next, we distinguish two cases:

(1) If $E\left(u_0\right) \leqslant 0$, then
\begin{align*}
f(t) f^{\prime \prime}(t)-\left(f^{\prime}(t)\right)^2 &\geq \frac{p^2-1}{2 p} S\left(f^{\prime}(t)\right)^{\frac{2}{p+1}} f(t)
-2\|u(0)\|_{p+1}^{p+1} f^{\prime}(t)\\
&= \frac{p^2-1}{2 p} S\left(f^{\prime}(t)\right)^{\frac{2}{p+1}} \left[f(t)
-2\|u(0)\|_{p+1}^{p+1} (f^{\prime}(t))^{1-\frac{2}{p+1}}\right].
\end{align*}
Now we prove
\begin{equation}\label{sue}
D(u)<0 \ \text{for}\  t>0.
\end{equation}
If not, we must be allowed to choose a $t_0>0$
such that $D\left(u\left(t_0\right)\right)=0$ and $D(u)<0$ for $0 \leqslant t<t_0$.
From Lemma \ref{l03} (2), we have  $\sqrt{A(u)}>r(1)$ for $0 \leqslant t<t_0, \sqrt{A(u(t_0))} \geqslant r(1)$
and $E\left(u\left(t_0\right)\right) \geqslant d$, which contradicts (\ref{e08}).

From \eqref{e11}, we know that $f^{\prime\prime}(t)>0$, $f^{\prime}(t)>0$ and  $f(t)>0$, for $t >0$. So for $t \geqslant t_0$, by Lemma \ref{l4455},  we have
$$
f(t) f^{\prime \prime}(t)-\left(f^{\prime}(t)\right)^2>0.
$$

(2) If $0<E\left(u_0\right)<d$, then by Theorem \ref{t31} we have $u(t) \in V_\delta$ for $1<\delta<\delta_2, t \geqslant 0$,
and $D_\delta(t)<0,\sqrt{A(u)}>r(\delta)$ for $1<\delta<\delta_{2}, t \geqslant 0$,
where $\delta_2$ is the larger root of equation
 $d(\delta)=E\left(u_0\right)$. Hence, $D_{\delta_2}(u) \leqslant 0$
 and $\sqrt{A(u)} \geqslant r\left(\delta_2\right)$ for $t \geqslant 0$.
 By (\ref{e11}), we have

$$
\begin{aligned}
f^{\prime \prime}(t) & =-2 D(u)=2\left(\delta_2-1\right)\left(A(u)\right)-2 D_{\delta_2}(u), \\
& \geqslant 2\left(\delta_2-1\right)\left(A(u)\right) \geqslant 2\left(\delta_2-1\right) r^2\left(\delta_2\right), \quad t \geqslant 0,  \\
f^{\prime}(t) & \geqslant 2\left(\delta_2-1\right) r^2\left(\delta_2\right) t+f^{\prime}(0) \geqslant 2\left(\delta_2-1\right) r^2\left(\delta_2\right) t, \quad t \geqslant 0, \\
f(t) & \geqslant\left(\delta_2-1\right) r^2\left(\delta_2\right) t^2, \quad t \geqslant 0 .
\end{aligned}
$$
Therefore, for sufficiently large $t$,
 the above inequalities and  Lemma \ref{l4455}  imply  that
$$
\begin{aligned}
f(t) f^{\prime \prime}(t)-\left(f^{\prime}(t)\right)^2
\geq & \frac{p^2-1}{2 p} S\left(f^{\prime}(t)\right)^{\frac{2}{p+1}} f(t)-\frac{(p+1)^2}{p} \int_0^t\|u\|_{p+1}^{p+1} \mathrm{~d} \tau E(u(0))\nonumber\\
   &\quad-2\|u(0)\|_{p+1}^{p+1} f^{\prime}(t)\\
   \geq & \frac{p^2-1}{4 p} S\left(f^{\prime}(t)\right)^{\frac{2}{p+1}} f(t)-\frac{(p+1)^2}{p} \int_0^t\|u\|_{p+1}^{p+1} \mathrm{~d} \tau E(u(0))\nonumber\\
   &\quad \frac{p^2-1}{4 p} S\left(f^{\prime}(t)\right)^{\frac{2}{p+1}} f(t)-2\|u(0)\|_{p+1}^{p+1} f^{\prime}(t)>0.
\end{aligned}
$$
From the above calculations, we have obtained
\begin{equation*}
f^{\prime \prime}(t) f(t)-\left[f^{\prime}(t)\right]^2>0.
\end{equation*}
Then a direct computation yields
\begin{equation}\label{ef41}
(\log |f(t)|)^{\prime}=\frac{f^{\prime}(t)}{f(t)}
\end{equation}
and
\begin{equation}\label{ef42}
(\log |f(t)|)^{\prime \prime}=\left(\frac{f^{\prime}(t)}{f(t)}\right)^{\prime}=\frac{f^{\prime \prime}(t) f(t)-\left[f^{\prime}(t)\right]^2}{f^2(t)}>0 .
\end{equation}It follows from \eqref{ef42} that \((\log |f(t)|)^{\prime}=\dfrac{f^{\prime}(t)}{f(t)}\)
 is strictly increasing with respect to $t$. Integrating \eqref{ef41} from \(t_0\) to $t$, we get
\begin{align*}
\log |f(t)|-\log \left|f\left(t_0\right)\right|&=\int_{t_0}^t\big(\log |f(\tau)|\big)^{\prime} \mathrm{d} \tau\\
&=\int_{t_0}^t \frac{f^{\prime}(\tau)}{f(\tau)} \mathrm{d} \tau \geq \frac{f^{\prime}\left(t_0\right)}{f\left(t_0\right)}\left(t-t_0\right),
\end{align*}
where \(0 \leq t_0<t\). Consequently,
\begin{equation}\label{ef43}
f(t) \geq f\left(t_0\right) \exp \left(\frac{f^{\prime}\left(t_0\right)}{f\left(t_0\right)}\left(t-t_0\right)\right).
\end{equation}Using inequality \eqref{ef43}, we conclude \(\lim _{t \rightarrow+\infty} f(t)=+\infty\).
This implies that the solution to problem \eqref{e01} blows up as \(t \to +\infty\). The proof is complete.

\end{proof}

\begin{lemma}\label{l4455}
Suppose \(t\geq0\), \(0<\alpha<1\), \(f''(t)>0\), \(f(0)=0\) and \(f'(0)\geq 0\). The following inequality holds:
$$f(t)-\big[f'(t)\big]^{\alpha}>0$$
for sufficiently large t.
\end{lemma}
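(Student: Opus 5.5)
The plan is to split according to whether $f'$ stays bounded, after first recording the elementary consequences of the hypotheses. Since $f''>0$, $f'$ is strictly increasing. Hence $f'(t)>f'(0)\ge 0$ for every $t>0$, and $f(t)=\int_0^t f'(\tau)\,\mathrm{d}\tau>0$ is strictly increasing. Fix $t_1=1$ and set $c:=f'(1)>0$. For $t\ge 1$ convexity gives
$$
f(t)\ge f(1)+c\,(t-1)\ \ge\ c\,(t-1),
$$
so $f(t)\to+\infty$ at least linearly. I would also use the splitting bound
$$
f(t)\ \ge\ \int_{t/2}^{t} f'(\tau)\,\mathrm{d}\tau\ \ge\ \frac t2\,f'\!\left(\tfrac t2\right),
$$
which compares $f(t)$ with $f'$ evaluated at an earlier time.

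\emph{Case 1: $f'$ is bounded.} Then $f'(t)\uparrow L<\infty$ as $t\to\infty$, so $[f'(t)]^{\alpha}\le L^{\alpha}$ for all $t$. On the other hand $f(t)\ge c(t-1)\to\infty$. Hence $f(t)-[f'(t)]^{\alpha}>0$ as soon as $c(t-1)>L^{\alpha}$, which settles this case completely.

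\emph{Case 2: $f'(t)\to\infty$.} By the splitting bound it suffices to show $[f'(t)]^{\alpha}<\frac t2 f'(t/2)$ for large $t$. This holds whenever $f'$ does not grow too abruptly, for instance whenever $f'(t)\le K\,[f'(t/2)]^{1/\alpha}$ with some fixed $K$; this covers polynomial and exponential growth of $f'$. It also suffices to have $f'(t)\le C f(t)^{\beta}$ for some $\beta<1/\alpha$ and large $t$, since $f(t)\to\infty$.

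This second case is the main obstacle. Convexity alone gives no upper control of $f'(t)$ in terms of earlier values. Indeed, one can let $f'$ stay near a fixed value and then climb to an arbitrarily large value on a very short interval ending at $t_n$, for a sequence $t_n\to\infty$. Then $f(t_n)\approx t_n$ while $[f'(t_n)]^{\alpha}$ is as large as we like. So in this case the argument must use the structure of the function to which the lemma is applied, namely $f(t)=\int_0^t\|u\|_{p+1}^{p+1}\,\mathrm{d}\tau$ in the proof of Theorem \ref{t33}.

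What I would try first is a growth bound on $f'=\|u\|_{p+1}^{p+1}$ in terms of $f$. By \eqref{e11}, $f''=-\frac{p+1}{p}D(u)\le \frac{p+1}{p}B(u)=\frac{p+1}{p}f'$. Gronwall then gives $f'(t)\le f'(s)\,e^{\frac{p+1}{p}(t-s)}$ for $s\le t$. Taking $s=t/2$ yields $f'(t)\le f'(t/2)\,e^{\frac{p+1}{2p}t}$, which is the required regularity provided $f'(t/2)$ itself grows at least exponentially; that lower growth is the point that still needs to be checked. I would then close the comparison with $\frac t2 f'(t/2)$ through the quantitative lower bound $f(t)\ge(\delta_2-1)r^2(\delta_2)t^2$ and the monotonicity of $f'/f$ established in that proof. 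The case analysis above, with Case 1 done unconditionally, is the skeleton I would follow.
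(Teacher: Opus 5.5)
Your Case 1 is correct, and it is the case the paper dismisses as trivial. Your diagnosis of Case 2 is also correct, and it settles the matter: the lemma as stated is false. To make your sketch rigorous, let $f(t)=\int_0^t g$ and build a smooth $g$ with $g'>0$, $g(0)=0$ interval by interval. On $[n-1,n-\varepsilon_n]$ let $g$ increase by less than $1$, so that $f(n-\varepsilon_n)\le F_n:=f(n-1)+g(n-1)+1$ for every $\varepsilon_n\in(0,1)$. Then take $M_n>(F_n+1)^{1/\alpha}$ and $\varepsilon_n=1/M_n$, and let $g$ climb to $g(n)=M_n$ on $[n-\varepsilon_n,n]$. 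This gives $f(n)\le F_n+\varepsilon_nM_n<M_n^{\alpha}=[f'(n)]^{\alpha}$ for every $n$. Your heuristic $f(t_n)\approx t_n$ is inaccurate after the first burst, but the induction does not need it.

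The paper's own proof breaks exactly here. It argues by contradiction from ``$f(t)\le[f'(t)]^{\alpha}$ for all sufficiently large $t$'' and integrates $f'/f^{1/\alpha}\ge 1$ over $[t_0,t]$. That is the wrong negation. Denying the conclusion only yields $f(t_n)\le[f'(t_n)]^{\alpha}$ along some sequence $t_n\to\infty$, while the integration needs the inequality on a whole interval. What the paper's argument actually proves is that $f(t)>[f'(t)]^{\alpha}$ on an unbounded set of times. That is weaker than the statement, and weaker than what Theorem~\ref{t33} uses to get $(\log f)''>0$ on a half-line.

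So your proposal does not, and cannot, prove the lemma as stated. What is genuinely left open is your salvage for the application, and there the split at $t/2$ is what makes it look hard. Combined with $f'(t)\le f'(t/2)e^{\frac{p+1}{2p}t}$, it needs $f'(s)$ to grow, up to a power of $s$, at least like $e^{\frac{\alpha}{1-\alpha}\frac{p+1}{p}s}$. For the exponent $\alpha=\frac{p-1}{p+1}$ used in Theorem~\ref{t33} and $N=3$, this exceeds even your Gronwall upper bound $f'(0)e^{\frac{p+1}{p}s}$, so that route cannot close.

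No lower growth rate is needed. Integrate the inequality you already derived, $f''\le\frac{p+1}{p}f'$, over $[0,t]$ to get $f'(t)\le f'(0)+\frac{p+1}{p}f(t)$. This is your own criterion with $\beta=1<1/\alpha$. There $f(t)\ge f'(0)t$ with $f'(0)=\|u_0\|_{p+1}^{p+1}>0$. Hence $K[f'(t)]^{\alpha}\le K(f'(0)+\frac{p+1}{p}f(t))^{\alpha}<f(t)$ for all large $t$ and every constant $K$, in particular for $K=2\|u(0)\|_{p+1}^{p+1}$ as needed in Theorem~\ref{t33}. Splitting at $t-1$ instead of $t/2$ also works, since $f(t)\ge f'(t-1)$ and $f'(t)\le e^{\frac{p+1}{p}}f'(t-1)$. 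The correct fix is therefore to add a hypothesis such as $f''\le cf'$ to the lemma, rather than to prove it as stated.
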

\begin{proof}
It follows from \(f''(t)>0\) that \(f'(t)\) is strictly increasing on \([0,+\infty)\). Combining with \(f(0)=0\), we have
$$f(t)=\int_0^t f'(s) \mathrm{d} s\geq \int_{t_0}^t f'(s) \mathrm{d} s\geq f^{\prime}(t_0)(t-t_0).$$
Hence, we get  $$\lim\limits_{t \to +\infty} f(t) = +\infty.$$

 If $f^{\prime}(t)$ is bounded above, the conclusion is trivial. So, we assume that

 $$
\lim _{t \rightarrow+\infty} f^{\prime}(t)=+\infty.
$$
By contradiction, suppose that for sufficiently large $t$,
$$
\frac{f(t)}{\left[f^{\prime}(t)\right]^\alpha}\leq 1,
$$
that is,

$$
\frac{f^{\prime}(t)}{\left[f(t)\right]^\frac{1}{\alpha}}\geq 1.
$$
Then, one has
$$
\int_{t_0}^t\frac{f^{\prime}(s)}{\left[f(s)\right]^\frac{1}{\alpha}}\mathrm{d}s\geq t-t_0,
$$
So,  $$\frac{\alpha}{\alpha-1}\left(f(t)\right)^{1-\frac{1}{\alpha}}\geq \frac{\alpha}{\alpha-1}\left(f(t_0)\right)^{1-\frac{1}{\alpha}}+(t-t_0).$$
Take the limit of both sides of the above formula as $t\to+\infty$. This leads to a contradiction. The proof is complete.
\end{proof}

\begin{proof}[Proof of Theorem \ref{t34}]
Multiplying (\ref{e01}) by $w, w \in L^{\infty}\left(0, \infty ; X\right)$, we have
\begin{equation}\label{e12}
\left(pu^{p-1}u_t, w\right)+(\nabla u, \nabla w)=\left(|u|^{p}, w\right) .
\end{equation}
Letting $w=u$, (\ref{e12}) implies that
\begin{equation}\label{e13}
\frac{p}{p+1} \frac{\mathrm{d}}{\mathrm{d} t}\|u(t)\|_{p+1}^{p+1}+D(u)=0, \quad 0 \leqslant t<\infty .
\end{equation}
From $0<E\left(u_0\right)<d, D\left(u_0\right)>0$ and Theorem \ref{t31}, we have $u(t) \in W_\delta$ for $\delta_1<\delta<\delta_2$ and $0 \leqslant t<\infty$, where $\delta_1<\delta_2$ are the two roots of equation $d(\delta)=E\left(u_0\right)$.
Hence, we obtain $D_{ \delta}(u) \geqslant 0$ for $\delta_1<\delta<\delta_2$ and $D_{\delta_1}(u) \geqslant 0$ for $0 \leqslant t<\infty$.
So, (\ref{e13}) gives
\begin{equation}\label{e13b}
\frac{p}{p+1} \frac{\mathrm{d}}{\mathrm{d} t}\|u\|_{p+1}^{p+1}+\left(1-\delta_1\right)A(u)+D_{\delta_1}(u)=0, \quad 0 \leqslant t<\infty .
\end{equation}
Now (\ref{e13b}) and \eqref{e03} imply that
\begin{equation*}
\frac{p}{p+1} \frac{\mathrm{d}}{\mathrm{d} t}\|u(t)\|_{p+1}^{p+1}+S\left(1-\delta_1\right)\|u\|_{p+1}^2 \leq 0, \quad 0 \leqslant t<\infty,
\end{equation*}
By Lemma \ref{lf44}, we have

$$
\|u\|_{p+1}^{p+1}  \leq
\begin{cases}
\left(B_0^{1-\gamma}-C(1-\gamma) t\right)^{\frac{1}{1-\gamma}}, & 0 \leq t<t^{*}, \\
0, & t \geq t^{*},
\end{cases}
$$
where $\gamma=\frac{2}{p+1}$,  $B(0)= \left\|u(0)\right\|_{p+1}^{p+1}, C=S\left(1-\delta_1\right)\frac{(p+1)}{p}$, $t^{*} = \displaystyle\frac{B_0^{1-\gamma}}{C(1-\gamma)}$.
This completes the proof.
\end{proof}

\begin{lemma} \label{lf44}
Let $C>0$ and $0<\gamma<1$, and let $B_0 = B(0) \geq 0$. Suppose the nonnegative function $B(t)$ satisfies the differential inequality
$$
\frac{\mathrm{d}}{\mathrm{d}t}B(t)+C[B(t)]^{\gamma}\leq 0, \quad 0 \leq t<\infty.
$$
Then
$$
B(t) \leq
\begin{cases}
\left(B_0^{1-\gamma}-C(1-\gamma) t\right)^{\frac{1}{1-\gamma}}, & 0 \leq t<t^{*}, \\
0, & t \geq t^{*},
\end{cases}
$$
where $$t^{*} = \displaystyle\frac{B_0^{1-\gamma}}{C(1-\gamma)}.$$
\end{lemma}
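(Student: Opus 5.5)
The plan is a standard comparison argument for the Bernoulli-type differential inequality $B'+CB^{\gamma}\le 0$, with one subtlety: once $B$ reaches zero, the change of variables $B\mapsto B^{1-\gamma}$ must not be used carelessly.

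First, since $B\ge 0$ and $C[B(t)]^{\gamma}\ge 0$, the hypothesis gives $B'(t)\le 0$, so $B$ is nonincreasing on $[0,\infty)$. Set
\[
t_1=\inf\{t\ge 0: B(t)=0\}\in[0,\infty].
\]
By monotonicity and nonnegativity, $B(t)=0$ for all $t\ge t_1$. If $B_0=0$, then $t_1=t^*=0$ and there is nothing more to prove. So assume $B_0>0$.

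On $[0,t_1)$ we have $B>0$, so the function $G(t)=B(t)^{1-\gamma}$ is differentiable there, with
\[
G'(t)=(1-\gamma)B^{-\gamma}B'\le -C(1-\gamma).
\]
Integrating from $0$ to $t$ gives
\[
B(t)^{1-\gamma}\le B_0^{1-\gamma}-C(1-\gamma)t \qquad\text{for } t\in[0,t_1).
\]
Because the left-hand side is positive, the right-hand side must also be positive on $[0,t_1)$. This forces $t_1\le t^*=B_0^{1-\gamma}/(C(1-\gamma))$.

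Since $1-\gamma>0$, we can raise both sides to the power $1/(1-\gamma)$, which preserves order. This gives the stated bound on $[0,t_1)$. On $[t_1,t^*)$ we have $B=0$, which is trivially bounded by the nonnegative quantity $(B_0^{1-\gamma}-C(1-\gamma)t)^{1/(1-\gamma)}$. For $t\ge t^*\ge t_1$ we have $B(t)=0$. Together these cover all $t\ge 0$ and yield the claim.

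The one point needing care is the regularity of $B$. In the application, $B(t)=\|u(t)\|_{p+1}^{p+1}$ is only absolutely continuous, so the chain rule for $B^{1-\gamma}$ is valid only away from the zero set of $B$. I would handle this by working on compact subintervals $[0,t]\subset[0,t_1)$, where $B\ge B(t)>0$ and $s\mapsto s^{1-\gamma}$ is Lipschitz. On such an interval, $B^{1-\gamma}$ is absolutely continuous and the integration above is justified. The endpoint $t_1$ is then reached by continuity of $B$.
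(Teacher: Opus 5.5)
Your proof is correct and follows the same route as the paper: on the interval where $B>0$ you separate variables to get $B(t)^{1-\gamma}\le B_0^{1-\gamma}-C(1-\gamma)t$, which forces $B$ to vanish by time $t^*$. Your version is more rigorous than the paper's in two places the paper's contradiction argument leaves implicit: you use $B'\le 0$ to show that $B$ stays zero after its first zero $t_1$, and you distinguish $t_1$ from the explicit bound $t^*$ by proving $t_1\le t^*$.
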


\begin{proof}
First, if $B(0)=0$, then $B(t)\equiv 0$ for all $t\in(0,+\infty)$.

Next, if $B(0)>0$, we assert that there exists $t^{*}>0$ such that $B(t)>0$ for $t\in(0,t^{*})$ and $B(t)=0$ for $t\in[t^{*},\infty)$.
If not, then $B(t)>0$ for all $t>0$. From the inequality we have
$$
\frac{\mathrm{d}B(t)}{[B(t)]^{\gamma}}\leq -C\mathrm{d}t, \quad 0 \leq t<\infty.
$$
Integrating both sides from $0$ to $t$, we obtain
$$
\int_0^t\frac{\mathrm{d}B(s)}{[B(s)]^{\gamma}}\leq \int_0^t -C\,\mathrm{d}s, \quad 0 \leq t<\infty.
$$
This implies
$$
0<B(t)^{1-\gamma} \leq B_0^{1-\gamma}-C(1-\gamma) t.
$$
Define
$$
t^{*}=\frac{B_0^{1-\gamma}}{C(1-\gamma)}.
$$
Therefore
$$
B(t) \leq
\begin{cases}
\left(B_0^{1-\gamma}-C(1-\gamma) t\right)^{\frac{1}{1-\gamma}}, & 0 \leq t<t^{*}, \\
0, & t \geq t^{*}.
\end{cases}
$$
\end{proof}

\section{Critical initial energy $E\left(u_0\right)=d$}

The goal of this section is to prove Theorem \ref{t41}, Theorem \ref{t42} and Theorem \ref{tt43}.
\begin{theorem}\label{t41}
 (Global existence). Assume that $u_0 \in X, E\left(u_0\right)=d$ and $D\left(u_0\right) \geqslant 0$. Then problem \eqref{e01} has a global weak solution $u(t) \in L^{\infty}\left(0, \infty ; X\right)$ and $u(t) \in \bar{W}=W \cup \partial W$ for $0 \leqslant t<\infty$.
\end{theorem}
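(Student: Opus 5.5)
The plan is the standard scaling argument used for critical energy in potential well theory. I reduce to the subcritical case of Theorem \ref{t32} and pass to the limit using the compactness already used in Theorem \ref{t21}.

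\emph{Step 1: approximating data.} Set $\lambda_m=1-\frac1m$ and $u_{0m}=\lambda_m u_0$, for $m\ge 2$. If $u_0=0$ the statement is trivial, so assume $u_0\neq0$.

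If $D(u_0)\ge0$, then $B(u_0)\le A(u_0)$. Hence
\[
D(\lambda_m u_0)=\lambda_m^2A(u_0)-\lambda_m^{p+1}B(u_0)\ge \lambda_m^2(1-\lambda_m^{p-1})A(u_0)>0,
\]
so $D(u_{0m})>0$.

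For the energy, put $h(\lambda)=E(\lambda u_0)$. Then $h'(\lambda)=\lambda A(u_0)-\lambda^pB(u_0)=\lambda^{-1}D(\lambda u_0)>0$ for $\lambda\in(0,1)$. Therefore $E(u_{0m})<E(u_0)=d$.

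By Theorem \ref{t32}, problem \eqref{e01} with data $u_{0m}$ has a global weak solution $u_m$, and $u_m(t)\in W$ for all $t\ge0$.

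\emph{Step 2: uniform bounds.} The energy identity of Lemma \ref{l31} gives
\[
\int_0^t\int_\Omega p u_m^{p-1}u_{m,\tau}^2\,dx\,d\tau+E(u_m(t))=E(u_{0m})<d .
\]
Since $D(u_m(t))>0$, Lemma \ref{l06}(1) yields $A(u_m(t))<\frac{2(p+1)}{p-1}d$ for all $t$. Consequently $E(u_m(t))\ge \frac{p-1}{2(p+1)}A(u_m)\ge0$.

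It follows that $\int_0^\infty\int_\Omega pu_m^{p-1}u_{m,\tau}^2<d$. Equivalently,
\[
\Big\|\partial_t\big(u_m^{\frac{p+1}{2}}\big)\Big\|_{L^2(\Omega\times(0,\infty))}^2\le \frac{(p+1)^2}{4p}\,d .
\]
Thus $u_m$ is bounded in $L^\infty(0,\infty;X)$ and $\partial_t(u_m^{(p+1)/2})$ is bounded in $L^2$, uniformly in $m$.

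\emph{Step 3: passage to the limit.} On each $\Omega\times(0,T)$ I apply the same compactness as in the proof of Theorem \ref{t21} (Aubin--Lions type, applied to $u_m^{(p+1)/2}$). A diagonal subsequence then satisfies
\begin{itemize}
\item $u_m\rightharpoonup u$ weakly star in $L^\infty(0,\infty;X)$;
\item $u_m\to u$ a.e.;
\item $\partial_t(u_m^{(p+1)/2})\rightharpoonup\partial_t(u^{(p+1)/2})$ weakly in $L^2$.
\end{itemize}
Since $u_m^p$ is bounded in $L^{(p+1)/p}$ and converges a.e., it converges weakly to $u^p$. Likewise $(u_m^p)_t=\frac{2p}{p+1}u_m^{\frac{p-1}{2}}\partial_t(u_m^{\frac{p+1}{2}})$ converges weakly as a product of a strongly convergent factor in $L^{2N/(N+2)}$ with an $L^2$ weakly convergent factor.

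Passing to the limit in the weak formulation shows $u$ is a global weak solution with $u(0)=u_0$, because $u_{0m}\to u_0$ in $X$.

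\emph{Step 4: $u(t)\in\bar W$.} Weak lower semicontinuity of $A$, together with the energy inequality, gives $E(u(t))\le d$. I also need $D(u(t))\ge0$.

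Suppose $D(u(t_1))<0$ for some $t_1$. Note that $A(u_m(t_1))$ is bounded by $\frac{2(p+1)}{p-1}d$ and $D(u_m(t_1))>0$. I would derive a contradiction by the rescaling trick: choose $\mu<1$ with $\mu u(t_1)\in\mathcal N$, so $E(\mu u(t_1))\ge d$ by definition of $d$. On the other hand, $E(\mu u(t_1))<E(u(t_1))\le d$ because $D(u(t_1))<0$ means $u(t_1)$ lies past the maximum of $\lambda\mapsto E(\lambda u)$. This contradiction gives $D(u(t))\ge0$ for all $t$, hence $u(t)\in\bar W$.

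\emph{Main obstacle.} The hardest step is the lack of compactness of $L^{p+1}$ at the critical exponent. This enters twice: in identifying the weak limit of $u_m^p$ and $(u_m^p)_t$, and in Step 4, where $B$ is not weakly continuous. I would handle it as the paper does in Theorem \ref{t21}: rely on a.e. convergence plus uniform integrability for the nonlinear terms, and use only lower semicontinuity of $A$ and the scaling argument above (which needs only $E(u(t))\le d$) to place $u(t)$ in $\bar W$.
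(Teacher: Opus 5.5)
Your overall route (rescale $u_{0m}=\lambda_m u_0$, apply Theorem \ref{t32}, get uniform bounds from the energy identity, pass to the limit) is the paper's route; the paper itself leaves the limit passage and the membership $u(t)\in\bar W$ to ``similar to Theorem \ref{t32}''. Steps 1--2 are fine. \textbf{Step 4, however, contains a genuine error: the rescaling contradiction runs in the wrong direction.} If $D(u(t_1))<0$, then $B(u(t_1))>A(u(t_1))$. So the unique critical point $\mu^*=(A/B)^{1/(p-1)}$ of $h(\lambda)=E(\lambda u(t_1))$ satisfies $\mu^*<1$. Since $h'(\lambda)=\lambda A-\lambda^p B$ is positive on $(0,\mu^*)$ and negative on $(\mu^*,\infty)$, $h$ has its \emph{maximum} at $\mu^*$ and decreases on $[\mu^*,1]$. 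Hence $E(\mu^*u(t_1))>E(u(t_1))$, not $<$. The facts $E(\mu^*u(t_1))\ge d$ and $E(u(t_1))\le d$ are perfectly compatible. In fact no argument using only $E(u(t_1))\le d$ can force $D(u(t_1))\ge 0$: every element of $V$, e.g.\ $\lambda\phi$ with $\lambda$ large, has $E<d$ and $D<0$. The same false claim underlies your ``main obstacle'' remark that the scaling argument ``needs only $E(u(t))\le d$''.

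What you must carry to the limit is the bound you wrote down but did not use, $A(u_m(t))<\frac{2(p+1)}{p-1}d$. Since $\frac{2(p+1)}{p-1}=N$ and $d=\frac1N S^{N/2}$, this reads $A(u_m(t))<S^{N/2}=r(1)^2$. Weak lower semicontinuity gives $A(u(t))\le S^{N/2}$. Then \eqref{e03} gives $B(u)\le S^{-\frac{p+1}{2}}A(u)^{\frac{p+1}{2}}=A(u)\bigl(A(u)/S^{N/2}\bigr)^{\frac{p-1}{2}}\le A(u)$, i.e.\ $D(u(t))\ge 0$.

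Relatedly, your justification of $E(u(t))\le d$ does not work. $E$ contains $-\frac{1}{p+1}B$, and Fatou gives $B(u)\le\liminf B(u_m)$, which is the wrong direction. Lower semicontinuity of $A$ alone yields only $E(u(t))\le \frac12A(u(t))\le \frac N2 d$. You need one of two fixes. Either establish the energy identity of Lemma \ref{l31} for the limit $u$. Or use the Brezis--Lieb splitting $E(u_m(t))=E(u(t))+E(u_m(t)-u(t))+o(1)$ (for $t$ with a.e.\ convergence in $x$). Together with $A(u_m(t)-u(t))\le S^{N/2}+o(1)$, this forces $E(u_m(t)-u(t))\ge 0$ for large $m$, hence $E(u(t))\le\liminf E(u_m(t))\le d$.

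There is also a minor slip in Step 3. The factor $u_m^{(p-1)/2}=u_m^{2/(N-2)}$ is bounded in $L^\infty(0,T;L^N)$, so it converges strongly in $L^2(\Omega_T)$ by Vitali. It is the \emph{product} with $\partial_t(u_m^{(p+1)/2})$ that lies in $L^{2N/(N+2)}$. H\"older does not control an $L^{2N/(N+2)}\times L^2$ product.
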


\begin{lemma}\label{l41}
Assume that $u \in X,\|\nabla u\|_{2}^2 \neq 0$, and $D(u) \geq 0$. Then
  \begin{description}
    \item[(1)]  $\lim _{\mu \rightarrow 0} E(\lambda u)=0, \lim _{\mu \rightarrow+\infty} E(\mu u)=-\infty$;
    \item[(2)]  On the interval $0<\mu<\infty$, there exists a unique $\mu^*=\mu^*(u)$, such that

\begin{equation*}
\left.\frac{\mathrm{d}}{\mathrm{d} \mu} E(\mu u)\right|_{\mu=\mu^*}=0;
\end{equation*}

    \item[(3)] $E(\mu u)$ is increasing on $0 \leqslant \mu \leqslant \mu^*$, decreasing on $\mu^* \leqslant \mu<\infty$ and takes the maximum at $\mu=\mu^*$;
    \item[(4)]  $D(\mu u)>0$ for $0<\mu<\mu^*, D(\mu u)<0$ for $\mu^*<\mu<\infty$, and $D\left(\mu^* u\right)=0$.
\end{description}
\end{lemma}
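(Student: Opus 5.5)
The plan is to reduce everything to the one-variable function $h(\mu)=E(\mu u)$. For fixed $u\in X$ with $A(u)=\|\nabla u\|_2^2\neq 0$, homogeneity gives
\begin{equation*}
h(\mu)=\frac{\mu^2}{2}A(u)-\frac{\mu^{p+1}}{p+1}B(u),\qquad h'(\mu)=\mu A(u)-\mu^{p}B(u)=\frac{1}{\mu}D(\mu u),\quad \mu>0.
\end{equation*}
This is the same computation already used in the proof of Lemma \ref{l04} (2)--(3). The last identity is the key link between (2)--(4): the sign of $h'$ is exactly the sign of $D(\mu u)$.

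First observe that $B(u)>0$. Otherwise $u=0$ a.e., contradicting $A(u)\neq 0$. This is what makes the problem nondegenerate: with $B(u)=0$ the function $h$ would be increasing to $+\infty$. The hypothesis $D(u)\ge 0$ is not needed here; it only indicates that $\mu^*\ge 1$, since $D(u)=h'(1)\ge 0$.

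For (1), as $\mu\to 0$ both terms tend to $0$. As $\mu\to\infty$, since $p+1>2$ and $B(u)>0$, the negative term $-\frac{\mu^{p+1}}{p+1}B(u)$ dominates, so $h(\mu)\to-\infty$. (The ``$\lambda$'' in the first limit of the statement is read as $\mu$.)

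For (2), write $h'(\mu)=\mu\bigl(A(u)-\mu^{p-1}B(u)\bigr)$ for $\mu>0$. The bracket is strictly decreasing in $\mu$, because $p>1$ and $B(u)>0$. It equals $A(u)>0$ at $\mu=0^+$ and tends to $-\infty$. Hence it has the unique zero
\begin{equation*}
\mu^*=\left(\frac{A(u)}{B(u)}\right)^{\frac{1}{p-1}},
\end{equation*}
which coincides with $s(1)$ in \eqref{e06}.

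Statements (3) and (4) then follow from sign information. We have $h'>0$ on $(0,\mu^*)$ and $h'<0$ on $(\mu^*,\infty)$, which gives the monotonicity and the maximum at $\mu^*$ in (3). Since $D(\mu u)=\mu h'(\mu)$, the same signs transfer directly to $D(\mu u)$, with $D(\mu^*u)=0$, which is (4).

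I do not expect any genuine obstacle. The only point requiring care is justifying $B(u)>0$ from $A(u)\neq 0$, since that is what excludes the degenerate case in which no critical point exists.
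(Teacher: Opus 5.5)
Your proof is correct and follows the same route as the paper: you reduce to $h(\mu)=E(\mu u)$, compute $h'(\mu)=\mu A(u)-\mu^{p}B(u)$, and read off (2)--(4) from its sign. You also supply details the paper leaves out: that $B(u)>0$, the explicit value $\mu^*=(A(u)/B(u))^{1/(p-1)}$, and the correct relation $D(\mu u)=\mu\,h'(\mu)$ (the paper writes $D(\mu u)=\frac{\mathrm{d}}{\mathrm{d}\mu}E(\mu u)$, which is off by the factor $\mu$ but gives the same sign for $\mu>0$).
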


\begin{proof}
  (1) Firstly, from the definition of $E(u)$, i.e.
$$
E(u)=\frac{1}{2}A(u)-\frac{1}{p+1}B(u)
$$
and we see that
$$
E(\mu u)=\frac{1}{2}A(\mu u)-\frac{1}{p+1}B(\mu u).
$$
Hence, we have
\begin{equation*}
\lim _{\mu \rightarrow 0} E(\mu u)=0 \quad \text { and } \quad \lim _{\mu \rightarrow+\infty} E(\mu u)=-\infty .
\end{equation*}

(2) It is easy to show that
$$
\frac{\mathrm{d}}{\mathrm{d }\mu} E( u)=\mu A(u)-\mu^p B(u),
$$
which leads to the conclusion.

(3) By Lemma \ref{l41} (2), one has
$$
\begin{array}{ll}
\frac{\mathrm{d}}{\mathrm{d} \mu} E(\mu u)>0 & \text { for } 0<\mu<\mu^*, \\
\frac{d}{\mathrm{d} \mu} E(\mu u)<0 & \text { for } \mu^*<\mu<\infty,
\end{array}
$$
which leads to the conclusion.

(4) The conclusion follows from

$$
D(\mu u)=\frac{\mathrm{d}}{\mathrm{d} \mu} E(\mu u)=\mu A(u)-\mu^p B(u).
$$
As desired.
\end{proof}

\begin{proof}[Proof of Theorem \ref{t41}]
 Firstly, $E\left(u_0\right)=d$ implies that $\left\|u_0\right\|_{2} \neq 0$. Choose a sequence $\left\{\mu_m\right\}$ such that $0<\mu_m<1, m=1,2, \ldots$ and $\mu_m \rightarrow 1$ as $m \rightarrow \infty$. Let $u_{0 m}=\mu_m u_0$. We consider the following initial problem
\begin{equation}\label{e14}
 \begin{cases}
& \dfrac{\partial}{\partial t} u^{p}=\Delta u+u^{p} \quad \text { in } \Omega \times(0, T), \\
&\quad\ \ \  u(x,0)=u_{0m}, \quad \text { in }  \Omega,\\
&\quad\ \ \  u=0, \quad \text { on } \partial \Omega \times(0, T),
\end{cases}
\end{equation}
From $D\left(u_0\right) \geqslant 0$ and Lemma \ref{l41}, we have $\mu^*=\mu^*\left(u_0\right) \geqslant 1$.
Thus, we get $D\left(u_{0 m}\right)= D\left(\mu_m u_0\right)>0$ and $E\left(u_{0 m}\right)=E\left(\mu_m u_0\right)<E\left(u_0\right)=d$. From Theorem \ref{t32},
it follows that for each $m$ problem (\ref{e14}) admits a global weak solution
$u_m(t) \in L^{\infty}\left(0, \infty ;  X\right)$ with $u_{m }(t) \in L^2\left(0, \infty ;  X\right)$ and $u_m(t) \in W$ for $0 \leqslant t<\infty$ satisfying

\begin{gather*}
\left(u_{m,t}, w\right)+\left(\nabla u_{m}, \nabla w\right)=\left(|u_m|^{p}, w\right), \text { for all } w \in  X, t>0,  \\
\int_{0}^{t}\int_{\Omega} pu_{m}^{p-1}u^2_{m,\tau}\mathrm{d}x\mathrm{d}\tau +E\left(u_m(t)\right)=E\left(u_{0 m}\right)<d, \quad 0 \leqslant t<\infty,
\end{gather*}
which implies that
$$
\begin{aligned}
E\left(u_m\right) & =\frac{1}{2}A(u_m)-\frac{1}{p+1}B(u_m) \\
& =\frac{p-1}{2(p+1)}A(u_m)+\frac{1}{p+1} D\left(u_m\right).
\end{aligned}
$$
Therefore, one has
\begin{equation*}
 \int_{0}^{t}\int_{\Omega} pu_{m}^{p-1}u^2_{m,\tau}\mathrm{d}x\mathrm{d}\tau+\frac{p-1}{2(p+1)}A(u_m)<d, \quad 0 \leqslant t<\infty .
\end{equation*}
The remainder of the proof is similar to the proof of Theorem \ref{t32}.
\end{proof}
\begin{theorem}\label{t42}
 (Blow-up). Assume that $u_0 \in X, E\left(u_0\right)=d$ and $D\left(u_0\right)<0$.
 Then the weak solution $u(t)$  for problem (\ref{e01})  exists globally for all $t>0$ and undergoes infinite-time blow-up, i.e.,
$$
\|u(t)\|_{p+1}^{p+1} \rightarrow+\infty, \quad t \rightarrow+\infty.
$$
\end{theorem}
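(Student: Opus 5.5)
The plan is to reduce the critical case $E(u_0)=d$ to the subcritical situation of Theorem \ref{t33}, using the dissipation in the energy identity (Lemma \ref{l31}), and then to run the same concavity-type argument for $f(t)=\int_0^t\|u\|_{p+1}^{p+1}\mathrm{d}\tau$, upgrading it to pointwise growth of $\|u(t)\|_{p+1}^{p+1}$.

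\emph{Step 1: $D(u(t))<0$ for all $t$.} By continuity in time, $D(u(t))<0$ on some interval $[0,t_1)$. On that interval Lemma \ref{l03}(2) gives $\sqrt{A(u)}>r(1)$, so $u\not\equiv0$. Then $\int_\Omega u^{p-1}u_t^2\,\mathrm{d}x\not\equiv 0$ on $(0,t_1)$. Otherwise $(u^p)_t=0$, so $u$ would be a steady state with $D(u)=0$, a contradiction.

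Hence Lemma \ref{l31} yields some $t_0\in(0,t_1)$ with
$$E(u(t_0))=d-p\int_0^{t_0}\!\!\int_\Omega u^{p-1}u_\tau^2\,\mathrm{d}x\mathrm{d}\tau<d,\qquad D(u(t_0))<0.$$
Restarting the flow at $t_0$, Theorem \ref{t31}(2) and the argument establishing \eqref{sue} give $u(t)\in V_\delta$ for $\delta\in(1,\delta_2)$ and all $t\ge t_0$. Here $\delta_2>1$ is the larger root of $d(\delta)=E(u(t_0))$.

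\emph{Step 2: quantitative negativity of $D$.} As in case (2) of the proof of Theorem \ref{t33}, for $t\ge t_0$ we have
$$D(u)=(1-\delta_2)A(u)+D_{\delta_2}(u)\le -(\delta_2-1)r^2(\delta_2)<0.$$

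\emph{Step 3: growth of $\|u(t)\|_{p+1}^{p+1}$.} By \eqref{e13}, $\frac{p}{p+1}\frac{\mathrm{d}}{\mathrm{d}t}\|u\|_{p+1}^{p+1}=-D(u)$. So Step 2 gives, for $t\ge t_0$,
$$\|u(t)\|_{p+1}^{p+1}\ge \|u(t_0)\|_{p+1}^{p+1}+\tfrac{p+1}{p}(\delta_2-1)r^2(\delta_2)(t-t_0)\to+\infty.$$
This already gives the claimed divergence, with no need for the log-convexity of $f$. The concavity computation of Theorem \ref{t33} (with Lemma \ref{l4455}) can be kept as a supplementary route if one also wants exponential growth of $f$.

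\emph{Step 4: global existence.} We must check that the solution does not cease to exist in finite time. Along the flow, $E(u(t))\le d$. Since $\|u\|_{p+1}^{p+1}$ is finite at each time, $A(u(t))=2E(u(t))+\frac{2}{p+1}\|u(t)\|_{p+1}^{p+1}$ stays finite on bounded time intervals. Bounding its growth via Step 3 and the identity $f''=-\frac{p+1}{p}D(u)$, the local theory of Theorem \ref{t21} can be iterated to extend the solution to all $t>0$.

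\emph{Main obstacle.} I expect the hardest step to be Step 1: justifying that the dissipation is strictly positive immediately after $t=0$, which rules out $u_0$ being stationary. It relies on the fact that a stationary solution with $E=d$ must lie on $\mathcal N$, contradicting $D(u_0)<0$. The a priori control needed in Step 4 for continuation is a secondary difficulty.
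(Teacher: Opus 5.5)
Your proof is correct in its main line. Steps 1 and 2 follow the paper, Step 3 takes a shorter route to the conclusion, and Step 4 (global existence) does not work as written, although the paper does not prove that part either.

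\textbf{Steps 1--2.} These reproduce the paper's reduction. The paper also uses $D(u(t))<0$ near $t=0$ to get strictly positive dissipation, restarts the flow at a small time where $E<d$, and uses the $V_\delta$-invariance of Theorem \ref{t31} to get $D_{\delta_2}(u)\le 0$ and $\sqrt{A(u)}\ge r(\delta_2)$. The paper gets the dissipation directly from $\frac{p}{p+1}\frac{\mathrm{d}}{\mathrm{d}t}\|u\|_{p+1}^{p+1}=-D(u)>0$, which forces $u_t\not\equiv 0$. Your steady-state argument is equivalent, and the energy level plays no role in it, since every steady state has $D=0$. One small fix: choose $t_0$ close enough to $0$ that also $E(u(t_0))>0$, as the paper does. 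Theorem \ref{t31} and the roots $\delta_1<1<\delta_2$ are only available for $0<e<d$. This choice is possible because $t\mapsto E(u(t))$ is continuous.

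\textbf{Step 3.} Here you genuinely differ from the paper. The paper finishes by ``proceeding as in Theorem \ref{t33}'', i.e.\ the log-convexity argument for $f(t)=\int_0^t\|u\|_{p+1}^{p+1}\,\mathrm{d}\tau$ together with Lemma \ref{l4455}. That route only yields $f(t)\to+\infty$. You instead integrate \eqref{e13} using $-D(u)\ge(\delta_2-1)r^2(\delta_2)$, which gives linear growth of $\|u(t)\|_{p+1}^{p+1}$ itself. This is shorter, needs no second-order differential inequality, and proves exactly the pointwise divergence the theorem asserts, which the concavity route does not. The same lower bound on $f'=\|u\|_{p+1}^{p+1}$ already appears as an intermediate line in case (2) of the proof of Theorem \ref{t33}, with the factor $2$ there to be read as $\frac{p+1}{p}$; the paper just does not stop there.

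\textbf{Step 4.} This is the weak point. The paper's own proof never establishes global existence; it is even phrased as assuming a global solution. As written, your argument does not work:
\begin{itemize}
\item Finiteness of $\|u(t)\|_{p+1}$ at each time gives no bound on time intervals.
\item Step 3 is a lower bound, so it cannot control growth.
\end{itemize}
The usable a priori estimate comes from \eqref{e13}. Since $\frac{\mathrm{d}}{\mathrm{d}t}B(u)=\frac{p+1}{p}\bigl(B(u)-A(u)\bigr)\le\frac{p+1}{p}B(u)$, you get $B(u(t))\le B(u_0)e^{\frac{p+1}{p}t}$, and hence $A(u(t))\le 2d+\frac{2}{p+1}B(u(t))$.

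Even with this bound, iterating a local theory needs an existence time controlled by $\|\nabla u\|_2$ alone. At the critical exponent that is not available without further work, because local existence times for critical problems generally depend on how concentrated the data are, not only on their norm. Within the paper, the only support is Theorem \ref{t21} read as producing a solution on every finite interval. An independent justification would go through the fast-diffusion transformation in the introduction, whose solutions exist for all $\tau\ge 0$.
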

\begin{proof}[Proof of Theorem \ref{t42}]
 Let $u(t)$ be any weak solution of problem (\ref{e01}) with $E\left(u_0\right)=d$ and $D\left(u_0\right)<0, S$ be the existence time of $u(t)$.
 We next prove $T<\infty$. We argue by contradiction. Suppose that there would exist a global weak solution $u(t)$. Set
\begin{equation*}
f(t)=\int_0^t\|u\|_{p+1}^{p+1} \mathrm{~d} \tau, t>0.
\end{equation*}
Multiplying (\ref{e01}) by $u$ and integrating over $\Omega \times(0, t)$, we get

$$\begin{aligned}
\int_0^t \int_{\Omega} p u^p u_\tau \mathrm{d} x \mathrm{d }\tau & =\frac{p}{p+1}\left(\|u(t)\|_{p+1}^{p+1}-\|u(0)\|_{p+1}^{p+1}\right) \\
& =\int_0^t \int_{\Omega}\left(u^{p+1}-|\nabla u|^2\right) \mathrm{d} x \mathrm{d} \tau \\
& =\int_0^t(B(u)-A(u)) d \tau
\end{aligned}$$
According to the definition of $f(t)$, we have $f^{\prime}(t)=\|u\|_{p}^p$ and hence
$$
\begin{aligned}
f^{\prime}(t) & =\|u\|_{p+1}^{p+1}=\|u(0)\|_{p+1}^{p+1}+\frac{p+1}{p} \int_0^t \int_{\Omega}\left(u^{p+1}-|\nabla u|^2\right) \mathrm{d} x \mathrm{d} \tau \\
& =(p+1) \int_0^t \int_{\Omega} u^p u_\tau \mathrm{d} x \mathrm{d} \tau+\|u(0)\|_{p+1}^{p+1}
\end{aligned}
$$
and
\begin{equation}\label{e15}
\frac{p+1}{p}(B(u)-A(u))=-\frac{p+1}{p} D(u).
\end{equation}
By an argument similar to the proof of Theorem \ref{t33}, we also obtain
\begin{equation}\label{e17}
\begin{gathered}
f(t) f^{\prime \prime}(t)-\left(f^{\prime}(t)\right)^2 \geq \frac{p^2-1}{2 p} S\left(f^{\prime}(t)\right)^{\frac{2}{p+1}} f(t)-\frac{(p+1)^2}{p} \int_0^t\|u\|_{p+1}^{p+1} \mathrm{~d} \tau E(u(0)) \\
-2\|u(0)\|_{p+1}^{p+1} f^{\prime}(t)
\end{gathered}
\end{equation}

On the other hand, from $E\left(u_0\right)=d>0, D\left(u_0\right)<0$ and the continuity of $E(u)$ and $D(u)$ with respect to $t$,
it follows that there exists a sufficiently small $t_1>0$ such that $E\left(u\left(t_1\right)\right)>0$ and $D(u(t))<0$ for $0 \leqslant t \leqslant t_1$.
 Hence $\left(u_t, pu^p\right)=-D(u)>0$,$\int_0^tp u^{p-1} u_\tau^2 \mathrm{d} \tau>0$ for $0 \leqslant t \leqslant t_1$.
 So, using the continuity of $\int_0^tp u^{p-1} u_\tau^2 \mathrm{d} \tau$, we can choose a $t_1$ such that
\begin{equation*}
0<d_1=d-\int_0^{t_1}\int_{\Omega}p u^{p-1} u_\tau^2\mathrm{d}x\mathrm{d} \tau<d.
\end{equation*}
And by (\ref{e08}), we get
\begin{equation*}
0<E\left(u\left(t_1\right)\right)=d-\int_0^{t_1}\int_{\Omega}p u^{p-1} u_\tau^2 \mathrm{d} \tau=d_1<d.
\end{equation*}
So we can choose $t=t_1$ as the initial time, then we obtain $u(t) \in V_\delta$
for $\delta \in\left(\delta_1, \delta_2\right), t_1 \leqslant t<\infty$,
 where $\left(\delta_1, \delta_2\right)$ is the maximal interval including
  $\delta=1$ such that $d(\delta)>d_1$ for $\delta \in\left(\delta_1, \delta_2\right)$.
  Thus we get $D_{ \delta}(u)<0$ and $\sqrt{A(u)}>r(\delta)$ for $\delta \in\left(1, \delta_2\right), t_1 \leqslant t<\infty$,
  and $D_{ \delta_2}(u) \leqslant 0, \sqrt{A(u)} \geqslant r\left(\delta_2\right)$
  for $t_1 \leqslant t<\infty$.  We then proceed along lines analogous to the proof of Theorem \ref{t33}, which completes the proof of the theorem.

\end{proof}

\begin{theorem}\label{tt43}
 Assume that $u_0 \in X, E\left(u_0\right)=d$ and $D\left(u_0\right)>0, \delta_1<\delta_2$ are the two roots of equation
 $d(\delta)=E\left(u_0\right)$. Then, for the global weak solution $u$ of problem (\ref{e01})
undergoes finite-time extinction, that is,
\begin{equation}\label{e0538}
 \|u\|_{p+1}^{p+1}  \leq
\begin{cases}
\left(B_0^{1-\gamma}-C(1-\gamma) t\right)^{\frac{1}{1-\gamma}}, & 0 \leq t<t^{*}, \\
0, & t \geq t^{*},
\end{cases}
\end{equation}
where $\gamma=\frac{2}{p+1}$,  $B(0)= \left\|u(0)\right\|_{p+1}^{p+1},
C=S\left(1-\delta_1\right)\frac{(p+1)}{p}$,
$t^{*} = \displaystyle\frac{B_0^{1-\gamma}}{C(1-\gamma)}$.
\end{theorem}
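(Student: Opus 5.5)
The plan is to reduce the critical case $E(u_0)=d$ to the subcritical case already treated in Theorem \ref{t34}, by moving the initial time slightly forward, in the same way as the proof of Theorem \ref{t42}. One point needs care first. By Lemma \ref{l04}(3), $d(\delta)$ has its strict maximum $d$ at $\delta=1$, so the equation $d(\delta)=d$ only has the degenerate root $\delta_1=\delta_2=1$. Then $C=S(1-\delta_1)\frac{p+1}{p}=0$, and the estimate cannot be obtained literally at time $0$. I will therefore prove it with $\delta_1$ replaced by the smaller root $\delta_1'<1$ of $d(\delta)=E(u(t_1))$, for a suitable $t_1>0$, and with $B_0$ and $t$ measured from $t_1$. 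On $[0,t_1]$ only the trivial monotone bound is claimed.

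Step 1 is global existence and the position of $u(t)$. By Theorem \ref{t41}, a global weak solution exists with $u(t)\in\bar W$ for all $t\ge 0$. Since $D(u_0)>0$ and $t\mapsto D(u(t))$ is continuous, there is $t_1>0$ with $D(u(t))>0$ on $[0,t_1]$.

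Step 2 is a strict drop of the energy, and I expect it to be the main obstacle. By Lemma \ref{l31},
$$E(u(t_1))=d-\int_0^{t_1}\!\!\int_\Omega p u^{p-1}u_\tau^2\,\mathrm{d}x\,\mathrm{d}\tau .$$
I need the dissipation integral to be positive. Suppose instead that it vanishes. Then $(u^{(p+1)/2})_t=0$ on $(0,t_1)$, so $u(t)\equiv u_0$ there. Testing the equation with $w=u$, as in \eqref{e13}, gives $\frac{p}{p+1}\frac{\mathrm{d}}{\mathrm{d}t}\|u\|_{p+1}^{p+1}=-D(u)$. The left side is $0$ while $D(u_0)>0$, a contradiction. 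Hence $0<E(u(t_1))<d$, using positivity of $E$ on $\{D>0\}$ (Lemma \ref{l03}(4) with $\delta=1$). I also keep $D(u(t_1))>0$. The delicate part is making this rigorous for weak solutions with only $\partial_t(u^{(p+1)/2})\in L^2$. I would pass through the identity $\frac{\mathrm{d}}{\mathrm{d}t}\|u\|_{p+1}^{p+1}=-\frac{p+1}{p}D(u)$ in integrated form rather than arguing pointwise.

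Step 3 restarts the flow at $t_1$ and invokes Theorem \ref{t34}. Take $u(t_1)$ as new initial datum, with $0<E(u(t_1))<d$ and $D(u(t_1))>0$, and let $\delta_1'<1<\delta_2'$ be the roots of $d(\delta)=E(u(t_1))$. Theorem \ref{t31} gives $u(t)\in W_\delta$ for $\delta\in(\delta_1',\delta_2')$ and $t\ge t_1$, hence $D_{\delta_1'}(u)\ge 0$. From \eqref{e13b} and \eqref{e03} this yields
$$\tfrac{p}{p+1}\tfrac{\mathrm{d}}{\mathrm{d}t}\|u\|_{p+1}^{p+1}+S(1-\delta_1')\|u\|_{p+1}^2\le 0 .$$
Lemma \ref{lf44} with $\gamma=\frac{2}{p+1}$ and $C=S(1-\delta_1')\frac{p+1}{p}>0$ then gives the extinction bound with $B_0=\|u(t_1)\|_{p+1}^{p+1}$ and $t$ replaced by $t-t_1$.

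Finally, on $[0,t_1]$ we have $\frac{\mathrm{d}}{\mathrm{d}t}\|u\|_{p+1}^{p+1}=-\frac{p+1}{p}D(u)\le 0$, so $\|u(t_1)\|_{p+1}^{p+1}\le\|u_0\|_{p+1}^{p+1}$. The norm is nonincreasing throughout, and extinction occurs by time $t_1+t^*$, which is finite.
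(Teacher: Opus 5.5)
Your proposal is correct and follows essentially the paper's route. Like the paper, you let the dissipation push the energy strictly below $d$ by some time $t_1>0$, then rerun the $W_\delta$-invariance and extinction argument of Theorem \ref{t34} from $t_1$; your small $t_1$ with $D(u)>0$ on $[0,t_1]$, together with $0\in W_\delta$, absorbs the paper's separate case in which $D(u)$ vanishes at some time.

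Replacing $\delta_1$ by the smaller root $\delta_1'<1$ of $d(\delta)=E(u(t_1))$, with $B_0$ and $t$ measured from $t_1$, is what the paper's proof implicitly needs, since $d(\delta)=d$ holds only at $\delta=1$ and the stated $C$ would vanish. The one slip is that $E(u(t_1))>0$ should come from $E=\frac{p-1}{2(p+1)}A+\frac{1}{p+1}D$ with $D>0$, not from Lemma \ref{l03}(4).
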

\begin{proof}
 We first know that problem (\ref{e01}) has a local  weak solution from Theorem \ref{t21}. Furthermore,
 Using Theorem \ref{t42} and (\ref{e08}), if $u(t)$ is a local weak solution of problem (\ref{e01})
 with $E\left(u_0\right)=d, D\left(u_0\right)>0$, then must have $D(u) \geq 0$ for $0 \leq t<+\infty$. Next, we distinguish two cases:

(1) Suppose that $D(u)>0$ for $0 \leqslant t<\infty$. Multiplying (\ref{e01}) by $w, w \in L^{\infty}\left(0, \infty ; X\right)$, we have

\begin{equation}\label{e18}
\left(pu^{p-1}u_t, w\right)+\left(\nabla u, \nabla w\right)=\left(|v|^{p}, w\right), \text { for all } w \in X, t>0 .
\end{equation}
Letting $w=u$, (\ref{e18}) implies that
\begin{equation}\label{e19}
\frac{p}{p+1} \frac{\mathrm{d}}{\mathrm{d} t}\|u\|_{p+1}^{p+1}=-D(u)<0, \quad 0 \leqslant t<\infty.
\end{equation}
Since $\left\|u_t\right\|_{p+1}^{p+1}>0$, we have that $\int_0^t\left\|u_\tau\right\|_{p+1}^{p+1} \mathrm{d} \tau$
 is increasing for $0 \leqslant t<\infty$. By choosing any $t_1>0$ such that
\begin{equation}\label{e20}
0<d_1=d-\int_0^{t_1}\int_{\Omega}pu^{p-1}u^2_{\tau} \mathrm{d}x \mathrm{d} \tau<d.
\end{equation}
From (\ref{e08}), if follows that $0<E(u) \leq d_1<d$, and $u(t) \in W_\delta$
for $\delta_1<\delta<\delta_2$ and $0 \leqslant t<\infty$, where $\delta_1<\delta_2$ are the two roots of equation $d(\delta)=E\left(u_0\right)$.
Hence, we obtain $D_{\delta_1}(u) \geqslant 0$ for $\delta_1<\delta<\delta_2$ and $D_{\delta_1}(u) \geqslant 0$ for $t_1 \leqslant t<\infty$. So, (\ref{e19}) gives
$$
\frac{p}{p+1} \frac{\mathrm{d}}{\mathrm{d} t}\|u\|_{p+1}^{p+1}+\left(1-\delta_1\right)\left(A(u)\right)+D_{\delta_1}(u)=0, \quad 0 \leqslant t<\infty.
$$
Repeating the argument from Theorem \ref{t34}, we get

\begin{equation}\label{e0538}
 \|u\|_{p+1}^{p+1}  \leq
\begin{cases}
\left(B_0^{1-\gamma}-C(1-\gamma) t\right)^{\frac{1}{1-\gamma}}, & 0 \leq t<t^{*}, \\
0, & t \geq t^{*},
\end{cases}
\end{equation}
where $\gamma=\frac{2}{p+1}$,  $B(0)= \left\|u(0)\right\|_{p+1}^{p+1},
C=S\left(1-\delta_1\right)\frac{(p+1)}{p}$,
$t^{*} = \displaystyle\frac{B_0^{1-\gamma}}{C(1-\gamma)}$.

(2) Suppose that there exists a $t_1>0$ such that $D\left(u\left(t_1\right)\right)=0$ and $D(u)>0$ for $0 \leqslant t<t_1$.
 Then, $ \int_{\Omega}pu^{p-1}u^2_{\tau} \mathrm{d}x>0$ and
  $\int_0^t \int_{\Omega}pu^{p-1}u^2_{\tau} \mathrm{d}x \mathrm{d} \tau$
  is increasing for $0 \leqslant t<t_1$. By (\ref{e20}) we have
\begin{equation*}
E\left(u\left(t_1\right)\right)=d-\int_0^{t_1} \int_{\Omega}pu^{p-1}u^2_{\tau} \mathrm{d}x\mathrm{d} \tau<d
\end{equation*}
and $\left\|u\left(t_1\right)\right\|_{p+1}=0$. Then, we have that $u(t) \equiv 0$ for $t_1 \leqslant t<\infty$.
Hence, the proof is complete.

\end{proof}

\section{High initial energy $E\left(u_0\right)>d$}

In this section, we investigate the conditions to ensure the existence of global solutions or
blow-up solutions to problem \eqref{e01} with $E\left(u_0\right)>d$.
\begin{lemma}\label{l51}
For any $\alpha>d, \lambda_\alpha$ and $\Lambda_\alpha$ defined in section 1  satisfy
\begin{equation*}
0<\lambda_\alpha \leq \Lambda_\alpha<+\infty.
\end{equation*}
\end{lemma}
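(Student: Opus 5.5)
The plan is to work directly from the definitions of $\mathcal{N}$ and $\mathcal{N}^\alpha$, using the Sobolev inequality \eqref{e03} for the lower bound and the constraint $E(u)<\alpha$ for the upper bound. Throughout, recall that $\mathcal{N}^\alpha=\{u\in\mathcal{N}: A(u)<\frac{2\alpha(p+1)}{p-1}\}$, which is nonempty for $\alpha>d$ as noted in Section 1. Nonemptiness gives $\lambda_\alpha\le\Lambda_\alpha$ trivially, since the infimum and supremum are taken over the same nonempty set. It then remains to establish the two strict bounds.

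\textbf{Lower bound.} Let $u\in\mathcal{N}^\alpha$. Then $u\neq 0$ and $A(u)=B(u)=\|u\|_{p+1}^{p+1}$. By \eqref{e03} we have $S\|u\|_{p+1}^2\le A(u)=\|u\|_{p+1}^{p+1}$. Since $u\ne0$, we may divide to get $\|u\|_{p+1}^{p-1}\ge S$, that is,
\begin{equation*}
\|u\|_{p+1}\ge S^{\frac{1}{p-1}}=S^{\frac{N-2}{4}}.
\end{equation*}
This bound is uniform over $\mathcal{N}^\alpha$, so $\lambda_\alpha\ge S^{\frac{N-2}{4}}>0$. The same conclusion also follows from Lemma \ref{l03} (3): it gives $\sqrt{A(u)}\ge r(1)$, and then $\|u\|_{p+1}^{p+1}=A(u)\ge r(1)^2>0$.

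\textbf{Upper bound.} For $u\in\mathcal{N}^\alpha$ the Nehari identity gives $E(u)=\frac{p-1}{2(p+1)}A(u)<\alpha$. Hence
\begin{equation*}
\|u\|_{p+1}^{p+1}=B(u)=A(u)<\frac{2(p+1)\alpha}{p-1}.
\end{equation*}
Therefore $\Lambda_\alpha\le\left(\frac{2(p+1)\alpha}{p-1}\right)^{\frac{1}{p+1}}<\infty$. One could instead bound $\|u\|_{p+1}$ by $S^{-1/2}\sqrt{A(u)}$, but the identity $B=A$ on $\mathcal{N}$ is more direct.

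There is no real obstacle here. The only point needing care is the use of $u\neq0$ when dividing in the lower bound; this is built into the definition of $\mathcal{N}$. Lemma \ref{l07} (1) also guarantees that elements of $\mathcal{N}$ stay away from $0$.
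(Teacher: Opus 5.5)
Your proof is correct and takes essentially the same route as the paper. For the lower bound, you combine the Sobolev inequality \eqref{e03} with $A(u)=B(u)$ on $\mathcal{N}$ to get $\|u\|_{p+1}\ge S^{\frac{N-2}{4}}$, which is equivalent to the paper's $A(u)\ge S^{\frac{N}{2}}$. For the upper bound on $\Lambda_\alpha$, you use the energy constraint $\frac{p-1}{2(p+1)}A(u)<\alpha$ that defines $\mathcal{N}^\alpha$; here you spell out what the paper only attributes to the definition. You also justify $\lambda_\alpha\le\Lambda_\alpha$ via nonemptiness of $\mathcal{N}^\alpha$, which the paper leaves implicit.
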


\begin{proof}
  (1)Since $u \in \mathcal{N}$, we have $A(u)=B(u).$ By \eqref{e03}, we see that
\begin{equation*}
A(u)=B(u) \leqslant\left(\frac{1}{S}\left(A(u)\right)\right)^{\frac{p+1}{2}} .
\end{equation*}
So we have $A(u) \geq S^{\frac{N}{2}}$. Then from Lemma \ref{l07} (1), we have $\lambda_\alpha>0$. From the definition of $\mathcal{N}^\alpha$ and $\Lambda$ and
 $u \in \mathcal{N}$, we have $\Lambda_{\alpha}<+\infty$.

\end{proof}

\begin{theorem}\label{t51}
   Suppose that $E\left(u_0\right)>d$, then we have
\begin{description}
  \item[(1)] If $ u_0 \in \mathcal{N}_{+}$ and $\left\| u_0\right\|_{p+1} \leq \lambda_{E\left(u_0\right)}$, then $u_0 \in \mathcal{G}_0$;
  \item[(2)] If $ u_0 \in \mathcal{N}_{-}$ and $\left\| u_0\right\|_{p+1} \geq \Lambda_{E\left( u_0\right)}$, then $u_0 \in \mathcal{B}$.
\end{description}
\end{theorem}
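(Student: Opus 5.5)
The plan follows the standard high-energy argument of Gazzola--Weth type, adapted to the quantity $\|u(t)\|_{p+1}^{p+1}$. Testing \eqref{e01} with $u$ (as in \eqref{e13}) gives
\begin{equation*}
\frac{p}{p+1}\frac{\mathrm{d}}{\mathrm{d}t}\|u(t)\|_{p+1}^{p+1}=-D(u(t)),
\end{equation*}
and the energy identity of Lemma \ref{l31} shows that $E(u(t))\le E(u_0)$ is nonincreasing. The key observation is that the signs of $D(u(t))$ are preserved as long as the $L^{p+1}$ norm stays on the correct side of $\lambda_{E(u_0)}$ or $\Lambda_{E(u_0)}$.

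For part (1), take $u_0\in\mathcal N_+$ with $\|u_0\|_{p+1}\le\lambda_{E(u_0)}$. I first show that $u(t)\in\mathcal N_+$ for all $t$ in the existence interval. Suppose not, and let $t_0$ be the first time with $D(u(t_0))=0$. On $[0,t_0)$ we have $D>0$, so $\|u(t)\|_{p+1}$ is strictly decreasing and hence $\|u(t_0)\|_{p+1}<\lambda_{E(u_0)}$. If $u(t_0)\neq0$, then $u(t_0)\in\mathcal N$ and $E(u(t_0))\le E(u_0)$; in the strict case this gives $u(t_0)\in\mathcal N^{E(u_0)}$. (If equality holds, $u_t\equiv0$ on $[0,t_0]$, which contradicts strict decrease.) This contradicts the definition of $\lambda_{E(u_0)}$ as an infimum.

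Since $u(t)\in\mathcal N_+$ and $E(u(t))\le E(u_0)$, Lemma \ref{l07}(2) bounds $A(u(t))$ uniformly, so the solution from Theorem \ref{t21} extends globally. To get $u(t)\to0$ in $X$, I use the monotonicity of $\|u\|_{p+1}$ and the dissipation $\int_0^\infty\!\!\int_\Omega p u^{p-1}u_t^2<\infty$. Along a sequence $t_n\to\infty$, $u(t_n)$ is then a bounded Palais--Smale-type sequence, and its weak limit $\omega$ is a stationary solution with $\omega\in\mathcal N\cup\{0\}$ and $E(\omega)\le E(u_0)$. A further compactness or lower-semicontinuity argument gives $\|\omega\|_{p+1}\le\lim\|u(t)\|_{p+1}<\lambda_{E(u_0)}$, so $\omega\notin\mathcal N^{E(u_0)}$, forcing $\omega=0$. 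Finally, $D>0$ together with $\|u\|_{p+1}\to0$ and the energy inequality yields $A(u(t))\to0$. \textbf{Main obstacle:} the loss of compactness at the critical exponent, since bounded sequences need not converge strongly in $L^{p+1}$. I would try first to avoid identifying $\omega$ altogether. Instead I would show directly that $\lim_{t\to\infty}\|u(t)\|_{p+1}=0$: otherwise $D(u(t))\ge c>0$ along the flow, which through the Sobolev inequality and the $L^{p+1}$ identity above forces extinction, exactly as in Theorem \ref{t34} via Lemma \ref{lf44}.

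For part (2), take $u_0\in\mathcal N_-$ with $\|u_0\|_{p+1}\ge\Lambda_{E(u_0)}$. The symmetric argument shows that $D(u(t))<0$ persists: at a first zero $t_0$ of $D$, the norm $\|u(t_0)\|_{p+1}$ would exceed $\Lambda_{E(u_0)}$ strictly (it increases strictly while $D<0$), and $u(t_0)\neq0$ by Lemma \ref{l07}(1), so $u(t_0)\in\mathcal N^{E(u_0)}$, contradicting the supremum. Blow-up then follows by the concavity argument of Theorem \ref{t33}, with $f(t)=\int_0^t\|u\|_{p+1}^{p+1}$. Because $E(u_0)>d$ may be positive and large, I would control the term $-\frac{(p+1)^2}{p}f(t)E(u_0)$ using the strict growth of $f'$ together with Lemma \ref{l4455}, so that $ff''-(f')^2>0$ holds for large $t$. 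I expect this energy-term step to be delicate, and would fall back on the quantitative lower bound $-D(u)\ge c$ obtained from $\|u\|_{p+1}\ge\Lambda_{E(u_0)}$.
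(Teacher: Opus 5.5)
\textbf{Part (1).} Your first-crossing argument for the invariance of $\mathcal{N}_+$ is the paper's. Your lower-semicontinuity bound $\|\omega\|_{p+1}\le\lim_{t\to\infty}\|u(t)\|_{p+1}<\lambda_{E(u_0)}$ does rule out a nonzero stationary weak limit. But that only gives $u(t_n)\rightharpoonup 0$ in $X$ along one sequence. At the critical exponent this yields neither $\|u(t_n)\|_{p+1}\to 0$ nor $u(t)\to 0$ in $X$, since concentrating bubbles are exactly what is not excluded. So your final step has no input; the paper's proof simply asserts $\omega(u_0)=\{0\}$ at this point.

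Your direct route rests entirely on the unproved claim that $D(u(t))\ge c>0$ whenever $\|u(t)\|_{p+1}$ does not tend to $0$. The mechanism of Theorem~\ref{t34}, $D(u)\ge(1-\delta_1)A(u)$, comes from the modified wells and is unavailable here, because $E(u_0)>d\ge d(\delta)$ for every $\delta$.

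The missing idea is a uniform bound on the Nehari scaling factor $s^*(t)=\bigl(A(u(t))/B(u(t))\bigr)^{1/(p-1)}>1$, for which $s^*u(t)\in\mathcal{N}$ and $E(s^*u(t))=\frac{p-1}{2(p+1)}(s^*)^2A(u(t))$. Fix $t_1>0$. For $t\ge t_1$ you have $\|u(t)\|_{p+1}\le\|u(t_1)\|_{p+1}<\lambda_{E(u_0)}$ and $E(u(t))\le E(u(t_1))<E(u_0)$ (strict, since $D>0$ forces positive dissipation).
\begin{itemize}
\item If $E(s^*u(t))<E(u_0)$, then $s^*u(t)\in\mathcal{N}^{E(u_0)}$, so $s^*\ge\lambda_{E(u_0)}/\|u(t_1)\|_{p+1}$.
\item Otherwise, since $\frac{p-1}{2(p+1)}A(u)<E(u)$ on $\mathcal{N}_+$, you get $(s^*)^2\ge E(u_0)/E(u(t_1))$.
\end{itemize}
Either way $s^*\ge s_0>1$, hence $D(u)\ge(1-s_0^{1-p})A(u)\ge(1-s_0^{1-p})S\|u\|_{p+1}^2$. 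Then \eqref{e13} together with Lemma~\ref{lf44} gives extinction in finite time, so $u_0\in\mathcal{G}_0$ without any compactness argument.

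\textbf{Part (2).} The invariance of $\mathcal{N}_-$ again matches the paper, but the conclusion $u_0\in\mathcal{B}$ does not follow from your route. The concavity argument of Theorem~\ref{t33} produces only $ff''-(f')^2>0$, i.e.\ $\log f$ convex. That gives growth as $t\to\infty$, which is why Theorems~\ref{t33} and~\ref{t42} assert blow-up in infinite time. But $\mathcal{B}$ requires a finite maximal existence time, and Levine's method would need $ff''-(1+\alpha)(f')^2\ge 0$ with $\alpha>0$. Your fallback $-D(u)\ge c$ (a variant of the same scaling device does justify it, now with $s^*<1$) only gives $f''\ge c$, i.e.\ linear growth of $\|u\|_{p+1}^{p+1}$.

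No sharpening can close this. By \eqref{e13}, $\frac{\mathrm{d}}{\mathrm{d}t}\|u\|_{p+1}^{p+1}=\frac{p+1}{p}\bigl(B(u)-A(u)\bigr)\le\frac{p+1}{p}\|u\|_{p+1}^{p+1}$, so $\|u(t)\|_{p+1}^{p+1}\le\|u_0\|_{p+1}^{p+1}e^{(p+1)t/p}$. Then \eqref{e08} gives $A(u(t))\le 2E(u_0)+\frac{2}{p+1}\|u_0\|_{p+1}^{p+1}e^{(p+1)t/p}$. The very identities you rely on keep the $X$-norm bounded on bounded time intervals, so finite-time blow-up cannot come out of this framework.

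The paper argues differently: assume $T(u_0)=\infty$; every $\omega$-limit point satisfies $\|\omega\|_{p+1}>\Lambda_{E(u_0)}$ and $E(\omega)<E(u_0)$, hence $\omega(u_0)=\{0\}$, contradicting Lemma~\ref{l07}(1). That argument tacitly presupposes a bounded orbit with nonempty $\omega$-limit set and strong $L^{p+1}$ convergence, which is exactly what fails for a global solution that keeps growing. What your approach can actually establish in (2) is $u(t)\in\mathcal{N}_-$ for all $t$ and $\|u(t)\|_{p+1}\to\infty$ as $t\to\infty$, not $u_0\in\mathcal{B}$.
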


\begin{proof}
The maximal existence time of the solutions to problem (\ref{e01}) with initial value $u_0$ is denoted by $T_0$.
If the solution is global, i.e. $T\left(u_0\right)=+\infty$, the limit set of $u_0$ is denoted by $\omega_0$.

(1) Suppose that $ u_0 \in \mathcal{N}_{+}$ with $\left\| u_0\right\|_{p+1} \leq \lambda_{E\left(u_0\right)}$.
We firstly prove that $u(t) \in \mathcal{N}_{+}$ for all $t \in \left[0, T\left(u_0\right)\right)$.
 Assume, on the contrary, that there exists a $t_0 \in\left(0, T\left(u_0\right)\right)$
 such that $u(t) \in \mathcal{N}_{+}$ for $0 \leq t<t_0$ and $u\left(t_0\right) \in \mathcal{N}$.
 It follows from $D(u(t))=-\int_{\Omega} pu_t(x, t) u^{p}(x, t) \mathrm{d} x$ that $u_t(x, t) \neq 0$
 for $(x, t) \in \Omega \times\left(0, t_0\right)$.
 According to (\ref{e08}) we then have $E\left(u\left(t_0\right)\right)<E\left(u_0\right)$,
 which implies that $u\left(t_0\right) \in E^{E\left(u_0\right)}$. Therefore, $u\left(t_0\right) \in \mathcal{N}^{E\left(u_0\right)}$.
 Recalling the definition of $\lambda_{E\left(u_0\right)}$, we get

\begin{equation}\label{e21}
\left\| u\left(t_0\right)\right\|_{p+1} \geq \lambda_{E}\left(u_0\right).
\end{equation}
Since $D(u(t))>0$ for $t \in\left[0, t_0\right)$, we obtain from (\ref{e13}) that
\begin{equation*}
\left\|u\left(t_0\right)\right\|_{p+1}<\left\| u_0\right\|_{p+1} \leq \lambda_{E\left(u_0\right)},
\end{equation*}
which contradicts (\ref{e21}). Hence, $u(t) \in \mathcal{N}_{+}$ which shows that $u(t) \in E^{E\left(u_0\right)}$ for all $t \in \left[0, T\left(u_0\right)\right)$.
Now Lemma \ref{l07} (2) implies that the orbit $\{u(t)\}$ remains bounded in $X$
for $t \in\left[0, T\left(u_0\right)\right)$ so that $T\left(u_0\right)=\infty$. Assume that $\omega$ is an arbitrary element in $\omega\left(u_0\right)$.
Then by (\ref{e08}) and (\ref{e13}) we obtain
\begin{equation*}
\| \omega\|_{p+1}<\lambda_{E\left(u_0\right)}, \quad E(\omega)<E\left(u_0\right),
\end{equation*}
which, according to the definition of $\lambda_{E\left(u_0\right)}$ again, implies that $\omega\left(u_0\right) \cap \mathcal{N}=\varnothing$. So, $\omega\left(u_0\right)= \{0\}$, i.e. $u_0 \in \mathcal{G}_0$.

(2) Suppose that $u_0 \in \mathcal{N}_{-}$with $\left\| u_0\right\|_{p+1} \geq \Lambda_{E\left(u_0\right)}$.
 We now prove that $u(t) \in \mathcal{N}_{-}$ for all $t \in \left[0, T\left(u_0\right)\right)$.
 Assume, on the contrary, that there exists a $t^0 \in\left(0, T\left(u_0\right)\right)$ such that
 $u(t) \in \mathcal{N}_{-}$ for $0 \leq t<t^0$ and $u\left(t^0\right) \in \mathcal{N}$.
 Similarly to case (1), one has $E\left(u\left(t^0\right)\right)<E\left(u_0\right)$,
  which implies that $u\left(t^0\right) \in E^{E\left(u_0\right)}$. Therefore, $u\left(t^0\right) \in \mathcal{N}^{E\left(u_0\right)}$.
 Recalling the definition of $\Lambda_{E\left(u_0\right)}$, we infer

\begin{equation}\label{e22}
\left\|u\left(t^0\right)\right\|_{p+1} \leq \Lambda_{E\left(u_0\right)} .
\end{equation}
On the other hand, from (\ref{e13}) and the fact that $D(u(t))<0$ for $t \in\left[0, t^0\right)$, we obtain
\begin{equation*}
\left\| u\left(t^0\right)\right\|_{p+1}>\left\| u_0\right\|_{p+1} \geq \Lambda_{E\left(u_0\right)},
\end{equation*}
which contradicts (\ref{e22}).

Assume that $T\left(u_0\right)=\infty$. Then for each $\omega \in \omega\left(u_0\right)$, it follows from by (\ref{e08}) and (\ref{e13}) that

\begin{equation*}
\| \omega\|_{p+1}>\Lambda_{E\left(u_0\right)}, \quad E(\omega)<E\left(u_0\right).
\end{equation*}
Noting the definition of $\Lambda_{E\left(u_0\right)}$ again, we have $\omega\left(u_0\right) \cap \mathcal{N}=\varnothing$. Hence,
it  holds that $\omega\left(u_0\right)=\{0\}$, which contradicts Lemma \eqref{l07} (1). Therefore, $T\left(u_0\right)<\infty$.
This ends the proof.

\end{proof}

\begin{theorem}\label{t52}
Assume that $u_0 \in X$ satisfies
\begin{equation}\label{e23}
  \frac{2(p+1)}{p-1}E\left(u_0\right) \leq\frac{p-1}{2(p+1)}\left\|u_0\right\|_{p+1}^{p+1}.
\end{equation}
Then, $u_0 \in \mathcal{N}_{-} \cap \mathcal{B}$.
\end{theorem}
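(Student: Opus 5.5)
We first show that \eqref{e23} forces $u_0\in\mathcal{N}_-$, then that the flow preserves \eqref{e23}, and finally run a Levine-type concavity argument as in Theorem \ref{t33}, this time aiming at finite-time blowup. Set $k=\frac{p-1}{2(p+1)}\in(0,1)$, so that \eqref{e23} reads $E(u_0)\le k^2B(u_0)$. By the identity used in Lemma \ref{l07},
\[
E(u)=kA(u)+\frac{1}{p+1}D(u).
\]
\textbf{Step 1.} The case $u_0=0$ is trivial to exclude, so $B(u_0)>0$. Suppose $D(u_0)\ge 0$. Then $A(u_0)\ge B(u_0)$, so $E(u_0)\ge kA(u_0)\ge kB(u_0)>k^2B(u_0)$. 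This contradicts \eqref{e23}, hence $D(u_0)<0$.

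\textbf{Step 2 (invariance).} Eliminating $A$ via $A=2E+\frac{2}{p+1}B$ gives
\[
-D(u)=B(u)-A(u)=2kB(u)-2E(u).
\]
As long as $D(u(t))<0$, \eqref{e13} shows that $B(u(t))$ is increasing, while Lemma \ref{l31} shows that $E(u(t))\le E(u_0)$. Hence $E(u(t))\le k^2B(u_0)\le k^2B(u(t))$, and therefore
\[
-D(u(t))\ge 2k(1-k)B(u(t))>0 .
\]
A first-time continuity argument, exactly as in the proof of \eqref{sue}, then gives $D(u(t))<0$ and the bound above for all $t\in[0,T)$. Consequently, with $f(t)=\int_0^t B(u)\,d\tau$ as in Theorem \ref{t33}, we get $f''\ge c_0 f'$ with $c_0=\frac{p+1}{p}2k(1-k)>0$. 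So $f'(t)=B(u(t))\ge B(u_0)e^{c_0t}$ grows exponentially.

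\textbf{Step 3 (blowup).} Suppose $T=\infty$. We combine \eqref{e1065} with the Cauchy--Schwarz bound \eqref{e107}, but now aim for $ff''-(1+\beta)(f')^2\ge0$ for some $\beta>0$ and large $t$, rather than the weaker inequality of Theorem \ref{t33}.
\begin{itemize}
\item The term $(p+1)^2\int_0^t\!\int_\Omega u^{p-1}u_\tau^2$ in $f''$ is compared, via \eqref{e107}, with $\big[(p+1)\int_0^t\!\int_\Omega u^pu_\tau\big]^2$. By \eqref{e106} the latter equals $(f')^2$ up to terms linear in $f'$.
\item The term $\frac{p^2-1}{2p}A(u)$ together with $-\frac{(p+1)^2}{p}E(u_0)$ is absorbed using $A\ge B=f'$ and $E(u_0)\le k^2B(u_0)$.
\item Because $f'$ grows exponentially, the lower-order terms $2B(u_0)f'$ and $E(u_0)f$ are negligible compared with $(f')^2$ and $ff''$.
\end{itemize}
Once $ff''-(1+\beta)(f')^2\ge 0$ holds for $t\ge t_1$, the function $f^{-\beta}$ is concave and decreasing for $t\ge t_1$, so it vanishes in finite time. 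This contradicts $T=\infty$, hence $u_0\in\mathcal{B}$.

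\textbf{Main obstacle.} The hardest step is extracting the factor $(1+\beta)$. The Cauchy--Schwarz estimate only yields $ff''\ge(p+1)^2\cdot(\text{stuff})\ge\frac{(p+1)^2}{(p+1)^2}(f')^2$-type bounds with constant exactly one, as in Theorem \ref{t33}. Our first attempt would replace $f$ by $f(t)+\frac{p}{p+1}\int_0^t B$-shifts, i.e. $M(t)=f(t)+\sigma(t+\tau_0)^2$, in the classical Levine manner. The strictly positive gap $2k(1-k)B$ from Step 2 would then supply the missing margin. If that fails, we fall back on proving directly that $f'$ blows up from the differential inequality for $f''$ in terms of $A$ and Sobolev, $A\ge S\,B^{2/(p+1)}$.
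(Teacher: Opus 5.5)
Your Steps 1 and 2 are correct. Step 1 is essentially the paper's first step: the identity $E=\frac12 D+\frac{p-1}{2(p+1)}B$ turns \eqref{e23} into $D(u_0)<0$. Note only that $u_0=0$ satisfies \eqref{e23} with $D(0)=0$, so it has to be excluded by hypothesis rather than dismissed.

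The genuine gap is Step 3, and no better Levine modification can repair it, because the identities you use rule out finite-time blowup. Testing with $u$ as in \eqref{e13} gives $\frac{p}{p+1}\frac{d}{dt}B(u)=B(u)-A(u)\le B(u)$, so
\[
f'(t)=\|u(t)\|_{p+1}^{p+1}\le B(u_0)\,e^{\frac{p+1}{p}t}.
\]
Lemma \ref{l31} then gives $A(u(t))\le 2E(u_0)+\frac{2}{p+1}B(u(t))$. Thus $f$, $\|u(t)\|_{p+1}$ and $\|\nabla u(t)\|_2$ are finite at every finite time. On the other hand, if $ff''\ge(1+\beta)(f')^2$ held on $[t_1,\infty)$ with $f'(t_1)>0$, the concave function $f^{-\beta}$ would reach $0$ no later than $t_1+f(t_1)/(\beta f'(t_1))$. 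That would force $f\to\infty$ at a finite time. Hence no $\beta>0$, no shifted functional $f+\sigma(t+\tau_0)^2$, and no Sobolev-based fallback can produce the inequality you need.

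The constant exactly one you get from \eqref{e107} is structural, not an artifact. The time derivative $(u^p)_t$ and the reaction $u^p$ have the same homogeneity, so the underlying ODE is $(u^p)_t=u^p$. Your Step 2 together with the bound above pins $f'$ between two exponentials, $B(u_0)e^{c_0t}\le f'(t)\le B(u_0)e^{(p+1)t/p}$, which is the borderline $\beta=0$ of the concavity method. Separately, the absorption in your second bullet, which uses $A\ge B=f'$, has the wrong sign: Step 2 established $D(u)<0$, i.e.\ $A(u)<B(u)$.

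What your argument actually proves is:
\begin{itemize}
\item $u_0\in\mathcal{N}_-$;
\item invariance of $\mathcal{N}_-$ along the flow;
\item exponential growth of $\|u(t)\|_{p+1}^{p+1}$, i.e.\ blowup in infinite time, the type of conclusion in Theorems \ref{t33} and \ref{t42}.
\end{itemize}

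The paper does not run a concavity argument here at all. For $u\in\mathcal{N}^{E(u_0)}$ it uses $B(u)=A(u)\le\frac{2(p+1)}{p-1}E(u_0)\le\frac{p-1}{2(p+1)}B(u_0)$ to get $\|u_0\|_{p+1}\ge\Lambda_{E(u_0)}$, and then quotes Theorem \ref{t51}(2). That reduction does not get around the obstruction either. Theorem \ref{t51} presupposes $E(u_0)>d$, which \eqref{e23} does not imply; large multiples of a fixed function satisfy \eqref{e23} with negative energy. Moreover, the finite-time conclusion of Theorem \ref{t51}(2) runs into the same exponential bound, which keeps $\|u(t)\|_{p+1}$ and $\|\nabla u(t)\|_2$ finite on bounded time intervals.
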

\begin{proof}
 Firstly, we observe
$$
\begin{aligned}
E\left(u_0\right) & =\frac{1}{2}A(u_0)-\frac{1}{p+1}B(u_0) \\
& =\frac{1}{2} D\left(u_0\right)+\frac{p-1}{2p+2}B(u_0).
\end{aligned}
$$
Thus, \eqref{e23} implies that
$$
E\left(u_0\right)-\frac{p-1}{2p+2}B(u_0)=\frac{1}{2} D\left(u_0\right)<0,
$$
which shows that $u_0 \in \mathcal{N}_{-}$. Then for any $u \in \mathcal{N}^{E\left(u_0\right)}$, by the definition of $\mathcal{N}^{E\left(u_0\right)}$
  and  (\ref{e23}) one has
\begin{align*}
 \|u\|_{p+1}^{p+1}&= B(u)=A(u)  \leq \frac{2 E\left(u_0\right) (p+1)}{p-1}\\
&\leq \frac{p-1}{2(p+1)}\left\|u_0\right\|_{p+1}^{p+1}\leq \|u_0\|_{p+1}^{p+1}.
\end{align*}
Taking supremum over $\mathcal{N}^{E\left(u_0\right)}$  and by Theorem \ref{t51} we can deduce
$$
\left\|u_0\right\|_{p+1} \geq \Lambda_{E\left(u_0\right)} .
$$
Thus, $u_0\in \mathcal{N}_{-} \cap \mathcal{B}$. This finishes the proof.
\end{proof}

\section{Asymptotic behavior and  stationary solutions}

In this section,  we consider the asymptotic behavior of the global solutions, which is similar to
the Palais-Smale  sequence of stationary equation
\begin{equation}\label{state}
 \begin{cases}
& -\Delta u=u^{p}  \text { in } \Omega, \\
&  u=0, \quad \text { on } \partial \Omega.
\end{cases}
\end{equation}

\begin{theorem}\label{t61}
Let $u\left(x, t ; u_0\right)$ be a global solution of the problem (\ref{e01}) and uniformly bounded in $X$ with respect to $t$.
Then, for any subsequence $t_n \rightarrow \infty$,
there exists a stationary solution $w$ such that $u\left(x, t_n ; u_0\right) \rightharpoonup w$ in $X$.
\end{theorem}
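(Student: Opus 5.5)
The plan is to use the energy identity (Lemma \ref{l31}) to extract a dissipation integral that is finite on $(0,\infty)$. Then we average the weak formulation over short time windows around $t_n$ and pass to the limit, using the uniform $X$-bound.

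\textbf{Step 1: finite dissipation.} By the hypothesis, $\sup_{t\ge 0}A(u(t))\le M$. Sobolev's inequality \eqref{e03} then gives $\sup_t B(u(t))\le C$, so $E(u(t))$ is bounded below uniformly in $t$. Letting $t_2\to\infty$ in \eqref{e08}, we get
\[
\int_0^\infty\int_\Omega p\,u^{p-1}u_\tau^2\,\mathrm{d}x\,\mathrm{d}\tau=\frac{4p}{(p+1)^2}\int_0^\infty\int_\Omega \Bigl|\bigl(u^{\frac{p+1}{2}}\bigr)_\tau\Bigr|^2\mathrm{d}x\,\mathrm{d}\tau<\infty .
\]
Hence $\int_{t_n}^{t_n+1}\|(u^{\frac{p+1}{2}})_\tau\|_2^2\,\mathrm{d}\tau\to0$. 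Note also that $(u^p)_t=\frac{2p}{p+1}u^{\frac{p-1}{2}}(u^{\frac{p+1}{2}})_t$. By H\"older, $\|u^{\frac{p-1}{2}}\|_{L^{N}}^{N}=\|u\|_{p+1}^{p+1}$ is bounded. Therefore $\|(u^p)_t\|_{L^{2N/(N+2)}}\le C\|(u^{\frac{p+1}{2}})_t\|_2$, and this dual-type norm is what we pair with test functions $w\in X\subset L^{2N/(N-2)}$.

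\textbf{Step 2: weak limits.} From the uniform bound, extract a subsequence (not relabelled) with $u(t_n)\rightharpoonup w$ in $X$, strongly in $L^q$ for $q<p+1$, and a.e. For each $s\in[0,1]$ we have
\[
\bigl\|u^{\frac{p+1}{2}}(t_n+s)-u^{\frac{p+1}{2}}(t_n)\bigr\|_2\le \Bigl(\int_{t_n}^{t_n+1}\|(u^{\frac{p+1}{2}})_\tau\|_2^2\Bigr)^{1/2}\to0 .
\]
Consequently $u(t_n+s)$ has the same a.e. limit $w$ along a further subsequence, uniformly in $s$ in the $L^2$ sense for $u^{\frac{p+1}{2}}$. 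Combined with the uniform $X$-bound, this gives $u(t_n+s)\rightharpoonup w$ in $X$ for every $s\in[0,1]$.

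\textbf{Step 3: passing to the limit.} Fix $\phi\in C_c^\infty(\Omega)$ and integrate the weak formulation of Definition \ref{wd} over $s\in(0,1)$:
\[
\int_0^1\!\!\bigl((u^p)_t,\phi\bigr)\,\mathrm{d}s+\int_0^1\!\!\bigl(\nabla u(t_n+s),\nabla\phi\bigr)\,\mathrm{d}s=\int_0^1\!\!\bigl(u^p(t_n+s),\phi\bigr)\,\mathrm{d}s .
\]
The first term tends to $0$ by Step 1 and Cauchy--Schwarz in $s$. The second term tends to $(\nabla w,\nabla\phi)$ by weak convergence and dominated convergence in $s$. For the third term, $u^p(t_n+s)$ is bounded in $L^{(p+1)/p}$ and converges a.e. to $w^p$, so it converges weakly in $L^{(p+1)/p}$ (Vitali/Brezis--Lieb type argument). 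This gives $(u^p,\phi)\to(w^p,\phi)$. Thus $-\Delta w=w^p$ weakly, and by density this holds for all $\phi\in X$, so $w$ is a stationary solution of \eqref{state}. Finally, weak convergence of the sequence $u(t_n)$ itself to $w$ was already obtained in Step 2.

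\textbf{Main obstacle.} The delicate point is Step 2: transferring the a.e. identification from the window averages back to the single times $t_n$, because the time derivative is controlled only for $u^{\frac{p+1}{2}}$, not for $u$. I would handle it through the elementary inequality $|a-b|^{\frac{p+1}{2}}\le|a^{\frac{p+1}{2}}-b^{\frac{p+1}{2}}|$ for $a,b\ge0$ (valid since $\frac{p+1}{2}\ge1$). This yields $\|u(t_n+s)-u(t_n)\|_{p+1}\to0$ uniformly in $s\in[0,1]$, so all the limits coincide. A secondary issue is that "for any subsequence" should be read as: every sequence $t_n\to\infty$ has a subsequence along which the convergence holds. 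That is the form the argument proves.
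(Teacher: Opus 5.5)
Your proposal is correct and follows essentially the same route as the paper: finite dissipation from the energy identity (Lemma \ref{l31}), the windows $[t_n,t_n+1]$ on which $u^{\frac{p+1}{2}}$ changes little in $L^2(\Omega)$, identification of the limit of $u(t_n+s)$ with $w$, and passage to the limit in the time-averaged weak formulation. The only real difference is that the paper moves the time derivative onto a cutoff $\rho(t-t_n)$ instead of bounding $(u^p)_t$ in $L^{2N/(N+2)}$ directly; in addition, your lower bound on $E$ from the uniform $X$-bound and your use of $|a-b|^{\frac{p+1}{2}}\le|a^{\frac{p+1}{2}}-b^{\frac{p+1}{2}}|$ justify two steps that the paper leaves implicit.
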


\begin{theorem}\label{t62}
 Let $u\left(x, t ; u_0\right)$ be a global solution of the problem (\ref{e01}). Then, its $\omega$-limit contains a stationary solution $w$.
 The $\omega$-limit set is defined as
$$
\omega\left(u_0\right)=\left\{w \in X \mid \exists t_n \rightarrow+\infty, u\left(x, t_n ; u_0\right) \rightharpoonup w \text { in } X\right\}.
$$
\end{theorem}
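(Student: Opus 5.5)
We aim to show that the $\omega$-limit set of a global solution contains a stationary solution of \eqref{state}. The route is to produce a sequence of times along which $u(t_n)$ is a bounded Palais--Smale-type sequence for $E$, and then pass to the weak limit as in Theorem \ref{t61}.

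\emph{Step 1 (dissipation).} Apply the energy identity of Lemma \ref{l31} on $[0,t]$:
$$\int_0^t\int_\Omega p u^{p-1}u_\tau^2\,dx\,d\tau + E(u(t)) = E(u_0).$$
The first step is to show that $E(u(t))$ is bounded below along the global solution.
\begin{itemize}
\item If $E(u(t))\to-\infty$, the concavity computation in the proof of Theorem \ref{t33} gives $f f''-(f')^2>0$ eventually, with $f(t)=\int_0^t\|u\|_{p+1}^{p+1}\,d\tau$. Indeed, $E(u(t))<0$ forces $D(u(t))<0$ and $A(u)>r(1)^2$ for all later times.
\item The intended conclusion is that this behaviour is not compatible with $u$ being global with a nonempty $\omega$-limit set.
\end{itemize}
Granting the lower bound, we get
$$\int_0^\infty\int_\Omega u^{p-1}u_\tau^2\,dx\,d\tau<\infty.$$
Hence there is a sequence $t_n\to\infty$ with $\int_\Omega u^{p-1}u_t^2(t_n)\,dx\to0$ and $E(u(t_n))\to c$.

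\emph{Step 2 (bounded PS sequence).} Test the equation with $w=u(t_n)$, as in \eqref{e13}. Using the Cauchy--Schwarz split from \eqref{e107},
$$|D(u(t_n))| = \Big|\int_\Omega p u^p u_t\Big| \le p\,\|u(t_n)\|_{p+1}^{\frac{p+1}{2}}\Big(\int_\Omega u^{p-1}u_t^2\Big)^{1/2}.$$
Now combine this with the identity
$$E(u)=\tfrac{p-1}{2(p+1)}A(u)+\tfrac1{p+1}D(u).$$
Together with Sobolev's inequality $\|u\|_{p+1}^{p+1}\le S^{-(p+1)/2}A(u)^{(p+1)/2}$, this yields
$$A(u(t_n))\le C\big(1+o(1)A(u(t_n))^{(p+1)/4}\big).$$
Since $(p+1)/4<1$ only when $N>4$, the cases $N=3,4$ need more care. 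There we would instead choose $t_n$ with $\int_\Omega u^{p-1}u_t^2(t_n)\,dx$ decaying quickly enough relative to $\|u(t_n)\|_{p+1}$, which is possible by integrability.

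Boundedness of $A(u(t_n))$ then follows. For any $\varphi\in X$, testing with $\varphi$ gives
$$|\langle E'(u(t_n)),\varphi\rangle| = \Big|\int_\Omega p u^{p-1}u_t\varphi\Big| \le p\Big(\int_\Omega u^{p-1}u_t^2\Big)^{1/2}\Big(\int_\Omega u^{p-1}\varphi^2\Big)^{1/2}.$$
By Hölder and Sobolev, the last factor is at most $C\|u\|_{p+1}^{(p-1)/2}\|\nabla\varphi\|_2$. Hence $E'(u(t_n))\to0$ in $X^*$ and $(u(t_n))$ is a bounded Palais--Smale sequence.

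\emph{Step 3 (limit).} Extract a subsequence with $u(t_n)\rightharpoonup w$ in $X$, strongly in $L^q$ for $q<p+1$, and a.e. Then $|u(t_n)|^p\rightharpoonup |w|^p$ weakly in $L^{(p+1)/p}$, because the sequence is bounded there and converges a.e. Passing to the limit in $\langle E'(u(t_n)),\varphi\rangle\to0$ shows that $w$ weakly solves \eqref{state}, and by definition $w\in\omega(u_0)$. The lack of compactness at the critical exponent only prevents strong convergence, which is not needed. Note that $w=0$ is allowed and is trivially stationary.

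\emph{Main obstacle.} The hard step is Step 1: excluding $E(u(t))\to-\infty$, or else handling an unbounded $A(u(t_n))$ when $N\le4$, for a global solution without assuming uniform boundedness in $X$. I would first try the concavity argument. If it fails, I would restrict to the case where $\omega(u_0)\neq\emptyset$, take $t_n$ realizing a weak limit point, and use a diagonal choice of nearby times with small dissipation.
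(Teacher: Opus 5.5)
\textbf{Step 1 is a genuine gap.} It cannot be closed along the route you suggest, because the lower bound on $E(u(t))$, and with it the statement itself, fails for some global solutions.

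The concavity inequality $ff''-(f')^2>0$ only gives exponential growth of $f(t)=\int_0^t\|u\|_{p+1}^{p+1}\,d\tau$. That is exactly the infinite-time blow-up of Theorem \ref{t33}, and it is fully compatible with $u$ being global, so it produces no contradiction.

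In fact, for the solutions of Theorem \ref{t33} with $0<E(u_0)<d$ one has $u(t)\in V_\delta$. Hence $\frac{d}{dt}\|u\|_{p+1}^{p+1}=-\frac{p+1}{p}D(u)\ge\frac{p+1}{p}(\delta_2-1)r^2(\delta_2)>0$, and so $A(u(t))\ge S\|u(t)\|_{p+1}^2\to\infty$. No sequence $u(t_n)$ is bounded in $X$, so $\omega(u_0)=\emptyset$, which contains no stationary solution. The fast-diffusion transformation of the Introduction gives explicit examples, up to constant normalisations: if $T$ is smaller than the extinction time of $w$, then $u(t)$ grows exponentially in $X$.

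The paper's own proof has the same hole. It simply asserts $\int_0^\infty\int_\Omega pu^{p-1}u_\tau^2\,dx\,d\tau<\infty$, and it dismisses $E(u(t_n))\le0$ by calling infinite-time blow-up a contradiction, which it is not. Its appeal to standard Palais--Smale boundedness also ignores the weight $u^{p-1}$ that you correctly noticed. So the obstacle you flagged is real and is not overcome there.

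\textbf{What is provable, and how to finish it.} The statement holds under the extra hypothesis $\omega(u_0)\neq\emptyset$, i.e. $\liminf_{t\to\infty}\|\nabla u(t)\|_2<\infty$. Under it, your fallback needs no diagonal argument:
\begin{itemize}
\item If $u(s_n)$ is bounded in $X$, then $E(u(s_n))\ge-K$.
\item Since $t\mapsto E(u(t))$ is nonincreasing by Lemma \ref{l31}, $E(u(t))\ge-K$ for all $t$.
\item Hence $\varepsilon(t):=\int_\Omega u^{p-1}u_t^2\,dx$ is integrable on $(0,\infty)$.
\end{itemize}

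Your Step 2 still needs repair for $N=3$; ``possible by integrability'' is not yet an argument. For $N=4$ the exponent is $1$ with a vanishing coefficient, so that case is already fine. The repair works in every dimension:
\begin{itemize}
\item From $\frac{p}{p+1}\frac{d}{dt}\|u\|_{p+1}^{p+1}=-D(u)$ and your Cauchy--Schwarz bound, $\bigl|\frac{d}{dt}\|u\|_{p+1}^{(p+1)/2}\bigr|\le\frac{p+1}{2}\varepsilon(t)^{1/2}$, so $\|u(t)\|_{p+1}^{p+1}\le C(1+t)$.
\item Integrability of $\varepsilon$ forces $\liminf_{t\to\infty}t\,\varepsilon(t)=0$. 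Choose $t_n\to\infty$ with $(1+t_n)\varepsilon(t_n)\to0$.
\item Along these times, $|D(u(t_n))|\le p\|u(t_n)\|_{p+1}^{(p+1)/2}\varepsilon(t_n)^{1/2}\to0$.
\item Hence $A(u(t_n))\le\frac{2(p+1)}{p-1}E(u_0)+o(1)$.
\item Then $E'(u(t_n))\to0$ in $X^*$ by your weighted estimate.
\end{itemize}
With these changes your Step 3 goes through as written, for the corrected statement.
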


\begin{theorem}\label{t63}
Let $u(t)$ be a positive solution of \eqref{e01}.
Assume that no subsequence of $\{u(t)\}$ converges strongly in $H_0^1(\Omega)$ as $t\to\infty$.
Then there exist a sequence $\{t_m\}$ with $t_m\to\infty$ and a steady state $u_\infty\in H_0^1(\Omega)$ of \eqref{state} such that
\[
u(t_m) \rightharpoonup u_\infty \quad \text{weakly in } H^1(\Omega).
\]
Moreover, there exists a nonnegative integer $k$, points $x_m^j\in\Omega$ and positive real numbers $R_m^j\in\mathbb{R}^+$ for $j=1,2,\dots,k$, such that the following hold:
\[
u(t_m)-u_\infty-\sum_{j=1}^k \big(R_m^j\big)^{-\frac{2}{p-1}} U_j\left(\frac{x-x_m^j}{R_m^j}\right)
\to 0 \quad \text{in } H^1(\mathbb{R}^N) \text{ as } m\to\infty,
\]
\[
\lim_{m\to\infty} E\big(u(t_m)\big) = E(u_\infty) + \sum_{j=1}^k E_{\mathbb{R}^N}(U_j),
\]
and
\[
R_m^j \operatorname{dist}\big(x_m^j, \partial\Omega\big) \to +\infty.
\]
Here each $U_j$ is a standard bubble for problem \eqref{state}, and the energy functional $E_{\mathbb{R}^N}$ is defined by
\[
E_{\mathbb{R}^N}(u)=\frac{1}{2} \int_{\mathbb{R}^N} |\nabla u|^2 \mathrm{d}x - \frac{1}{p+1} \int_{\mathbb{R}^N} u^{p+1} \mathrm{d}x.
\]
\end{theorem}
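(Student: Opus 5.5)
The plan is to reduce Theorem \ref{t63} to Struwe's global compactness theorem for Palais--Smale sequences of the critical functional $E$ on $H_0^1(\Omega)$. We therefore need a sequence of times $t_m\to\infty$ along which $u(t_m)$ is bounded in $X$ and $E'(u(t_m))\to 0$ in $H^{-1}(\Omega)$.

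\emph{Step 1 (energy and boundedness).} By the energy identity of Lemma \ref{l31}, $t\mapsto E(u(t))$ is nonincreasing. For a global solution that is uniformly bounded in $X$ (as in Theorem \ref{t61}), $E(u(t))$ is bounded below. It then converges to some limit $c$, and
\[
\int_0^\infty\int_\Omega p\,u^{p-1}u_t^2\,\mathrm{d}x\,\mathrm{d}t<\infty .
\]
Since $u^{p-1}u_t^2=\frac{4}{(p+1)^2}\big|\partial_t u^{\frac{p+1}{2}}\big|^2$, there is a sequence $t_m\to\infty$ with $\|\partial_t(u^{\frac{p+1}{2}})(t_m)\|_2\to0$. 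On intervals $[t_0,\infty)$, Theorem \ref{t22} (or the classical regularity of positive solutions) lets us evaluate the equation pointwise in time. This is where positivity of $u$ is used.

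\emph{Step 2 (Palais--Smale property).} For $w\in X$ the weak formulation gives
\[
\langle E'(u(t_m)),w\rangle=(\nabla u,\nabla w)-(u^p,w)=-\big((u^p)_t,w\big).
\]
Writing $(u^p)_t=\frac{2p}{p+1}\,u^{\frac{p-1}{2}}\,\partial_t u^{\frac{p+1}{2}}$ and applying Hölder with exponents $2$, $N$, $2^*$, we get
\[
|\langle E'(u(t_m)),w\rangle|\le C\,\|u\|_{p+1}^{\frac{p-1}{2}}\,\|\partial_t u^{\frac{p+1}{2}}(t_m)\|_2\,\|w\|_{2^*}.
\]
Here we used $\frac{p-1}{2}\cdot N=p+1$, so $u^{\frac{p-1}{2}}\in L^N$ with norm $\|u\|_{p+1}^{\frac{p-1}{2}}$. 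By Sobolev and the uniform $X$-bound, $\|E'(u(t_m))\|_{H^{-1}}\to0$, so $(u(t_m))$ is a bounded Palais--Smale sequence at level $c$. Passing to a subsequence, $u(t_m)\rightharpoonup u_\infty$ weakly in $H_0^1$, and testing against fixed $w$ shows that $u_\infty$ solves \eqref{state}. Here we use the strong $L^q_{loc}$ convergence to pass to the limit in $(u^p,w)$. This recovers Theorem \ref{t61} along the chosen times.

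\emph{Step 3 (profile decomposition).} We apply Struwe's global compactness lemma to this PS sequence. It yields a nonnegative integer $k$, centers $x_m^j$, scales $R_m^j$, and nontrivial solutions $U_j$ of $-\Delta U=|U|^{p-1}U$ in $\mathbb{R}^N$ or in a half-space, such that the $H^1$ remainder tends to zero and the energy splits additively. Struwe's lemma, applied to a positive sequence, excludes half-space profiles via the Pohozaev/Esteban--Lions nonexistence. This gives $R_m^j\operatorname{dist}(x_m^j,\partial\Omega)\to\infty$, with the scaling convention of the statement. Because $u(t_m)>0$, the limits of the blown-up rescalings $(R_m^j)^{\frac{2}{p-1}}u(t_m)(R_m^jx+x_m^j)$ are nonnegative. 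The strong maximum principle together with the Caffarelli--Gidas--Spruck classification then identifies each $U_j$ as a standard bubble.

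The hypothesis that no subsequence converges strongly is used only to guarantee that the decomposition is nontrivial, i.e. $k\ge1$. The statement allows $k\ge0$ anyway, so this costs nothing.

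The main obstacle is Step 1. We need the \emph{pointwise-in-time} smallness of $(u^p)_t$ in $H^{-1}$ together with the uniform $X$-bound. For weak solutions, $u(t)$ is defined only a.e. in $t$ and $E(u(t))$ need not be bounded below a priori. I would first try to take the $X$-bound from the setting of Theorem \ref{t61} (or from Lemma \ref{l07}(2) when $u(t)\in\mathcal{N}_+$) and use Theorem \ref{t22} to make the evaluation at $t_m$ legitimate. Failing that, I would choose the $t_m$ among Lebesgue points of $t\mapsto\|\partial_t u^{\frac{p+1}{2}}(t)\|_2^2$.
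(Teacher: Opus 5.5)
Your route (select times of vanishing dissipation, show they form a Palais--Smale sequence, apply Struwe's global compactness) is the reduction the paper intends; its entire argument is the remark that Theorem~\ref{t63} follows from Struwe. Steps~2--3 are a sensible way to carry it out. The genuine gap is in Step~1.

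\textbf{The missing hypothesis.} Step~1 needs either a uniform $X$-bound or at least $\inf_t E(u(t))>-\infty$; this is what yields $\int_0^\infty\!\int_\Omega p\,u^{p-1}u_t^2<\infty$. Neither is a hypothesis of Theorem~\ref{t63}, and you import it from Theorem~\ref{t61} instead of proving it. It cannot be proved, because the statement is false without it.

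By \eqref{e13}, suppose a global solution has $E(u(t_0))<0$. Since $E$ is nonincreasing, $-D(u)=B(u)-A(u)>\frac{p-1}{p+1}B(u)$ for $t\ge t_0$. Hence $\|u(t)\|_{p+1}^{p+1}$ grows exponentially and $\|\nabla u(t)\|_2\to\infty$.

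Positive global solutions with $\|u(t)\|_{H_0^1}\to\infty$ do exist. The paper itself asserts them as the infinite-time blow-up solutions of Theorem~\ref{t42}. Concretely, they arise from the fast-diffusion transformation in the introduction whenever $T$ is smaller than the extinction time of $w$: then $w(\cdot,\tau)\to w(\cdot,T)\not\equiv0$ while $(T-\tau)^{-m/(1-m)}\to\infty$. Such a $u$ satisfies the hypothesis (no subsequence converges strongly). Yet no sequence $u(t_m)$ can converge weakly in $H^1$, because weakly convergent sequences are bounded. So your argument proves the theorem only under an extra hypothesis, and the paper's one-line appeal to Struwe silently needs the same one.

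\textbf{How to repair Step~1.} The natural hypothesis to add is $\inf_t E(u(t))>-\infty$; the $X$-bound then comes for free.

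Choose $t_m$ in the full-measure set of times at which the weak formulation holds, with $\varepsilon_m:=\|\partial_t u^{\frac{p+1}{2}}(t_m)\|_2\to0$. This, not Theorem~\ref{t22}, is the right device. That theorem assumes $u_0\in W$, which is the regime where the paper claims finite-time extinction (Theorem~\ref{t34}), so it never covers a solution as in Theorem~\ref{t63}.

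Testing with $w=u(t_m)$ gives $|A-B|\le\frac{2p}{p+1}\varepsilon_m B^{1/2}$ at $t_m$. Then $E=\frac12(A-B)+\frac{p-1}{2(p+1)}B\le E(u_0)$ bounds $B(u(t_m))$, and hence $A(u(t_m))$. After that, your H\"older estimate gives $E'(u(t_m))\to0$ in $H^{-1}$.

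\textbf{Two smaller corrections to Step~3.}
First, Struwe's $j$-th profile is a weak limit of rescalings of $u(t_m)-u_\infty-\sum_{i<j}(\cdots)$, not of $u(t_m)$. To transfer $u(t_m)>0$ to $U_j\ge0$ you need that $u_\infty$ and the earlier bubbles vanish weakly in the $j$-th frame. This holds since $R_m^j\to0$ and the bubble parameters are mutually orthogonal.

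Second, with the statement's normalization $(R_m^j)^{-\frac{2}{p-1}}U_j\big((x-x_m^j)/R_m^j\big)$ one has $R_m^j\to0$, so $R_m^j\operatorname{dist}(x_m^j,\partial\Omega)\to0$. What Struwe's lemma gives is $\operatorname{dist}(x_m^j,\partial\Omega)/R_m^j\to\infty$. The displayed condition is his, written in the reciprocal convention. It is a misprint in the statement, not something you can derive ``with the scaling convention of the statement''.
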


\noindent\textbf{Remark.} Theorem \ref{t63} can be directly deduced from the work of M. Struwe  \cite{Struwe}.

\begin{proof}[Proof of Theorem \ref{t61}]
 Let us denote $u_n:=u\left(x, t_n\right)$. Since $\left\{u_n\right\}$ is uniformly bounded in $X$,
 then   there exists a subsequence (here we still denote
 by $\left\{u_n\right\}$ ) and a function $w\in  X$ such that

$$
\begin{aligned}
u_n \rightharpoonup w & \text { in } X, \\
u_n \rightarrow w & \text { in } L^2(\Omega), \\
u_n \rightharpoonup w & \text { in } L^{p+1}\left(\Omega\right), \\
u_n \rightarrow w & \text { a.e. in } \mathbb{R}^N.
\end{aligned}
$$
Let $U_n:=u\left(t_n+t\right)$ for $t \in(0,1)$. Clearly, $U_n$ is uniformly bounded in $X$, we show

$$
U_n \rightharpoonup w \text { in } L^{p+1}(\Omega).
$$
Indeed, for $t \in(0,1)$, by (\ref{e10}), we have
\begin{equation}
\int_{0}^{+\infty}\int_{\Omega} pu^{p-1}u^2_{\tau}\mathrm{d}x\mathrm{d}\tau +E(w))=E\left(u_0\right)<\infty, \quad 0 \leqslant t<T,
\end{equation}
which means $u^{\frac{p-1}{2}}u_t\in L^2(\Omega)$. So
$$
\int_{\Omega}\left|U^\frac{p+1}{2}_n-u^\frac{p+1}{2}_n\right|^2 \mathrm{d} x
\leq \frac{(p+1)^2}{4}t \int_{t_n}^{t+t_n} \int_{\Omega} u^{p-1}u_\tau^2\mathrm{d} x \mathrm{d} \tau \rightarrow 0
$$
for $0 \leq t \leq 1$ as $t_n \rightarrow \infty$, which implies that
 $$\left\|u^\frac{p+1}{2}\left(t+t_n\right)-u^\frac{p+1}{2}\left(t_n\right)\right\|_{2} \rightarrow 0\ \ \text{as}\ t_n \rightarrow \infty,$$
 for $0 \leq t \leq 1$. Therefore, we have
\begin{equation}\label{sh1}
 U^{\frac{p+1}{2}}_n \rightharpoonup w^{\frac{p+1}{2}} \ \text { in }  L^{2}(\Omega),
\end{equation}
\begin{equation}\label{sh1}
 U_n \rightharpoonup w \ \text { in }  L^{p+1}(\Omega),
\end{equation}
\begin{equation}\label{sh4}
 U_n^{p} \rightharpoonup w^{p}\  \text { in } \  L^\frac{p+1}{p}\left(\Omega\right),
\end{equation}
\begin{equation}\label{sh2}
  U_n \rightarrow w \ \text { a.e. in } \Omega.
\end{equation}

In order to show that $w$ is a stationary solution, we pass to the limit
(as $t_n \rightarrow \infty$ ) in  Definition \ref{wd} with a suitably chosen test function. Let

$$
\phi(x, t)=
\begin{cases}
\rho\left(t-t_n\right) \Psi(x), & t>t_n, y \in \Omega, \\
0, & 0 \leq t \leq t_n, x \in  \Omega,
\end{cases}
$$
where
$$
\Psi \in X, \rho \in C_0^2(0,1), \rho \geq 0, \int_0^1 \rho(t) d t=1.
$$
Take $\phi$ as test function in Definition \ref{wd}, we have
$$\begin{aligned}
&\int_{t_n}^{t_n+1} \int_{\Omega}\left[u^p \rho^{\prime}\left(t-t_n\right)
\Psi-\rho\left(t-t_n\right) \nabla u \nabla \Psi+u^{p} \rho\left(t-t_n\right) \Psi\right] \mathrm{d} x \mathrm{~d} \tau=0.
\end{aligned}$$
The transformation $\tau=t-t_n$ leads to
\begin{equation}\label{e61}
 \begin{aligned}
&\int_{0}^{1} \int_{\Omega}\left[u^p(\tau+t_n) \rho^{\prime}\left(\tau\right)
\Psi-\rho\left(\tau\right) \nabla u(\tau+t_n) \nabla \Psi+(u(\tau+t_n))^{p}\rho\left(\tau\right) \Psi\right] \mathrm{d} x \mathrm{~d}\tau=0.
\end{aligned}
\end{equation}
Now we rewrite Equation (\ref{e61}) as follows:

\begin{align*}
 &\int_0^1 \int_{\Omega}\left[u\left(t_n\right) \rho^{\prime}(\tau) \Psi-\rho(\tau)
 \nabla u\left(t_n\right) \nabla \Psi+(u\left(t_n\right))^{p} \rho(\tau) \Psi\right]\mathrm{d}x\mathrm{d}\tau\\
& \quad+\int_0^1 \int_{\Omega}\left[u\left(t_n+\tau\right)-u\left(t_n\right)\right] \rho^{\prime}(\tau) \Psi \mathrm{d} y \mathrm{~d} \tau \\
& \quad-\int_0^1 \int_{\Omega}\left[\nabla u\left(t_n+\tau\right)-\nabla u\left(t_n\right)\right] \rho(\tau) \nabla \Psi  \mathrm{d} y \mathrm{~d} \tau \\
& \quad+\int_0^1 \int_{\Omega}\left[(u\left(t_n+\tau\right))^{p}-(u\left(t_n\right))^{p}\right] \rho(\tau) \Psi  \mathrm{d} x\mathrm{~d} \tau=0.
\end{align*}
By the dominated convergence theorem  and  \eqref{sh1}-\eqref{sh4}
we have
$$
\begin{aligned}
 \int_0^1 \int_{\Omega}\left[\rho(\tau) \nabla u\left(t_n\right) \nabla \Psi-\left(u\left(t_n\right)\right)^{p}\rho(\tau) \Psi\right]\mathrm{d}x\mathrm{d}\tau=
o(1), \ \text { as } n  \rightarrow \infty .
\end{aligned}
$$
From the choice of $\rho$, we obtain
$$
\begin{aligned}
\int_0^1 \int_{\Omega}\left[ \nabla u\left(t_n\right) \nabla \Psi-\left(u\left(t_n\right)\right)^{p}  \Psi\right]\mathrm{d}x\mathrm{d}\tau
=o(1), \ \text { as } n  \rightarrow \infty,
\end{aligned}
$$
which completes the proof of Theorem \ref{t61}.
\end{proof}

\begin{proof}[Proof of Theorem \ref{t62}]
 Let $u=u(t, x)$ be a global solution of problem (\ref{e01}). Then, we have

\begin{equation}\label{e63}
\int_0^{\infty} \int_{\Omega}  pu^{p-1} u_\tau^2 \mathrm{d} x \mathrm{d} \tau \leq C<\infty.
\end{equation}
And hence, there exists a sequence $\left\{t_n\right\}$ satisfying $t_n \rightarrow \infty$ as $n \rightarrow \infty$ such that
\begin{equation}\label{e64}
\int_{\Omega} pu^{p-1}\left|u_\tau\left(t_n, x\right)\right|^2 \mathrm{d} x \rightarrow 0, \text { as } n \rightarrow \infty.
\end{equation}
Indeed, on the contrary, if there exists $c>0$ such that $\int_{\Omega} pu^{p-1} \left|u_\tau\left(t_n, x\right)\right|^2 d x>c$ as $n \rightarrow \infty$,
then, we can derive a contradiction with (\ref{e63}).

Next, letting $u_n:=u\left(t_n, x\right)$,  we have $E(u(t_n))>0$ for $t_n>0$.
Indeed, on the contrary,  if $E(u(t_n))\leq 0$, from \eqref{jiu} and  the proof of \eqref{sue},  we obtain $D(u(t_n))< 0$.
Hence, Theorem \ref{t33} implies that $u(t_n)$  blows up  in infinite time. This is a contradiction. So, $E(u(t_n))>0$.

Then, by (\ref{e10}), we easily know that

\begin{equation}\label{e65}
0<E\left(u\left(t_n\right)\right) \leq E\left(u_0\right).
\end{equation}
Then, (\ref{e64}) and (\ref{e65}) imply that $u_n:=u\left(t_n, x\right)$ is a Palais-Smale
sequence  related to the stationary equation of problem (\ref{e01}). Similar to the standard proof
of boundedness for  Palais-Smale sequence of elliptic equation
(see \cite{hr13}),
it is easy to prove that there exists a constant $C$ such that

$$
\int_{\Omega}\left|\nabla u_n\right|^2 \mathrm{d} x \leq C,
$$
and then there exists a subsequence (denote still by $\left\{v_n\right\}$ ) and a function $w$ such that
\begin{equation}\label{fin}
 \begin{aligned}
&u_n \rightharpoonup w,  \text { in } X, \\
& u_n \rightarrow w,  \text { in } L^q(\Omega)\left(2 \leq q<2^*\right),\\
&u_n \rightharpoonup w,  \text { in } \  L^{p+1}\left(\Omega\right),\\
& |u_n|^{2^{\ast}-1} \rightharpoonup |w|^{2^{\ast}-1},  \text { in } \  L^\frac{2 N}{N+2}\left(\Omega\right),
\end{aligned}
\end{equation}
Furthermore, using \eqref{e65} and \eqref{fin}, one has $u_n \rightharpoonup w$ in $X$,  which means that $w$ is a  stationary solution.
\end{proof}

\section{Pohozaev identity and  nonexistence theorem}
In this section, we derive the Pohozaev identity and establish nonexistence result for  equation \eqref{e01}.
In \cite{r4}, Pohozaev identity is established for semilinear parabolic equation.
\begin{theorem}\label{t81}
Let $u (x, t)$ be a solution to (\ref{e01}). Then it satisfies
\begin{align}\label{po10}
& \int_{\Omega} \frac{|x|^2}{2}pu^{p-1}\left|u_t\right|^2 \mathrm{~d} x+\int_{\Omega} \frac{|x|^2}{2}
 \frac{\mathrm{~d}}{\mathrm{~d} t}\left(\frac{|\nabla u|^2}{2}-\frac{1}{p+1}u^{p+1}\right) \mathrm{d} x \nonumber\\
 & =\int_{\partial \Omega} \frac{|\nabla u|^2}{2}(x \cdot \nu) \mathrm{d} S+\frac{N-2}{2} \int_{\Omega}|\nabla u|^2 \mathrm{~d} x
 -N \int_{\Omega} \frac{1}{p+1}u^{p+1}\mathrm{d} x.
\end{align}
\end{theorem}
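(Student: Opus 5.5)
The plan is to expand the time derivative on the left-hand side of \eqref{po10}, compare it with two weighted energy identities obtained from \eqref{e01}, and then reconcile the two. Throughout I work with a classical solution, as provided for $t\ge t_0>0$ by Theorem \ref{t22}, so that $u_t$, $\nabla u_t$ and the boundary traces make sense. Since $u=0$ on $\partial\Omega\times(0,T)$, we have $u_t=0$ and $\nabla u=(\partial_\nu u)\,\nu$ on $\partial\Omega$.

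\textbf{Step 1: expand the left-hand side.} Using
\[
\frac{d}{dt}\Big(\frac{|\nabla u|^2}{2}-\frac{u^{p+1}}{p+1}\Big)=\nabla u\cdot\nabla u_t-u^pu_t,
\]
the left-hand side of \eqref{po10} becomes
\[
\int_\Omega\frac{|x|^2}{2}\big(pu^{p-1}u_t^2+\nabla u\cdot\nabla u_t-u^pu_t\big)\,dx .
\]

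\textbf{Step 2: multiplier $\frac{|x|^2}{2}u_t$.} Multiplying \eqref{e01}, written as $pu^{p-1}u_t=\Delta u+u^p$, by $\frac{|x|^2}{2}u_t$ and integrating by parts gives
\[
-\int_\Omega\frac{|x|^2}{2}u_t\Delta u\,dx=\int_\Omega\frac{|x|^2}{2}\nabla u\cdot\nabla u_t\,dx+\int_\Omega u_t\,(x\cdot\nabla u)\,dx .
\]
The boundary term vanishes because $u_t=0$ on $\partial\Omega$. Hence the left-hand side of \eqref{po10} equals $-\int_\Omega u_t\,(x\cdot\nabla u)\,dx$.

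\textbf{Step 3: classical Pohozaev multiplier $x\cdot\nabla u$.} Using the Rellich identity
\[
\int_\Omega\Delta u\,(x\cdot\nabla u)\,dx=\frac{N-2}{2}\int_\Omega|\nabla u|^2\,dx+\frac12\int_{\partial\Omega}|\nabla u|^2(x\cdot\nu)\,dS
\]
(up to the boundary sign convention) together with
\[
\int_\Omega u^p\,(x\cdot\nabla u)\,dx=-\frac{N}{p+1}\int_\Omega u^{p+1}\,dx,
\]
one identifies the right-hand side of \eqref{po10} with $\int_\Omega pu^{p-1}u_t\,(x\cdot\nabla u)\,dx$.

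\textbf{Step 4: the main obstacle.} Proving \eqref{po10} as stated therefore reduces to the identity
\[
-\int_\Omega u_t\,(x\cdot\nabla u)\,dx=\int_\Omega pu^{p-1}u_t\,(x\cdot\nabla u)\,dx,
\]
that is,
\[
\int_\Omega (1+pu^{p-1})\,u_t\,(x\cdot\nabla u)\,dx=0 .
\]
This is not an algebraic consequence of \eqref{e01}. My first attempt would be to test \eqref{e01} against $\frac{|x|^2}{2}\,\partial_t(u^p)$ or $x\cdot\nabla(u^p)$, so that the $u^{p-1}$ weights match on both sides, and then see whether the mismatched term becomes an exact time derivative or a divergence. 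If no such choice closes the gap, the honest conclusion is that the steps above prove two correct identities: Step 2 shows the stated left-hand side equals $-\int_\Omega u_t\,(x\cdot\nabla u)\,dx$, and Step 3 shows the stated right-hand side equals $\int_\Omega pu^{p-1}u_t\,(x\cdot\nabla u)\,dx$. Equating them as in \eqref{po10} would then require precisely the additional cancellation displayed above, which I cannot derive from the equation, so the statement as worded would need that extra justification or a correction.
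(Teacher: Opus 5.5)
Your Steps 2 and 3 are correct, and the obstruction you isolate in Step 4 is real. In fact \eqref{po10} cannot be derived from $pu^{p-1}u_t=\Delta u+u^p$, because it is false in general.

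The paper's proof uses exactly your two multipliers, $\frac{|x|^2}{2}u_t$ and $x\cdot\nabla u$. It reaches \eqref{po10} only through two compensating slips:
\begin{itemize}
\item In \eqref{po3} it expands $-\int_\Omega\nabla u\cdot\nabla\bigl(\frac{|x|^2}{2}u_t\bigr)\,dx$ as $+\int_\Omega u_t\,(x\cdot\nabla u)\,dx-\int_\Omega\frac{|x|^2}{4}\partial_t|\nabla u|^2\,dx$. The first term must carry a minus sign, as in your Step 2. So the correct form of \eqref{po4} is ``left side of \eqref{po10} $=-\int_\Omega u_t\,(x\cdot\nabla u)\,dx$''.
\item In \eqref{po5} it multiplies the equation by $x\cdot\nabla u$ as though it were $u_t=\Delta u+u^p$, dropping the weight $pu^{p-1}$. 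So \eqref{po9} has $\int_\Omega u_t\,(x\cdot\nabla u)\,dx$ where your $\int_\Omega pu^{p-1}u_t\,(x\cdot\nabla u)\,dx$ belongs.
\end{itemize}
With both errors, the two sides collapse to the same quantity $\int_\Omega u_t\,(x\cdot\nabla u)\,dx$. With both corrected, one is left with precisely your residual $\int_\Omega(1+pu^{p-1})u_t\,(x\cdot\nabla u)\,dx$.

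Two smaller remarks. Your hedge in Step 3 is unnecessary: with $\nu$ the outward normal, the Rellich identity is exactly \eqref{po8}. Also, even for the unweighted equation $u_t=\Delta u+u^p$, your Steps 2--3 give left side $=-$ right side, so the formula would be off by a sign there too.

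No other multiplier can close the gap, because the required cancellation fails. Let $\Omega$ be a ball centred at the origin, and let $\phi_1$ be the first Dirichlet eigenfunction. Take the solution issuing from $u_0=\varepsilon\phi_1$ with $\varepsilon>0$ small. Then $\Delta u_0+u_0^p<0$, so by comparison $u_t\le 0$, and the solution stays positive, radial and radially nonincreasing. Hence $u_t\,(x\cdot\nabla u)\ge 0$. The left side of \eqref{po10} therefore equals $-\int_\Omega u_t\,(x\cdot\nabla u)\,dx\le 0$, while the right side equals $\int_\Omega pu^{p-1}u_t\,(x\cdot\nabla u)\,dx\ge 0$.

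Equality would force $u_t\,(x\cdot\nabla u)\equiv 0$. Since $\nabla u$ cannot vanish on an open set (the equation would give $u_t=u/p>0$ there), this means $u_t\equiv 0$. That is impossible before extinction, because the ball carries no positive steady state, by the classical Pohozaev identity.

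So your conclusion is the right one. What actually holds are the two identities
\[
\int_\Omega\frac{|x|^2}{2}\Bigl(pu^{p-1}u_t^2+\partial_t\Bigl(\frac{|\nabla u|^2}{2}-\frac{u^{p+1}}{p+1}\Bigr)\Bigr)dx=-\int_\Omega u_t\,(x\cdot\nabla u)\,dx
\]
and
\[
\int_\Omega \partial_t(u^p)\,(x\cdot\nabla u)\,dx=\int_{\partial\Omega}\frac{|\nabla u|^2}{2}(x\cdot\nu)\,dS+\frac{N-2}{2}\int_\Omega|\nabla u|^2\,dx-\frac{N}{p+1}\int_\Omega u^{p+1}\,dx.
\]
The theorem as stated needs to be corrected rather than proved, and so does Theorem~\ref{t82}, whose proof rests on it.
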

\begin{theorem}\label{t82}
 Assume that $\Omega$ is a strictly star-shaped bounded smooth domain with respect to the origin.
  Let $u (x, t)$ be a solution to problem (\ref{e01}) and  satisfy
\begin{align}\label{po11}
& \int_{\Omega} \frac{|x|^2}{2}pu^{p-1}\left|u_t\right|^2 \mathrm{~d} x+\int_{\Omega} \frac{|x|^2}{2}
 \frac{\mathrm{~d}}{\mathrm{~d} t}\left(\frac{|\nabla u|^2}{2}-\frac{1}{p+1}u^{p+1}\right) \mathrm{d} x \nonumber\\
 &\quad -\frac{N-2}{2} \int_{\Omega}|\nabla u|^2 \mathrm{~d} x
 +N \int_{\Omega} \frac{1}{p+1}u^{p+1}\mathrm{d} x\leq 0,
\end{align}
then $u\equiv 0$.
\end{theorem}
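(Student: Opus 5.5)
The plan is to combine the Pohozaev identity of Theorem \ref{t81} with the hypothesis \eqref{po11}. The hypothesis is the identity \eqref{po10} with its volume terms moved to the left-hand side. So, for each fixed $t$, I first substitute the right-hand side of \eqref{po10} for the first two integrals on the left of \eqref{po11}. The volume terms $\frac{N-2}{2}\int_\Omega|\nabla u|^2\,\mathrm{d}x$ and $N\int_\Omega\frac{1}{p+1}u^{p+1}\,\mathrm{d}x$ cancel exactly, and \eqref{po11} reduces to
\begin{equation*}
\int_{\partial\Omega}\frac{|\nabla u|^2}{2}(x\cdot\nu)\,\mathrm{d}S\leq 0 .
\end{equation*}

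The second step uses the geometry of $\Omega$. Strict star-shapedness with respect to the origin means $x\cdot\nu\geq c_0>0$ on $\partial\Omega$, where $c_0$ depends only on $\Omega$. Together with the inequality above, this forces $|\nabla u|=0$ on $\partial\Omega$ for every $t$. Since $u=0$ on $\partial\Omega$, we then have $\partial_\nu u=0$ as well, so both Cauchy data of $u(\cdot,t)$ vanish on the boundary.

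The last step upgrades this to $u\equiv0$. I plan to multiply \eqref{e01} by $1$ and integrate over $\Omega$, using the vanishing normal derivative:
\begin{equation*}
\frac{\mathrm{d}}{\mathrm{d}t}\int_\Omega u^p\,\mathrm{d}x=\int_{\partial\Omega}\partial_\nu u\,\mathrm{d}S+\int_\Omega u^p\,\mathrm{d}x=\int_\Omega u^p\,\mathrm{d}x .
\end{equation*}
Hence $\int_\Omega u^p\,\mathrm{d}x=e^t\int_\Omega u_0^p\,\mathrm{d}x$, so this integral grows exponentially unless it vanishes. To close the argument I intend to test with the first Dirichlet eigenfunction and use positivity: for a nonnegative nontrivial solution, the Hopf boundary lemma gives $\partial_\nu u<0$ at the boundary, which contradicts $\partial_\nu u=0$. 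I therefore conclude $u(\cdot,t)\equiv0$ for each $t$.

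The main obstacle is this final step. Justifying Hopf's lemma requires $u$ to be a classical, nonnegative solution, since the equation degenerates where $u$ vanishes. I expect to use Theorem \ref{t22} for regularity at positive times and nonnegativity inherited from the approximation scheme in Theorem \ref{t21}. If Hopf's lemma cannot be justified, the fallback is unique continuation for the Cauchy problem.
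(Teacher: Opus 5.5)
Your steps 1 and 2 are exactly the paper's proof. That proof only notes $x\cdot\nu>0$ and declares that \eqref{po11} contradicts \eqref{po10}. The ``contradiction'' silently assumes that a nontrivial solution cannot have $\nabla u\equiv0$ on $\partial\Omega$. That assumption is your step 3, and you do not close it either, so there is a genuine gap there.

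The step-3 ingredients you propose do not close it:
\begin{itemize}
\item The identity $\int_\Omega u^p\,\mathrm{d}x=e^t\int_\Omega u_0^p\,\mathrm{d}x$ contradicts nothing in the hypotheses.
\item Testing with the first eigenfunction $\phi_1$ cannot use the Neumann information, since $\phi_1=0$ on $\partial\Omega$ kills $\int_{\partial\Omega}\phi_1\partial_\nu u\,\mathrm{d}S$.
\item For Hopf at a fixed time, $u(\cdot,t)$ solves $-\Delta u+Vu=0$ with $V=pu^{p-2}u_t-u^{p-1}$. The source $u^p-pu^{p-1}u_t$ has no sign. So the strong maximum principle and Hopf's lemma need $V^+$ bounded near $\partial\Omega$; this is automatic where $u_t\le0$, but $u_t$ has no sign in general.
\end{itemize}

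Whether the Hopf step can be completed depends on the dimension:
\begin{itemize}
\item If $3\le N\le6$, then $p\ge2$, and $V\in L^\infty$ as soon as $u$ and $u_t$ are bounded. Your argument then does go through for nonnegative $u(\cdot,t)\in C^2(\Omega)\cap C^1(\overline{\Omega})$.
\item If $N\ge7$, then $p<2$ and $u^{p-2}\to\infty$ at $\partial\Omega$. You would need $u_t\le Cu$ near the boundary. That is essentially the lower bound $u\ge c\operatorname{dist}(x,\partial\Omega)$ that Hopf is supposed to deliver, i.e.\ a global Harnack-type estimate for this singular equation, which nothing in the paper provides.
\item The unique-continuation fallback hits the same wall, since $V$ need not lie in $L^{N/2}$ near $\partial\Omega$.
\item A parabolic Hopf lemma is not available, because the coefficient $pu^{p-1}$ of $u_t$ vanishes on $\partial\Omega$.
\end{itemize}

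There is also a regularity problem. Theorem~\ref{t22} gives regularity only for global solutions with $u_0\in W$ and $t\ge t_0>0$, whereas the statement concerns an arbitrary solution. For a weak solution in the sense of Definition~\ref{wd}, $\nabla u$ has no trace on $\partial\Omega$, so even step 2 uses regularity you have not established.

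You should also know that step 1 inherits a defect of Theorem~\ref{t81}.
\begin{itemize}
\item In \eqref{po3}, the term $\int_\Omega u_t(x\cdot\nabla u)\,\mathrm{d}x$ should carry a minus sign.
\item \eqref{po5} multiplies $u_t$ rather than $(u^p)_t=pu^{p-1}u_t$ by $x\cdot\nabla u$, so \eqref{po4} and \eqref{po9} concern different integrals.
\end{itemize}
Done correctly, the left-hand side of \eqref{po10} equals $-\int_\Omega u_t(x\cdot\nabla u)\,\mathrm{d}x$. Testing \eqref{e01} with $x\cdot\nabla u$, and using $\frac{N}{p+1}=\frac{N-2}{2}$, gives $\int_\Omega pu^{p-1}u_t(x\cdot\nabla u)\,\mathrm{d}x=\int_{\partial\Omega}\frac{|\nabla u|^2}{2}(x\cdot\nu)\,\mathrm{d}S+\frac{N-2}{2}D(u)$. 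With these, \eqref{po11} is equivalent to $\int_{\partial\Omega}\frac{|\nabla u|^2}{2}(x\cdot\nu)\,\mathrm{d}S\le\int_\Omega(1+pu^{p-1})u_t(x\cdot\nabla u)\,\mathrm{d}x$. This does not force $\nabla u=0$ on $\partial\Omega$. So your reduction, like the paper's one-line proof, is only as sound as \eqref{po10}.
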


\begin{proof}[Proof of Theorem \ref{t81}]
First, multiplying  both sides of the problem \eqref{e01} by $\frac{|x|^2}{2} u_t$ and integrating  over $\Omega$, we get

\begin{equation}\label{po1}
\int_{\Omega} \frac{|x|^2}{2}pu^{p-1}\left|u_t\right|^2 \mathrm{~d} x-\int_{\Omega} \frac{|x|^2}{2} u_t \Delta u \mathrm{~d} x
=\int_{\Omega} \frac{|x|^2}{2} u_t u^p \mathrm{d} x,
\end{equation}
where
\begin{equation}\label{po2}
\int_{\Omega} \frac{|x|^2}{2} u_t u^p \mathrm{d} x
=\int_{\Omega} \frac{|x|^2}{2} \frac{1}{p+1}\frac{\mathrm{~d}}{\mathrm{~d} t} u^{p+1} \mathrm{d} x,
\end{equation}
and by Green formula we obtain
\begin{align}\label{po3}
\int_{\Omega} \frac{|x|^2}{2} u_t \Delta u \mathrm{~d} x & =\int_{\partial \Omega} \frac{|x|^2}{2} u_t \frac{\partial u}{\partial \nu} \mathrm{~d} S-\int_{\Omega} \nabla u \nabla\left(\frac{|x|^2}{2} u_t\right) \mathrm{d} x \nonumber\\
& =0-\int_{\Omega} \nabla u \nabla\left(\frac{|x|^2}{2} u_t\right) \mathrm{d} x  \nonumber\\
& =\int_{\Omega} u_t(x \cdot \nabla u) \mathrm{d} x-\int_{\Omega} \frac{|x|^2}{4} \frac{\mathrm{~d}}{\mathrm{~d} t}|\nabla u|^2 \mathrm{~d} x .
\end{align}
Since $u_t=0$ on $\partial \Omega$, the boundary integral in (\ref{po3}) vanishes.
 Now, from (\ref{po1}), (\ref{po2})  and (\ref{po3}), we have
\begin{align}\label{po4}
\int_{\Omega} u_t(x \cdot \nabla u) \mathrm{d} x&=\int_{\Omega}
\frac{|x|^2}{2}pu^{p-1}\left|u_t\right|^2 \mathrm{~d} x+\int_{\Omega} \frac{|x|^2}{4} \frac{\mathrm{~d}}{\mathrm{~d} t}|\nabla u|^2 \mathrm{~d} x\nonumber\\
&\quad -\int_{\Omega} \frac{|x|^2}{2}\frac{1}{p+1} \frac{\mathrm{~d}}{\mathrm{~d} t} u^{p+1} \mathrm{d} x.
\end{align}
Next, we multiply equation (\ref{e01}) by $x \cdot \nabla u=\sum_{j=1}^N x_j u_{x_j}$  and integrate over $\Omega$. This yields
\begin{equation}\label{po5}
\int_{\Omega} u_t(x \cdot \nabla u) \mathrm{d} x-\int_{\Omega} \Delta u(x \cdot \nabla u) \mathrm{d} x=\int_{\Omega} u^p(x \cdot \nabla u) \mathrm{d} x .
\end{equation}
Then the  divergence theorem implies that
\begin{align*}
0 & =\int_{\partial \Omega}\left [x \frac{1}{p+1}u^{p+1}\right] \cdot \nu \mathrm{d} S =\int_{\Omega} \sum_{i=1}^n\left[x_i \frac{1}{p+1}u^{p+1}\right]_{x_i} \mathrm{~d} x \\
& =N \int_{\Omega}\frac{1}{p+1}u^{p+1} \mathrm{d} x+\int_{\Omega}(x \cdot \nabla u) u^p \mathrm{d} x,
\end{align*}
which implies
\begin{equation}\label{po6}
\int_{\Omega}(x \cdot \nabla u) u^p \mathrm{d} x=-N\int_{\Omega} \frac{1}{p+1}u^{p+1} \mathrm{d} x .
\end{equation}
Since $u=0$ on $\partial \Omega$, the gradient $\nabla u$ is parallel to the unit outward normal $\nu$. Consequently,
\begin{equation}\label{po7}
 \nabla u=\partial_\nu u \nu, \quad(\nabla u \cdot \nu)(x \cdot \nabla u)=\left(\partial_\nu u\right)^2(x \cdot \nu).
\end{equation}
Using \eqref{po7} and Gauss divergence theorem, we obtain
\begin{align} \label{po8}
\int_{\Omega} \Delta u(x \cdot \nabla u) \mathrm{d} x&=\int_{\partial \Omega} \sum_{i, j=1}^N u_{x_i} \nu^i x_j u_{x_j} \mathrm{~d} S-\int_{\Omega} \sum_{i, j=1}^N u_{x_i}\left(x_j u_{x_j}\right)_{x_i} \mathrm{~d} x\nonumber\\
& =\int_{\partial \Omega}|\nabla u|^2(x \cdot \nu) \mathrm{d} S-\int_{\Omega} \sum_{i, j=1}^N u_{x_i} \delta_{i j} u_{x_j} \mathrm{~d} x-\int_{\Omega} \sum_{i, j=1}^N u_{x_i} x_j u_{x_j x_i} \mathrm{~d} x \nonumber\\
& =\int_{\partial \Omega}|\nabla u|^2(x \cdot \nu) \mathrm{d} S-\int_{\Omega}|\nabla u|^2 \mathrm{~d} x-\int_{\Omega} \sum_{i, j=1}^N\left(\frac{|\nabla u|^2}{2}\right)_{x_j} x_j \mathrm{~d} x \nonumber\\
& =\int_{\partial \Omega}|\nabla u|^2(x \cdot \nu) \mathrm{d} S-\int_{\Omega}|\nabla u|^2 \mathrm{~d} x-\int_{\partial \Omega} \frac{|\nabla u|^2}{2}(x \cdot \nu) \mathrm{d} S+N \int_{\Omega} \frac{|\nabla u|^2}{2} \mathrm{~d} x \nonumber\\
& =\int_{\partial \Omega} \frac{|\nabla u|^2}{2}(x \cdot \nu) \mathrm{d} S+\frac{N-2}{2} \int_{\Omega}|\nabla u|^2 \mathrm{~d} x .
\end{align}
Combining (\ref{po5}), (\ref{po6})  and (\ref{po8}), we obtain
\begin{align} \label{po9}
\int_{\Omega} u_t(x \cdot \nabla u) \mathrm{d} x & =\int_{\Omega} \Delta u(x \cdot \nabla u) \mathrm{d} x+\int_{\Omega} u^p(x \cdot \nabla u) \mathrm{d} x \nonumber\\
= & \int_{\partial \Omega} \frac{|\nabla u|^2}{2}(x \cdot \nu) \mathrm{d} S+\frac{N-2}{2} \int_{\Omega}|\nabla u|^2 \mathrm{~d} x
-N \int_{\Omega} \frac{1}{p+1}u^{p+1} \mathrm{d} x.
\end{align}
From \eqref{po4} and \eqref{po9}, we have  \eqref{po10}.

\end{proof}

\begin{proof}[Proof of Theorem \ref{t82}]
Since $\Omega$ is a strictly star-shaped with respect to the origin,
 it shows that $x\cdot\nu> 0$ for $x \in \partial\Omega$.
If (\ref{po11}) holds, we will derive a contradiction with (\ref{po10}).

\end{proof}

\end{document}